\theoremstyle{plain}
\newtheorem{theorem}{Theorem}[section]
\newtheorem{proposition}[theorem]{Proposition}
\newtheorem{corollary}[theorem]{Corollary}
\newtheorem{lemma}[theorem]{Lemma}
\theoremstyle{remark}
\newtheorem{remark}[theorem]{Remark}
\newtheorem{example}[theorem]{Example}
\theoremstyle{definition}
\newtheorem{definition}[theorem]{Definition}
\newtheorem{notation}[theorem]{Notation}
\newcommand{\F}{\mathbb{F}}
\newcommand{\N}{\mathbb{N}}
\newcommand{\Z}{\mathbb{Z}}
\newcommand{\cat}[1]{\mathbf{\mathcal{#1}}} 
\newcommand{\ind}[1]{\widetilde{#1}} 
\newcommand{\ord}[1]{\mathbf{\underline{#1}}}
\newcommand{\al}{\alpha}
\newcommand{\De}{\Delta}
\newcommand{\ep}{\epsilon}
\newcommand{\io}{\iota}
\newcommand{\ka}{\kappa}
\newcommand{\Om}{\Omega}
\newcommand{\si}{\sigma}
\newcommand{\Si}{\Sigma}
\newcommand{\rla}{\rightleftarrows}
\newcommand{\rra}{\rightrightarrows}
\newcommand{\ral}{\xrightarrow} 
\newcommand{\Ra}{\Rightarrow}
\newcommand{\La}{\Leftarrow}
\newcommand{\inj}{\hookrightarrow}
\newcommand{\surj}{\twoheadrightarrow}
\newcommand{\bu}{\bullet}
\newcommand{\tild}{\widetilde}
\newcommand{\dfn}{\coloneqq}
\newcommand{\lan}{\left\langle}
\newcommand{\lt}{\ltimes} 
\newcommand{\op}{\oplus}
\newcommand{\ot}{\otimes}
\newcommand{\push}{\sharp}
\newcommand{\ran}{\right\rangle}
\newcommand{\We}{\bigvee}
\newcommand{\x}{\times}
\newcommand{\Ab}{\mathbf{Ab}}
\newcommand{\Alg}{\mathbf{Alg}}
\newcommand{\Bimod}[1]{#1\text{-}\mathbf{Bimod}}
\newcommand{\Cat}{\mathbf{Cat}}
\newcommand{\Com}{\mathbf{Com}}
\newcommand{\Gp}{\mathbf{Gp}}
\newcommand{\GrSet}{\mathbf{GrSet}}
\newcommand{\Ho}{\mathbf{Ho}}
\newcommand{\Hy}{H}
\newcommand{\LL}{\mathbf{L}}
\newcommand{\Mod}{\mathbf{Mod}}
\newcommand{\Modd}[1]{#1\text{-}\mathbf{Mod}}
\newcommand{\Model}{\mathbf{Model}}
\newcommand{\PI}{\mathbf{\Pi}}
\newcommand{\PiAlg}{\mathbf{\Pi Alg}}
\newcommand{\Poset}{\mathbf{Poset}}
\newcommand{\redCom}{\mathbf{Com}^{\mathrm{red}}}
\newcommand{\Set}{\mathbf{Set}}
\newcommand{\tfAb}{\mathbf{Ab}^{\mathrm{tf}}}
\newcommand{\Def}{\textbf}
\DeclareMathOperator*{\colim}{colim}
\DeclareMathOperator{\Ext}{Ext}
\DeclareMathOperator{\Fun}{Fun}
\DeclareMathOperator{\HA}{HA}
\DeclareMathOperator{\HH}{HH}
\DeclareMathOperator{\Hom}{Hom}
\DeclareMathOperator{\HOM}{\underline{Hom}}
\DeclareMathOperator{\HQ}{HQ}
\DeclareMathOperator{\Nil}{Nil}
\DeclareMathOperator{\Src}{Src}
\DeclareMathOperator{\Tor}{Tor}
\DeclareMathOperator{\Triv}{Triv}
\newcommand{\ab}{\mathrm{ab}}
\newcommand{\id}{\mathrm{id}}
\newcommand{\opp}{\mathrm{op}}
\begin{document}

\title{Behavior of Quillen (co)homology with respect to adjunctions} 
\date{\today}


\author{Martin Frankland}             
\email{mfrankla@uwo.ca}
\address{Department of Mathematics,
         University of Western Ontario,
         Middlesex College,
         London, ON, N6A 5B7,
         Canada}

\thanks{Supported in part by an NSERC Postgraduate Scholarship and an FQRNT Doctoral Research Scholarship.}

\thanks{Most of this paper is part of my doctoral work at MIT under the supervision of Haynes Miller, whom I thank heartily for all his support. I also thank Michael Barr, Jacob Lurie, David Blanc, and Dan Christensen for fruitful conversations, as well as the referee for many helpful comments.}




\subjclass[2010]{Primary: 18G55; Secondary: 55U35, 18G30, 13D03.}

\keywords{Quillen, homology, cohomology, adjunction, simplicial, model structure, comparison.}

\begin{abstract}
This paper aims to answer the following question: Given an adjunction between two categories, how is Quillen (co)homology in one category related to that in the other? We identify the induced comparison diagram, giving necessary and sufficient conditions for it to arise, and describe the various comparison maps. Examples are given. Along the way, we clarify some categorical assumptions underlying Quillen (co)homology: cocomplete categories with a set of small projective generators provide a convenient setup.
\end{abstract}

\maketitle

\section{Introduction}

\subsection{Motivation and goals}

Quillen \cite{Quillen67}*{\S II.5} introduced a notion of cohomology that makes use of homotopical algebra and simplicial methods to take derived functors in a non-abelian context, generalizing the derived functors of homological algebra. One of the goals was to solve problems in algebra using methods from homotopy theory, although Quillen cohomology later found many applications to homotopy theory and topology \cite{Goerss07}*{Remark 4.35}.

Quillen cohomology works in a broad context which includes many interesting categories. The case of commutative algebras, the celebrated Andr\'e-Quillen cohomology \cite{Quillen70}*{\S 4} \cite{Andre74} \cite{Goerss07}*{\S 4.4}, was one of the first examples studied. The analogue for associative algebras \cite{Quillen70}*{\S 3} is related to another well studied theory, namely Hochschild cohomology. Quillen exhibited relations between the two \cite{Quillen70}*{\S 8}, which can be useful when cohomology is easier to compute in one category or the other.

This paper investigates the question of relating Quillen (co)homology in different categories, more specifically when two categories are related by an adjunction. Our motivating example was to compute some Quillen cohomology groups of truncated $\Pi$-algebras controlling the obtructions to realization \cite{Blanc04}, which is done in Section \ref{sec:TrunPiAlg}. However, the broader question seems natural, given that adjoint pairs abound in nature.

\subsection{Organization and results}

Section \ref{sec:Kinds} contains some background material. Section \ref{sec:SetupHQ} clarifies the categorical assumptions underlying Quillen cohomology. It consists mostly of category theory, for the purposes of homotopical algebra. The main clarifications are Propositions \ref{AbExist} and \ref{GoodSetupHQ}. Examples \ref{NonAbelBeck} and \ref{NonExact} clarify conditions related to Beck modules being abelian.

Section \ref{sec:Effect} is the heart of the paper, describing the effect of an adjunction on Quillen (co)homology. We first describe the comparison diagram consisting of Quillen pairs, and work out various comparison maps from it. The main result is Theorem \ref{ComparDiag}, from which \ref{EffectQuilHomolC} and \ref{EffectQuilHomolD} follow.

Section \ref{sec:Examples} studies examples of adjunctions where the right adjoint is the inclusion of a regular-epireflective full subcategory. In other words, the right adjoint forgets certain conditions satisfied by the objects, and the left adjoint is the quotient that freely imposes the conditions. The main results are \ref{PnCotCpx} and \ref{HQtrunc2}.

\subsection{Notations, conventions, and terminology}

\subsubsection*{Simplicial objects}

\begin{notation}
Let $\De$ denote the simplicial indexing category, whose objects are the finite ordinals $\ord{n} = \{ 0, 1, \ldots, n \}$, for $n \geq 0$, and maps $\phi \colon \colon \ord{m} \to \ord{n}$ are order-preserving functions.

Given a category $\cat{C}$, denote by $s\cat{C}$ the category of simplicial objects in $\cat{C}$, i.e., the functor category $\Fun(\De^{\opp},\cat{C})$. Denote a simplicial object by $X_{\bu}$, where $X_n \dfn X(\ord{n})$ denotes the object in simplicial degree $n$.

Denote the standard $n$-simplex by $\De^n \dfn \Hom_{\De}(-,\ord{n})$, which is a simplicial set.

We say that a property of a simplicial object $X_{\bu}$ holds \emph{degreewise} if it holds for $X_n$ for all $n \geq 0$, and likewise for maps $f_{\bu} \colon X_{\bu} \to Y_{\bu}$. 

An object $X$ of $\cat{C}$ can be viewed as a constant simplicial object, denoted $X_c$ (or just $X$, by abuse of notation). This defines a fully faithful functor $(-)_c \colon \cat{C} \to s\cat{C}$.
\end{notation}

\subsubsection*{Quillen (co)homology}

\begin{definition}
For an object $X$ of $\cat{C}$, the category of \Def{Beck modules} over $X$ is the category $(\cat{C}/X)_{\ab}$ of abelian group objects in the slice category $\cat{C}/X$. We sometimes use the notation $\Mod_X \dfn (\cat{C}/X)_{\ab}$.
\end{definition}

\begin{definition}
If the forgetful functor $U_X \colon (\cat{C}/X)_{\ab} \to \cat{C}/X$ has a left adjoint $Ab_X \colon \cat{C}/X \to (\cat{C}/X)_{\ab}$, the latter is called \Def{abelianization} over $X$.
\end{definition}

\begin{definition}
For a map $f \colon X \to Y$ in $\cat{C}$, the \Def{direct image} functor $f_! \colon \cat{C}/X \to \cat{C}/Y$ is postcomposition by $f$, which is left adjoint to the \Def{pullback} functor $f^* \colon \cat{C}/Y \to \cat{C}/X$. Since $f^*$ preserves limits, it induces a functor $f^* \colon (\cat{C}/Y)_{\ab} \to (\cat{C}/X)_{\ab}$ also called \Def{pullback}. The \Def{pushforward} along $f$ is the left adjoint $f_{\push} \colon (\cat{C}/X)_{\ab} \to (\cat{C}/Y)_{\ab}$ of $f^*$, if it exists.
\end{definition}

\begin{definition}
The \Def{cotangent complex} $\LL_X$ of $X$ is the derived abelianization of $X$, i.e., the simplicial module over $X$ given by $\LL_X \dfn Ab_X(C_{\bu} \to X)$, where $C_{\bu} \to X$ is a cofibrant replacement of $X$ in $s\cat{C}$.
\end{definition}

\begin{definition}
The $n^{\text{th}}$ \Def{Quillen homology} group of $X$ is the $n^{\text{th}}$ derived functor of abelianization, given by $\HQ_n(X) \dfn \pi_n(\LL_X)$. If the category $\Mod_X$ has a good notion of tensor product $\ot$, then Quillen homology with coefficients in a module $M$ over $X$ is $\HQ_n(X;M) \dfn \pi_n(\LL_X \ot M)$.
\end{definition}

\begin{definition}
The $n^{\text{th}}$ \Def{Quillen cohomology} group of $X$ with coefficients in a module $M$ is the $n^{\text{th}}$ (simplicially) derived functor of derivations, given by $\HQ^n(X;M) \dfn \pi^n \Hom(\LL_X,M)$.
\end{definition}

\begin{definition}
The $n^{\text{th}}$ \Def{abelian cohomology} group of $X$ with coefficients in a module $M$ is the $n^{\text{th}}$ derived functor of derivations in the sense of homological algebra, given by $\HA^n(X;M) \dfn \Ext^n(Ab_X X, M)$. The $n^{\text{th}}$ \Def{abelian homology} group of $X$ with coefficients in $M$ is $\HA_n(X;M) \dfn \Tor_n(Ab_X X, M)$. They can be viewed as abelian approximations of Quillen (co)homology, with comparison maps $\HA^n(X;M) \to \HQ^n(X;M)$ and $\HQ_n(X;M) \to \HA_n(X;M)$.
\end{definition}

\subsubsection*{Category theory}

We follow mostly \cite{Borceux94v1}*{Chapter 4}, \cite{Borceux94v2}*{Chapter 2}, \cite{Quillen67}*{\S II.4}, \cite{Barr02}*{Chapter 1}, and \cite{Adamek94}*{Chapter 1}. All categories will be assumed locally small, i.e., $\Hom_{\cat{C}}(X,Y)$ forms a set for any two objects $X$ and $Y$ of the category $\cat{C}$. We will not distinguish between the notions of small category and essentially small category, i.e., one that is equivalent to a small category. The term ``finite products'' will always include the nullary product, i.e., the terminal object; likewise for finite coproducts.

\begin{definition}
The \Def{kernel pair} of a map $f \colon X \to Y$ is the pullback of $f$ along itself:
\[
\xymatrix{
X \x_Y X \ar[d] \ar[r] & X \ar[d]^{f} \\
X \ar[r]_{f} & Y \\
}
\]
equipped with its two projections $X \x_Y X \rra X$.
\end{definition}

\begin{definition}
A map $f \colon X \to Y$ is called a \Def{regular epimorphism} if it is the coequalizer of some parallel pair of maps $W \rra X$. Note that a regular epimorphism is indeed automatically an epimorphism. 
\end{definition}

\begin{remark}
A map $f \colon X \to Y$ is called an \emph{effective epimorphism} if $f$ admits a kernel pair $X \x_Y X \rra X$ and $f$ is the coequalizer of its kernel pair. If a map $f$ is a coequalizer of some parallel pair (i.e., a regular epimorphism) and $f$ admits a kernel pair, then $f$ is the coequalizer of its kernel pair \cite{Borceux94v1}*{Proposition 2.5.7}. Thus, in a category $\cat{C}$ with all kernel pairs, the notions of regular epimorphism and effective epimorphism coincide.
\end{remark}

\begin{definition}
A category $\cat{C}$ is called \Def{regular} if it satisfies the following conditions \cite{Borceux94v2}*{Definition 2.1.1}.
\begin{itemize}
\item Every map $f \colon X \to Y$ has a kernel pair.
\item Every kernel pair $X \x_Y X \rra X$ has a coequalizer.
\item The pullback of a regular epimorphism along any map exists and is again a regular epimorphism.
\end{itemize}
\end{definition}

\begin{remark}
The notion of regular category in \cite{Barr02}*{Chapter 1 \S 8.9} is more restrictive. The first two conditions are strengthened to the existence of all finite limits and of all coequalizers.
\end{remark}

\begin{definition}
An object $X$ of $\cat{C}$ is called \Def{finitely presentable} if the functor \linebreak $\Hom_{\cat{C}}(X,-) \colon \cat{C} \to \Set$ preserves filtered colimits \cite{Adamek94}*{Definition 1.1}.

Given a regular cardinal $\ka$, an object $X$ is called \Def{$\ka$-presentable} if the functor \linebreak $\Hom_{\cat{C}}(X,-) \colon \cat{C} \to \Set$ preserves $\ka$-filtered colimits \cite{Borceux94v2}*{Definition 5.1.1} \cite{Adamek94}*{Definition 1.13}. An object is \Def{presentable} if it is $\ka$-presentable for some regular cardinal $\ka$. Finitely presentable is thus another name for $\aleph_0$-presentable.
\end{definition}

\begin{remark}
This notion of finitely presentable is called \emph{small} in \cite{Quillen67}*{\S II.4}. Let us clarify the distinction between smallness and presentability.

Given a regular cardinal $\ka$, an object $X$ is called \emph{$\ka$-small} if the functor $\Hom_{\cat{C}}(X,-) \colon \cat{C} \to \Set$ preserves $\ka$-directed sequential colimits, where \emph{sequential} means indexed by a well-ordered set. An object is \emph{small} if it is $\ka$-small for some regular cardinal $\ka$. Smallness and various weaker conditions play a role in the small object argument; cf. \cite{Goerss99}*{Definition II.6.1}, \cite{Hovey99}*{Definition 2.1.3}, and \cite{Hirschhorn03}*{Definition 10.4.1}.

By definition, every $\ka$-presentable object is also $\ka$-small. For $\ka = \aleph_0$, the converse holds: an object is $\aleph_0$-presentable if and only if it is $\aleph_0$-small \cite{Adamek94}*{Corollary 1.7}. However, this is no longer true for larger cardinals. For example, \cite{Adamek94}*{Remark 1.21} describes an object which is $\aleph_1$-small but not $\aleph_1$-presentable.
\end{remark}

\begin{definition}
A class $\cat{G}$ of objects of $\cat{C}$ is called a \Def{class of generators} if for every object $X$ of $\cat{C}$, there is a regular epimorphism $\amalg_i G_i \surj X$ from a coproduct of objects $G_i$ in $\cat{G}$. This notion is also called a \emph{regular class of generators} in the literature \cite{Borceux94v1}*{Definition 4.5.3}.
\end{definition}

\begin{definition}
A category $\cat{C}$ is \Def{locally finitely presentable} if it is cocomplete and has a set $\cat{G}$ of finitely presentable objects such that every object of $\cat{C}$ is a filtered colimit of objects from $\cat{G}$ \cite{Adamek94}*{Definition 1.9}.

Given a regular cardinal $\ka$, a category $\cat{C}$ is \Def{locally $\ka$-presentable} if it is cocomplete and has a set $\cat{G}$ of $\ka$-presentable objects such that every object of $\cat{C}$ is a $\ka$-filtered colimit of objects from $\cat{G}$ \cite{Adamek94}*{Definition 1.17} \cite{Borceux94v2}*{Definition 5.2.1}. A category is \Def{locally presentable} if it is locally $\ka$-presentable for some regular cardinal $\ka$. Locally finitely presentable is thus another name for locally $\aleph_0$-presentable.
\end{definition}

\begin{definition} \label{def:Proj}
An object $P$ of $\cat{C}$ is called \Def{projective} if the functor $\Hom_{\cat{C}}(P,-) \colon \cat{C} \to \Set$ preserves regular epimorphisms. More explicitly, recall that (regular) epimorphisms in $\Set$ are the surjections, so that an object $P$ of $\cat{C}$ is projective if and only if maps out of $P$ can be lifted across any regular epimorphism $f \colon X \surj Y$, as illustrated in the diagram:
\begin{equation} \label{eq:ProjLift}
\xymatrix{
& P \ar@{-->}[dl] \ar[d] \\
X \ar@{->>}[r]_{f} & Y. \\
}
\end{equation}
The category $\cat{C}$ \Def{has enough projectives} if for every object $X$ of $\cat{C}$, there exists a regular epimorphism $P \surj X$ from a projective object $P$.
\end{definition}

\begin{remark}
This notion of projective is also called \emph{regular projective} in the literature \cite{Adamek94}*{Remark 3.4 (5)}. We drop the adjective because regular projectives are the only kind of projectives we work with in this paper. Note that this notion is implied by the notion of projective in \cite{Borceux94v1}*{Definition 4.6.1}; the two notions agree if the category $\cat{C}$ is regular \cite{Borceux94v2}*{Proposition 2.1.4}.
\end{remark}

\begin{remark}
In an abelian category $\cat{A}$, every epimorphism is regular, and the notion of (regular) projective is the usual notion of projective from homological algebra.
\end{remark}

\begin{definition}
Let $\cat{P}$ be a class of objects in $\cat{C}$. A map $f \colon X \to Y$ is \Def{$\cat{P}$-epic} if for every object $P$ of $\cat{P}$, the map $f_* \colon \Hom_{\cat{C}}(P,X) \to \Hom_{\cat{C}}(P,Y)$ is surjective \cite{Christensen02}*{\S 1.1}. In other words, maps out of any $P$ in $\cat{P}$ can be lifted across $f \colon X \surj Y$, as in the diagram \eqref{eq:ProjLift}.

In this paper, $\cat{P}$ will be the class of regular projectives unless otherwise noted. In that case, every regular epimorphism is $\cat{P}$-epic by definition, though a $\cat{P}$-epic map need not be a regular epimorphism. Note morever that projectives and $\cat{P}$-epic maps determine each other via the lifting condition.
\end{definition}

\begin{definition}
A \Def{subobject} of an object $X$ in a category $\cat{C}$ is an equivalence class of monomorphisms $Z \inj X$, up to isomorphism over $X$ \cite{Borceux94v1}*{Definition 4.1.1} . The equivalence class of $Z \inj X$ is denoted $[Z \inj X]$.
\end{definition}


\begin{definition}
A \Def{relation} on an object $X$ is a subobject $[R \inj X \x X]$ \cite{Borceux94v2}*{Definition 2.5.1}.

By abuse of notation, we sometimes blur the distinction between a relation $[R \inj X \x X]$ and one of its representative monomorphisms $R \inj X \x X$.

Note that a monomorphism $R \inj X \x X$ is the same as a pair of maps $r_1, r_2 \colon R \rra X$ which are jointly monomorphic. The \emph{coequalizer} of a relation $[R \to X \x X]$ is defined as the coequalizer of such a pair $R \rra X$ for any  representative map $R \inj X \x X$. The coequalizer is independent of the choice of representative.
\end{definition}

\begin{definition}
An \Def{equivalence relation} $[R \inj X \x X]$ on an object $X$ is a relation on $X$ which satisfies reflexivity, symmetry, and transitivity; see \cite{Borceux94v2}*{Proposition 2.5.5} for more details.
\end{definition}

\begin{definition}
An equivalence relation $R \rra X$ on an object $X$ is called \Def{effective} if it is the kernel pair of some map \cite{Borceux94v2}*{Definition 2.5.3} \cite{Barr02}*{Chapter 1 \S 8.11}.
\end{definition}

\begin{definition}
An \Def{exact category} (in the sense of Barr) is a regular category in which all equivalence relations are effective \cite{Borceux94v2}*{Definition 2.6.1} \cite{Barr02}*{Chapter 1 \S 8.11}.
\end{definition}

\section{What kinds of categories?} \label{sec:Kinds}

In the classic \cite{Quillen67}*{\S II.4}, Quillen introduces a standard simplicial model structure on the category $s\cat{C}$ of simplicial objects in $\cat{C}$, assuming $\cat{C}$ is nice enough. In this section, we recall some conditions on $\cat{C}$ that make this construction work, and we describe the kinds of categories we will be working with.

\subsection{Standard model structure on simplicial objects}

\begin{definition} \label{NiceSplObj}
A complete and cocomplete category $\cat{C}$ \Def{has nice simplicial objects} if the following notions define a (closed) model structure on $s\cat{C}$. A map $f_{\bu} \colon X_{\bu} \to Y_{\bu}$ in $s\cat{C}$ is a:
\begin{itemize}
\item \emph{fibration} (resp. \emph{weak equivalence}) if for every projective object $P$ of $\cat{C}$, the map:
\[
\xymatrix{
\Hom_{\cat{C}}(P,X_{\bu}) \ar[r]^{f_*} & \Hom_{\cat{C}}(P,Y_{\bu})
}
\]
is a fibration (resp. weak equivalence) of simplicial sets.
\item \emph{cofibration} if it has the left lifting property with respect to all trivial fibrations.
\end{itemize}
\end{definition}

\begin{remark}
Quillen's original construction only assumed finite limits and colimits. As explained in \cite{Hovey99}*{\S 1.1}, by assuming the existence of all small limits and colimits, one loses little generality and gains much convenience. 
\end{remark}

\begin{definition} \label{QuasiAlgCat}
A category is \Def{quasi-algebraic} if it is cocomplete and has a set of finitely presentable projective generators.

This notion is called a \emph{multi-sorted quasi-algebraic category} in \cite{Pedicchio04}*{Chapter VI, \S 3.1, 4.2}, where the term \emph{quasi-algebraic category} is reserved for the case where said set of generators consists of a single object. 
\end{definition}

In \cite{Quillen67}*{\S II.4, Theorem 4}, Quillen shows that quasi-algebraic categories have nice simplicial objects. In \cite{Quillen70}*{\S 2}, he proposes the word ``algebraic'' for categories as in Definition \ref{QuasiAlgCat} and then provides examples from Lawvere's work, in which the categories are assumed to be exact. We reserve the word ``algebraic'' for the more restrictive sense in Definition \ref{def:AlgCat}, for reasons which will be explained below.

Quasi-algebraic categories have excellent properties. They are locally finitely presentable (by \cite{Adamek94}*{Theorem 1.11}) and in particular complete (by \cite{Adamek94}*{Corollary 1.28}), and they are regular (by \ref{EnoughProjReg}).

\begin{remark} \label{PosetNotQuasi}
Not every locally finitely presentable category is quasi-algebraic. For example, the category $\Poset$ of partially ordered sets is locally finitely presentable, by \cite{Adamek94}*{Example 1.10 (1)}, but it is not quasi-algebraic, by \cite{Adamek94}*{Remark 3.21 (1)}. Indeed, the only projective posets are the discrete ones. The regular epimorphisms in $\Poset$ are the surjective maps $f \colon X \to Y$ such that the image $f(X)$ generates $Y$ under composition (viewing posets as categories). The poset with two comparable elements $Y = \{ y_0 < y_1 \}$ admits no regular epimorphism $P \to Y$ from a projective $P$, hence $\Poset$ does not have enough projectives.

Note that the $\cat{P}$-epic maps in $\Poset$ are those that are regular epimorphisms of underlying sets, namely the surjective maps.
\end{remark}

\begin{definition} \label{def:AlgTh}
An \Def{algebraic theory} is a small category $\cat{T}$ with finite products \cite{Adamek11}*{Definition 1.1}. A \Def{model} for the theory $\cat{T}$ is a functor $M \colon \cat{T} \to \Set$ that preserves finite products. Morphisms between models for $\cat{T}$ are natural transformations. The category $\Model(\cat{T})$ of models for $\cat{T}$ is a full subcategory of the functor category $\Set^{\cat{T}}$.

More generally, given a category $\cat{C}$, a \Def{$\cat{C}$-valued model} for $\cat{T}$ is a functor $M \colon \cat{T} \to \cat{C}$ that preserves finite products. The category $\Model(\cat{T};\cat{C})$ of $\cat{C}$-valued models for $\cat{T}$ is a full subcategory of the functor category $\cat{C}^{\cat{T}}$.

An algebraic theory $\cat{T}$ is \Def{one-sorted} if there is an object $T_1$ of $\cat{T}$ such that every object $T$ is a finite product $T \cong T_1^{n}$ for some $n \geq 0$. In that case, there is a forgetful functor \linebreak $U \colon \Model(\cat{T};\cat{C}) \to \cat{C}$, which evaluates a model $M \colon \cat{T} \to \cat{C}$ at the object $T_1$. This functor $U$ is well-defined up to natural isomorphism, namely up to the choice of the object $T_1$; cf. \cite{Borceux94v2}*{Proposition 3.3.3}.
\end{definition}

\begin{definition} \label{def:AlgCat}
A category is \Def{algebraic} if it is equivalent to the category $\Model(\cat{T})$ of models for some algebraic theory $\cat{T}$ \cite{Adamek11}*{Definition 1.2}.
\end{definition}


Every algebraic category is quasi-algebraic, with a set of finitely presentable projective generators consisting of free objects. The difference between algebraic and quasi-algebraic categories lies in exactness.

\begin{theorem} \label{AlgExact}
\cite{Adamek94}*{Corollary 3.25} A category is algebraic if and only if it is quasi-algebraic and exact.
\end{theorem}

\subsection{Varieties of algebras}

Our choice of terminology for (quasi-)algebraic categories comes from universal algebra. Let us recall the basics about varieties of algebras; more details can be found in \cite{Adamek94}*{\S 3.A, 3.B} and \cite{Adamek04qua}*{\S 1.1}.

\begin{definition}
Let $S$ be a set, called the set of \emph{sorts}. An \Def{$S$-sorted signature} is a set $\Si$, called the set of \emph{operation symbols}, together with an \Def{arity} function which assigns to each $\si \in \Si$ a family of sorts $(s_1, \ldots, s_n)$ (with $n \geq 0$) called the \emph{input sorts} of $\si$ along with a sort $s$ is called the \emph{output sort}. We denote the arity by $s_1 \x \ldots \x s_n \to s$. In the one-sorted case, i.e., when $S = \{ \ast \}$ consists of one element, the arity can be identified with the number $n$ of inputs.

A \Def{$\Si$-algebra} $A$ consists of sets $A_s$ for each sort $s \in S$ together with maps (called \emph{operations})
\[
\si_A \colon A_{s_1} \x \ldots \x A_{s_n} \to A_s
\]
for each operation symbol $\si \in \Si$ of arity $s_1 \x \ldots \x s_n \to s$. A map $f \colon A \to B$ of $\Si$-algebras consists of maps $f_s \colon A_s \to B_s$ for each sort $s \in S$ which commute with all operations. Let $\Alg \Si$ denote the category of $\Si$-algebras.

An \Def{equation} is a pair of terms $(\tau, \tau')$ of the same sort in a free $\Si$-algebra. Equations are denoted symbolically by $\tau = \tau'$. A $\Si$-algebra $A$ \Def{satisfies} the equation $\tau = \tau'$ of sort $s$ if substitution of any inputs from the respective sets $A_{s_i}$ for the formal variables in $\tau$ and $\tau'$ yields the same element in $A_s$.  

A \Def{variety} (or \emph{equational class}) of $\Si$-algebras is a full subcategory of $\Alg \Si$ of the form $\Alg (\Si,E)$, consisting of the $\Si$-algebras that satisfy all equations in some set of equations $E$.

A \emph{variety of $S$-sorted finitary algebras} is a variety of $\Si$-algebras for some $S$-sorted signature $\Si$. Here, \emph{finitary} refers to the fact that all operations have finitely many inputs. A \emph{variety of many-sorted finitary algebras}, or \emph{many-sorted finitary variety} for short, is a variety of $S$-sorted finitary algebras for some set $S$.
\end{definition}

\begin{example} \label{AbGpsVar}
Abelian groups form a one-sorted finitary variety. Let $S = \{ \ast \}$, and let $\Si = \{ \mu, e, \io \}$ be operation symbols of arity $2$, $0$, and $1$ respectively. Consider the set $E$ of four equations:
\begin{align*}
&\mu(\mu(x,y),z) = \mu(x,(\mu(y,z)) \\
&\mu(x,e) = x \\
&\mu(x, \io(x)) = e \\
&\mu(x,y) = \mu(y,x).
\end{align*}
Then abelian groups are precisely $(\Si,E)$-algebras, and we have $\Ab = \Alg (\Si,E)$.

Likewise, monoids, groups, rings, commutative rings, Lie algebras, $R$-modules, $R$-algebras, and commutative $R$-algebras (for a fixed commutative ring $R$) are one-sorted finitary varieties.
\end{example}

\begin{definition}
An \Def{implication} is a finite list of equations $\tau_i = \tau_i'$ called \emph{premises} together with an equation $\tau = \tau'$ called \emph{conclusion}. We denote an implication symbolically by:
\[
\bigwedge_i \tau_i = \tau_i' \Ra \tau = \tau'.
\]
A $\Si$-algebra $A$ \Def{satisfies} such an implication if for any inputs from the respective sets $A_s$ satisfying all premises, these inputs also satisfy the conclusion.

A \Def{quasivariety} (or \emph{implicational class}) of $\Si$-algebras is a full subcategory of $\Alg \Si$ of the form $\Alg (\Si,E,I)$, consisting of the $\Si$-algebras that satisfy a set of equations $E$ and a set of implications $I$.

\emph{Many-sorted finitary quasivarieties} are defined analogously to varieties.
\end{definition}

One has the following universal-algebraic characterization theorems. The first is due to Lawvere, at least in the one-sorted case; the second is due to Isbell.

\begin{theorem} \label{LawvereCharac}
\cite{Adamek94}*{Theorem 3.16, Remark 3.17} A category is algebraic if and only if it is equivalent to a many-sorted finitary variety.
\end{theorem}

\begin{remark}
Given a many-sorted finitary variety $\cat{C}$, the \emph{theory of $\cat{C}$} is the algebraic theory $\cat{T}_{\cat{C}} \dfn \cat{C}_{\mathrm{ff}}^{\opp}$, the opposite of the full subcategory $\cat{C}_{\mathrm{ff}}$ of $\cat{C}$ consisting of finitely generated free objects. One direction of Theorem \ref{LawvereCharac} is the equivalence $\cat{C} \cong \Model(\cat{T}_{\cat{C}})$.
\end{remark}

\begin{remark}
Abelian group objects in $\cat{C}$ can be described as $\cat{C}$-valued models of the theory $\cat{T}_{\Ab}$ of abelian groups: $\cat{C}_{\ab} \cong \Model(\cat{T}_{\Ab};\cat{C})$. Via this equivalence, the forgetful functor $U \colon \Model(\cat{T}_{\Ab};\cat{C}) \to \cat{C}$ from \ref{def:AlgTh} is the usual forgetful functor $U \colon \cat{C}_{\ab} \to \cat{C}$.
\end{remark}

\begin{theorem}[Isbell's Characterization Theorem] \label{IsbellCharac}
\cite{Adamek94}*{Theorem 3.24} A category is quasi-algebraic if and only if it is equivalent to a finitary quasivariety.
\end{theorem}

\begin{example} \label{TorsionFreeQuasi}
Consider the category $\tfAb$ of torsion-free abelian groups, viewed as a full subcategory of abelian groups. In the notation of \ref{AbGpsVar}, we have the equational presentation $\Ab = \Alg (\Si,E)$. Consider the set of implications $I$ given by:
\begin{align*}
&\mu(x,x) = e \Ra x = e \\
&\mu(x,\mu(x,x)) = e \Ra x = e \\
&\ldots
\end{align*}
Then torsion-free abelian groups are precisely those satisfying all implications in $I$, so that $\tfAb$ is the quasivariety $\Alg(\Si,E,I)$.

By \ref{IsbellCharac}, $\tfAb$ is a quasi-algebraic category, as one can check directly. The inclusion $\tfAb \inj \Ab$ has a left adjoint, which quotients out the torsion subgroup. Thus $\tfAb$ is cocomplete. Moreover, $\Z$ is a finitely presentable projective generator for $\tfAb$, as it is for $\Ab$.

Presenting $\tfAb$ as a quasivariety does not a priori exclude that it be a variety. However, by \ref{AlgExact}, $\tfAb$ is not a variety since it is not exact. For any integer $n \geq 2$, the map $n \colon \Z \to \Z$ in $\tfAb$ is a monomorphism which is not the kernel of its cokernel. Indeed, its cokernel is $\Z \to 0$, whose kernel is $1 \colon \Z \to \Z$. In other words, the equivalence relation $\{ (x,y) \in \Z \x \Z \mid x \equiv y \mod (n) \}$ on $\Z$ is not effective.
\end{example}

\begin{example} \label{ReducedQuasi}
Let $\Com$ denote the category of commutative rings, and $\redCom$ the full subcategory consisting of reduced commutative rings, i.e., those without nilpotents. Then $\Com$ is  variety $\Alg (\Si,E)$, with signature $\Si = \{ \mu, e, \io, m, u \}$ where $\mu, e, \io$ represent the addition, zero element, and negative in the underlying abelian group, and $m$ and $u$ represent the multiplication and unit element. Consider the set of implications $I$ given by:
\begin{align*}
&m(x,x) = e \Ra x = e \\
&m(x,m(x,x)) = e \Ra x = e \\
\ldots
\end{align*}
Then reduced commutative rings are precisely those satisfying all implications in $I$, so that $\redCom$ is the quasivariety $\Alg(\Si,E,I)$.

Again, one can check directly that $\redCom$ is quasi-algebraic. The inclusion $\redCom \inj \Com$ has a left adjoint, which quotients out the nilradical. Thus $\redCom$ is cocomplete. Moreover, the free commutative ring on one generator, the polynomial ring $\Z[x]$, is a finitely presentable projective generator for $\redCom$, as it is for $\Com$.

However, $\redCom$ is not a variety since it is not exact. Consider the equivalence relation $R = \{ (x,y) \in \Z \x \Z \mid x \equiv y \mod (4) \}$ on $\Z$. The coequalizer of $R$ in $\redCom$ is $\Z \surj \Z/2$, whose kernel pair is $\{ (x,y) \in \Z \x \Z \mid x \equiv y \mod (2) \}$, hence $R$ is not effective.
\end{example}

A useful generalization of algebraic theories is provided by sketches, as introduced by Ehresmann.

\begin{definition}
\cite{Adamek94}*{Definition 1.49} \cite{Borceux94v2}*{Definition 5.6.1} A \Def{limit sketch} consists of a pair $(\cat{S},\cat{L})$ where $\cat{S}$ is a small category, and $\cat{L}$ is a set of limiting cones over small diagrams in $\cat{S}$. A \Def{finite product sketch}, or \Def{FP sketch} for short, is a limit sketch where the limiting cones in $\cat{L}$ are over finite discrete diagrams in $\cat{S}$, i.e., the cones are finite products.

Note that an algebraic theory $\cat{T}$ can be viewed as an FP sketch, where we take $\cat{L}$ to be the set of all finite products in $\cat{T}$.
\end{definition}

\begin{definition}
A \Def{model} for the limit sketch $(\cat{S},\cat{L})$ is a functor $M \colon \cat{S} \to \Set$ sending the cones in $\cat{L}$ to limiting cones in $\Set$; in other words, $M$ preserves the specified limits. Morphisms between models are natural transformations. The category $\Model(\cat{S},\cat{L})$ of models for $(\cat{S},\cat{L})$ is a full subcategory of the functor category $\Set^{\cat{S}}$.
\end{definition}

\begin{definition}
A category is \Def{FP sketchable} if it is equivalent to the category $\Model(\cat{S},\cat{L})$ of models for some FP sketch $(\cat{S},\cat{L})$.
\end{definition}

\begin{theorem}
\cite{Adamek94}*{Theorem 3.16, Remark 3.17} A category is FP sketchable if and only if it is algebraic.
\end{theorem}

Therefore, FP sketches provide a more general way to describe an algebraic category, but do not provide a broader class of categories of models.

\section{Setup for Quillen (co)homology} \label{sec:SetupHQ}

In this section, we study in more detail the categorical assumptions needed in order to work with Quillen cohomology. Most importantly, we want the prolonged adjunction $Ab_X \colon s \cat{C}/X \rla s (\cat{C}/X)_{\ab} \colon U_X$ to be a Quillen pair.

\subsection{Prolonged adjunctions as Quillen pairs} \label{sec:QuilPairs}

Recall the following useful fact, giving sufficient conditions for the regular epimorphisms to be determined by the projectives.

\begin{proposition}
\cite{Quillen67}*{\S II.4, Proposition 2} If a category $\cat{C}$ has finite limits and enough projectives, then every $\cat{P}$-epic map in $\cat{C}$ is a regular epimorphism. In other words, a map $f \colon X \to Y$ is a regular epimorphism if and only if the map:
\[
f_* \colon \Hom_{\cat{C}}(P,X) \to \Hom_{\cat{C}}(P,Y)
\]
is a surjection for every projective $P$. 
\end{proposition}

\begin{corollary} \label{EnoughProjReg}
If $\cat{C}$ has finite limits and enough projectives, then regular epimorphisms in $\cat{C}$ are closed under pullbacks.

In particular, if moreover $\cat{C}$ has coequalizers of kernel pairs, then $\cat{C}$ is regular.
\end{corollary}


\begin{proposition}
Assume we have an adjunction $F \colon \cat{C} \rla \cat{D} \colon G$. Then $G$ sends regular epimorphisms to $\cat{P}$-epic maps if and only if $F$ preserves projectives.

In particular, if $\cat{C}$ has finite limits and enough projectives, then the condition is equivalent to $G$ preserving regular epimorphisms.
\end{proposition}

\begin{proof}
Given a map $f \colon d \to d'$ in $\cat{D}$ and an object $P$ in $\cat{C}$, consider the commutative diagram of sets:
\[
\xymatrix{
\Hom_{\cat{D}}(FP,d) \ar[d]_{\cong} \ar[r]^{f_*} & \Hom_{\cat{D}}(FP,d') \ar[d]^{\cong} \\
\Hom_{\cat{C}}(P,Gd) \ar[r]^{(Gf)_*} & \Hom_{\cat{C}}(P,Gd'). \\
}
\]
in which the top map is surjective if and only if the bottom map is surjective. Surjectivity for every projective $P$ in $\cat{C}$ and every regular epimorphism $f \colon d \to d'$ in $\cat{D}$ is equivalent to $FP$ being projective in $\cat{D}$ for every projective $P$ in $\cat{C}$, and also equivalent to $Gf \colon Gd \to Gd'$ being $\cat{P}$-epic in $\cat{C}$ for every regular epimorphism $f \colon d \to d'$ in $\cat{D}$.
\end{proof}

\begin{lemma} \label{Cofibrant}
Let $\cat{C}$ be a category with nice simplicial objects, as in Definition \ref{NiceSplObj}. Then every cofibrant object of $s\cat{C}$ is degreewise projective.
\end{lemma}

\begin{proof}
Let $\mathrm{ev}_n \colon s\cat{C} \to \cat{C}$ be the functor evaluating at $\ord{n}$, and let $r_n \colon \cat{C} \to s\cat{C}$ be its right adjoint, which can be described as a right Kan extension. 
Let $C_{\bu}$ be a cofibrant object in $s\cat{C}$. We want to show that $C_n$ is projective. The lifting problem
\[
\xymatrix{
& X \ar@{->>}[d]^{f} \\
C_n \ar@{-->}[ur] \ar[r] & Y \\
}
\]
for a regular epimorphism $f \colon X \surj Y$ in $\cat{C}$ is equivalent, by adjunction, to the lifting problem
\[
\xymatrix{
& r_n X \ar[d]^{r_n f} \\
C_{\bu} \ar@{-->}[ur] \ar[r] & r_n Y \\
}
\]
in $s\cat{C}$. Since $C_{\bu}$ is cofibrant, it suffices to show that $r_n f$ is a trivial fibration in $s\cat{C}$ to guarantee the existence of such a lift.

To show that $r_n f$ is a trivial fibration, let $P$ be a projective object of $\cat{C}$, and consider the map of simplicial sets
\[
\xymatrix{
\Hom_{\cat{C}}(P,r_n X) \ar[r]^{(r_n f)_*} & \Hom_{\cat{C}}(P,r_n Y).
}
\]
Recall the isomorphism of simplicial sets from \cite{Goerss99}*{Proof of Theorem II.2.5}
\[
\Hom_{\cat{C}}(P,Z_{\bu}) \cong \HOM_{s\cat{C}}(P_c,Z_{\bu})
\]
where $\HOM_{s\cat{C}}(V_{\bu},Z_{\bu})$ is the usual simplicial mapping space in $s\cat{C}$, whose degree $n$ object is $\HOM_{s\cat{C}}(V_{\bu},Z_{\bu})_n = \Hom_{s\cat{C}}(V_{\bu} \ot \De^n,Z_{\bu})$. To show that the map $(r_n f)_*$ is a trivial fibration, we test it against an arbitrary cofibration of simplicial sets:
\[
\xymatrix{
A_{\bu} \ar[d]_{i_{\bu}} \ar[r] & \Hom_{\cat{C}}(P,r_n X) \ar[d]^{(r_n f)_*} \\ 
B_{\bu} \ar@{-->}[ur] \ar[r] & \Hom_{\cat{C}}(P,r_n Y). \\
}
\]
By adjunction, this lifting problem in $s\Set$ is equivalent to the lifting problem in $s\cat{C}$:
\[
\xymatrix{
P_c \ot A_{\bu} \ar[d]_{P_c \ot i_{\bu}} \ar[r] & r_n X \ar[d]^{r_n f} \\ 
P_c \ot B_{\bu} \ar@{-->}[ur] \ar[r] & r_n Y \\
}
\]
which, again by adjunction, is equivalent to the lifting problem in $\cat{C}$:
\begin{equation} \label{eq:LiftingProb}
\xymatrix{
(P_c \ot A_{\bu})_n \ar[d]_{(P_c \ot i_{\bu})_n} \ar[r] & X \ar[d]^{f} \\ 
(P_c \ot B_{\bu})_n \ar@{-->}[ur] \ar[r] & Y. \\
}
\end{equation}
Using the explicit description of the tensoring $\ot \colon s\cat{C} \x s\Set \to s\cat{C}$, the map $(P_c \ot i_{\bu})_n$ on the left can be written as:
\begin{equation} \label{eq:SummandIncl}
\coprod_{a \in A_n} P \to \coprod_{b \in B_n} P
\end{equation}
induced by the map of sets $i_n \colon A_n \to B_n$. Since $i_{\bu} \colon A_{\bu} \to B_{\bu}$ is a cofibration of simplicial sets, $i_n \colon A_n \to B_n$ is injective and the map \eqref{eq:SummandIncl} is the inclusion of the corresponding summands. Since $P$ is projective in $\cat{C}$, the lifting problem \eqref{eq:LiftingProb} has a solution.
\end{proof}

\begin{lemma} \label{SplModelStruc}
Let $\cat{C}$ be a category with nice simplicial objects.
\begin{enumerate}
\item \label{LevelwiseRegEpi} Every trivial fibration $f_{\bu} \colon X_{\bu} \to Y_{\bu}$ in $s\cat{C}$ is degreewise $\cat{P}$-epic. In particular, if $\cat{C}$ has enough projectives, then $f_{\bu}$ is a degreewise regular epimorphism.

\item \label{LevelZero} If $f \colon X \to Y$ is a $\cat{P}$-epic map in $\cat{C}$, then there is a trivial fibration $f_{\bu} \colon X_{\bu} \to Y_c$ in $s\cat{C}$ whose degree $0$ part is $f_0 = f$.

\item \label{EpicFromProj} Every object $X$ of $\cat{C}$ admits a $\cat{P}$-epic map $P \to X$ from a projective $P$.

\item \label{EnoughProj} $\cat{C}$ has enough projectives if and only if all trivial fibrations in $s\cat{C}$ are degreewise regular epimorphisms. 
\end{enumerate}
\end{lemma}

\begin{proof}
1. Let $P$ be a projective object in $\cat{C}$. By definition of trivial fibrations in $s\cat{C}$, the map
\[
\xymatrix{
\Hom_{\cat{C}}(P,X_{\bu}) \ar[r]^{f_*} & \Hom_{\cat{C}}(P,Y_{\bu})
}
\]
is a trivial fibration of simplicial sets, in particular a degreewise surjection. Therefore each $f_n$ is $\cat{P}$-epic.

2. Since $f \colon X \to Y$ coequalizes the two projections of the kernel pair $X \x_Y X \rra X$ (though $f$ need not be the coequalizer), one can form the augmented simplicial object:
\[
\xymatrix{
\ldots & X \x_Y X \x_Y X \ar@<1.2ex>[r] \ar[r] \ar@<-1.2ex>[r] & X \x_Y X \ar@<0.6ex>[r] \ar@<-0.6ex>[r] & X \ar[r]^f & Y \\
}
\]
which can be viewed as a map $f_{\bu} \colon X_{\bu} \to Y_c$ in $s\cat{C}$. Here, $X_{\bu}$ is the simplicial object with $X_n = X \x_Y \ldots \x_Y X$ ($n+1$ factors), where faces are given by projections and degeneracies are given by diagonals.

Let us show that $f_{\bu}$ is a trivial fibration in $s\cat{C}$. Without loss of generality, we may assume $\cat{C} = \Set$, since for every projective $P$ of $\cat{C}$, the functor $\Hom_{\cat{C}}(P,-) \colon \cat{C} \to \Set$ preserves limits and sends $\cat{P}$-epic maps to surjections. Since $Y_c$ is constant, the assertion is equivalent to this: the preimage by $f \colon X_{\bu} \to Y_c$ over each point $y \in Y$ is a contractible Kan complex. Since $f_0 = f \colon X \to Y$ is surjective, we may assume that $Y$ is a point and $X$ is non-empty.

Now $X_{\bu} = N(EX)$ is the nerve of the contractible groupoid (or indiscrete category) $EX$ on the set $X$, where $EX$ has as objects the elements of $X$, and exactly one morphism from $x$ to $x'$ for any $x,x' \in X$. Hence $X_{\bu}$ is a contractible Kan complex.

3. Let $C_{\bu} \to X_c$ be a trivial fibration in $s\cat{C}$ from a cofibrant object to $X$ viewed as a constant simplicial object. Then $C_0$ is projective, by \ref{Cofibrant}, and the map $C_0 \to X$ is $\cat{P}$-epic, by part \ref{LevelwiseRegEpi}.

4. If $\cat{C}$ has enough projectives, then every $\cat{P}$-epic map in $\cat{C}$ is a regular epimorphism, and by part \ref{LevelwiseRegEpi}, trivial fibrations in $s\cat{C}$ are degreewise regular epimorphisms. Conversely, if trivial fibrations in $s\cat{C}$ are degreewise regular epimorphisms, then the map $C_0 \to X$ constructed in part \ref{EpicFromProj} is a regular epimorphism from a projective.
\end{proof}

\begin{remark}
By \hyperref[EpicFromProj]{\ref*{SplModelStruc} (\ref*{EpicFromProj})}, the class $\cat{P}$ of regular projectives and the $\cat{P}$-epic maps then form a \emph{projective class} in the sense of \cite{Christensen02}*{Definition 1.1}; see also \cite{Christensen02}*{\S 6.2}.
\end{remark}

\begin{remark}
The converse of part \ref{LevelwiseRegEpi} is false, even for $\cat{C} = \Set$. For example, recall that a map of constant simplicial sets is always a fibration, and is a weak equivalence if and only if it is bijective. Now take a surjective map of sets $f \colon X \surj Y$ which is not bijective, viewed as a map of constant simplicial sets $f_c \colon X_c \to Y_c$. Then $f_c$ is a degreewise regular epimorphism but not a trivial fibration. 
\end{remark}

\begin{remark}
Having nice simplicial objects does not guarantee having enough projectives. For example, consider the category $\cat{C} = \Poset$, which is locally finitely presentable. Recall from \ref{PosetNotQuasi} that the projectives in $\Poset$ are precisely the discrete posets. Therefore, the notions in Definition \ref{NiceSplObj} are that a map $f_{\bu} \colon X_{\bu} \to Y_{\bu}$ of simplicial posets is a fibration (resp. weak equivalence) if the map of underlying simplicial sets $Uf_{\bu} \colon UX_{\bu} \to UY_{\bu}$ is a fibration (resp. weak equivalence) of simplicial sets. By \cite{Goerss99}*{Corollary II.5.6}, these notions do define a (closed) model structure on $s\Poset$, in fact a simplicial model structure.

By \hyperref[EnoughProj]{\ref*{SplModelStruc} (\ref*{EnoughProj})}, $s\Poset$ has some trivial fibrations which are not degreewise regular epimorphisms. Let us describe an explicit example thereof. A map of constant simplicial posets is always a fibration, and is a weak equivalence if and only if it is bijective. In particular, consider the discrete two-element poset $X = \{ x_0, x_1 \}$, the non-discrete two-element poset $Y = \{ y_0 < y_1 \}$, and the map $f \colon X \to Y$ defined by $f(x_i) = y_i$ for $i=0,1$. Then the map of constant simplicial posets $f_c \colon X_c \to Y_c$ is a trivial fibration in $s\Poset$, but it is not a regular epimorphism in any degree.
\end{remark}

\begin{proposition} \label{ProlongQuilPair}
Assume $\cat{C}$ and $\cat{D}$ have nice simplicial objects. Assume we have an adjunction $F \colon \cat{C} \rla \cat{D} \colon G$, and hence a prolonged adjunction:
\[ 
\xymatrix{
s\cat{C} \ar@<0.6ex>[r]^-{F} & s\cat{D} \ar@<0.6ex>[l]^-{G}
}
\]
between model categories. Then this prolonged adjunction is a Quillen pair if and only if $F$ preserves projectives, or equivalently, if $G$ sends regular epimorphisms to $\cat{P}$-epic maps.
\end{proposition}

\begin{proof}
($\Ra$) Take a regular epimorphism $f \colon X \to Y$ in $\cat{D}$ and consider a trivial fibration $f_{\bu} \colon X_{\bu} \to Y_c$ in $s\cat{D}$ satisfying $f_0 = f$, as in \hyperref[LevelZero]{\ref*{SplModelStruc} (\ref*{LevelZero})}. Since $G$ prolongs to a right Quillen functor, $G f_{\bu}$ is a trivial fibration in $s\cat{C}$, and hence degreewise $\cat{P}$-epic. In particular, $Gf = Gf_0$ is $\cat{P}$-epic.

($\Leftarrow$) We show a slightly stronger statement: $G$ preserves fibrations and weak equivalences. Take a fibration (resp. weak equivalence) $f \colon X_{\bu} \to Y_{\bu}$ in $s\cat{D}$ and a projective $P$ in $\cat{C}$, and consider:
\[
\xymatrix{
\Hom_{\cat{C}}(P,GX_{\bu}) \ar[d]_{\cong} \ar[r]^{(Gf)_*} & \Hom_{\cat{C}}(P,GY_{\bu}) \ar[d]^{\cong} \\
\Hom_{\cat{D}}(FP,X_{\bu}) \ar[r]^{f_*} & \Hom_{\cat{D}}(FP,Y_{\bu}). \\
}
\]
By assumption, $FP$ is projective in $\cat{D}$, hence the bottom and top maps are fibrations (resp. weak equivalences) of simplicial sets. Thus $Gf \colon GX_{\bu} \to GY_{\bu}$ is a fibration (resp. weak equivalence) in $s\cat{C}$.
\end{proof}

\begin{remark}
We have seen that a prolonged right Quillen functor in \ref{ProlongQuilPair} is particularly strong: it preserves fibrations and \emph{all} weak equivalences, not just between fibrant objects. However, the prolonged left Quillen functor does not enjoy this additional property in general, i.e., it need not preserve all weak equivalences, only those between cofibrant objects.
\end{remark}

\begin{example}
Let $R$ be a commutative ring and consider the functor $R \ot -$ from abelian groups to $R$-modules. It preserves projectives (since it sends a free abelian group to a free $R$-module), but the prolonged left Quillen functor does not preserve all weak equivalences if $R$ is not flat over $\Z$.
\end{example}

\subsection{Slice categories}

Proposition \ref{ProlongQuilPair} gives a simple criterion for when a prolonged adjunction is a Quillen pair. We want to know if the induced adjunction on slice categories is also a Quillen pair. Let us first describe regular epimorphisms and projectives in the slice category.

\begin{proposition} \label{EpiSlice}
If $f \colon Y \to Z$ is a regular epimorphism in $\cat{C}$, then:
\[
\xymatrix{
Y \ar[dr] \ar[r]^f & Z \ar[d] \\
& X \\
}
\]
is a regular epimorphism in $\cat{C}/X$. The converse also holds if $\cat{C}$ has coequalizers.
\end{proposition}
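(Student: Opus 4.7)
The plan is to unpack the definition of regular epimorphism as a coequalizer of some parallel pair, and move between $\CC$ and $\CC/X$ by a lifting argument. I treat the two directions separately.

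For the forward direction, I would start with a coequalizer presentation $g, h \colon W \rra Y$ with coequalizer $f \colon Y \to Z$ in $\CC$ and lift it to $\CC/X$. The natural move is to equip $W$ with the structure map $W \xrightarrow{g} Y \to X$; then $g$ is tautologically over $X$, while $h$ becomes a map over $X$ because $fg = fh$ composed with $Z \to X$ yields the same structure map on $W$ via either arrow. It remains to verify that $f$ is a coequalizer of $g, h$ in $\CC/X$. Any morphism $k \colon (Y, Y \to X) \to (T, T \to X)$ in $\CC/X$ coequalizing $g, h$ factors uniquely as $k = mf$ in $\CC$, and $m$ is automatically over $X$ because $f$ is an epi in $\CC$ and one can cancel it from the equality of the two composites $Z \to X$ and $Z \xrightarrow{m} T \to X$ after precomposition with $f$.

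For the converse, assume $f$ is the coequalizer in $\CC/X$ of a pair $g, h \colon W \rra Y$. Using that $\CC$ has coequalizers, I form $q \colon Y \to Y'$, the coequalizer in $\CC$. The structure map $Y \to X$ coequalizes $g, h$ (since $g, h$ are over $X$), hence descends uniquely to $Y' \to X$, making $q$ a morphism in $\CC/X$. A diagram chase mirroring the forward direction shows $q$ is in fact the coequalizer of $g, h$ in $\CC/X$ as well. Since coequalizers are unique up to canonical isomorphism, $f$ and $q$ are isomorphic as morphisms in $\CC/X$, hence also as morphisms in $\CC$, so $f$ exhibits $Z$ as the coequalizer of $g, h$ in $\CC$ and is therefore a regular epi there.

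I do not anticipate a serious obstacle; the argument is a clean categorical bookkeeping exercise. The subtlest point is repeatedly checking that factorizations through coequalizers automatically respect structure maps to $X$, which always rests on cancelling some epimorphism (either $f$ or $q$, viewed in $\CC$). The coequalizer hypothesis on $\CC$ is used exactly once, to construct $Y'$ in the converse direction.
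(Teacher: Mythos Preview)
Your argument is correct and amounts to an explicit unpacking of the paper's one-line proof: the paper simply observes that the source functor $\CC/X \to \CC$ creates colimits (citing Barr, Chap.~1, Prop.~8.12), and your two directions are exactly what this creation property says for coequalizers. The only difference is packaging---you verify creation by hand for the specific colimit shape in question rather than invoking it as a known fact.
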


\begin{proof}
See \cite{Barr02}*{Chapter 1, Proposition 8.12}. It follows from the fact that the ``source'' forgetful functor $\cat{C}/X \to \cat{C}$ creates colimits.
\end{proof}

\begin{proposition} \label{ProjSlice}
\begin{enumerate}
\item If $P$ is projective in $\cat{C}$, then $p \colon P \to X$ is projective in $\cat{C}/X$.
\item The converse also holds if $\cat{C}$ has enough projectives.
\end{enumerate}
\end{proposition}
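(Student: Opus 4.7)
The plan is to use Proposition \ref{EpiSlice} to translate regular epis between $\CC$ and $\CC/X$, and then carry out two short arguments that mirror each other.

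For part (1), I fix a projective $P$ of $\CC$ and an arbitrary map $p\colon P \to X$. Given a regular epi $f \colon (Y,y) \twoheadrightarrow (Z,z)$ in $\CC/X$ and a morphism $g\colon (P,p) \to (Z,z)$, Proposition \ref{EpiSlice} tells us that the underlying map $f\colon Y \to Z$ is a regular epi in $\CC$. Projectivity of $P$ in $\CC$ then produces a lift $\tilde g \colon P \to Y$ with $f \tilde g = g$. A short check, using $y = zf$ and $z g = p$, gives $y \tilde g = z f \tilde g = z g = p$, so $\tilde g$ is automatically a morphism over $X$ and hence a lift in $\CC/X$.

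For part (2), I assume $p\colon P \to X$ is projective in $\CC/X$ and use the hypothesis that $\CC$ has enough projectives to choose a regular epi $q\colon Q \twoheadrightarrow P$ with $Q$ projective in $\CC$. Precomposition with $q$ makes $Q$ an object of $\CC/X$, and by Proposition \ref{EpiSlice} the map $q\colon (Q, pq) \twoheadrightarrow (P,p)$ is a regular epi in $\CC/X$. Applying projectivity of $(P,p)$ to the identity $\id_{(P,p)}$ yields a section $s\colon (P,p) \to (Q,pq)$ with $q s = \id_P$, exhibiting $P$ as a retract of $Q$ in $\CC$. Since projectives in any category are closed under retracts (compose any lifting problem out of $P$ with the retraction $q\colon Q \to P$, lift along the given regular epi using projectivity of $Q$, and precompose with the section $s$), it follows that $P$ is projective in $\CC$.

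The argument is essentially routine bookkeeping once Proposition \ref{EpiSlice} is invoked; the only genuinely nontrivial ingredient is the existence of the projective cover $Q \twoheadrightarrow P$ in part (2), which is precisely where the ``enough projectives'' hypothesis enters. Without it there is no ambient projective of $\CC$ to retract off, and the converse can fail.
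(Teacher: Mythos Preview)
Your proof is correct and follows essentially the same approach as the paper: in part (1) you lift in $\CC$ using Proposition~\ref{EpiSlice} and then verify the triangle over $X$ commutes via $y\tilde g = zf\tilde g = zg = p$, and in part (2) you cover by a projective, split the resulting regular epi in $\CC/X$, and conclude by the retract argument. The only differences are notational (the paper writes $E \to X$ for the object assumed projective in $\CC/X$ and $P$ for the covering projective, whereas you use $P$ and $Q$), and you spell out why retracts of projectives are projective, which the paper leaves implicit.
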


\begin{proof}
1. Start with a regular epimorphism:
\[ 
\xymatrix{
Y \ar[dr]_y \ar[r]^f & Z \ar[d]^z \\
& X \\
}
\]
in $\cat{C}/X$, which means $f \colon Y \to Z$ is a regular epimorphism in $\cat{C}$, by \ref{EpiSlice}. We want to know if the map:
\[ 
f_* \colon \Hom_{\cat{C}/X} (P \ral{p} X, Y \ral{y} X) \to \Hom_{\cat{C}/X} (P \ral{p} X, Z \ral{z} X)
\]
is surjective. Let $\al$ be a map in the right-hand side which we are trying to reach and consider the diagram:
\[ 
\xymatrix{
& Y \ar[ddr]_y \ar[rr]^f & & Z \ar[ddl]^z \\
P \ar@{-->}[ur]^{\tild{\al}} \ar[urrr]^{\al} \ar[drr]_p & & & \\
& & X & \\
}
\]
Since $P$ is projective in $\cat{C}$, there is a lift $\tild{\al}$ in the top triangle, meaning $f \tild{\al} = \al$. If $\tild{\al}$ is in fact a map in $\Hom_{\cat{C}/X} (P \ral{p} X, Y \ral{y} X)$, then it will be our desired lift. So it suffices to check that the triangle on the left commutes: $y \tild{\al} = z f \tild{\al} = z \al = p$.

2. Let $E \ral{e} X$ be projective in $\cat{C}/X$. Since $\cat{C}$ has enough projectives, pick a regular epimorphism $\pi \colon P \to E$ from a projective $P$. Consider the diagram:
\[
\xymatrix{
& P \ar[ddr]_{e \pi} \ar[rr]^{\pi} & & E \ar[ddl]^e \\
E \ar@{-->}[ur]^{s} \ar[urrr]^{\id} \ar[drr]_e & & & \\
& & X & \\
}
\]
where there exists a lift $s$ since $E \ral{e} X$ is projective in $\cat{C}/X$. The relation $\pi s = \id_E$ exhibits $E$ as a retract of a projective in $\cat{C}$, hence itself projective.
\end{proof}

Now we can describe the standard Quillen model structure on $s(\cat{C}/X) \cong s\cat{C} / X$. A map:
\begin{equation} \label{MapSlice}
\xymatrix{
Y_{\bu} \ar[dr]_y \ar[r]^f & Z_{\bu} \ar[d]^z \\
& X \\
}
\end{equation}
is a fibration (resp. weak equivalence) in $s(\cat{C}/X)$ if and only if the map:
\[
\xymatrix{
\Hom_{\cat{C}/X} (P \ral{p} X, Y_{\bu} \ral{y} X) \ar[r]^{f_*} & \Hom_{\cat{C}/X} (P \ral{p} X, Z_{\bu} \ral{z} X) \\
}
\]
is a fibration (resp. weak equivalence) of simplicial sets for all projective $P \ral{p} X$ in $\cat{C}/X$. By Proposition \ref{ProjSlice}, we can rephrase the latter as: for all projective $P$ in $\cat{C}$ and map $p \in \Hom_{\cat{C}}(P,X)$.

However, in the framework of Quillen (co)homology, we decided to work with the ``slice'' model structure on $s\cat{C} / X$, where the map \eqref{MapSlice} is a fibration (resp. weak equivalence) if and only if the map $f_* \colon \Hom_{\cat{C}} (P, Y_{\bu}) \to \Hom_{\cat{C}} (P, Z_{\bu})$ is a fibration (resp. weak equivalence) of simplicial sets for all projective $P$ in $\cat{C}$. In fact, let us check that the two model structures agree.

\begin{proposition}
There is a natural isomorphism of simplicial sets:
\[
\xymatrix{
\coprod_{p \in \Hom_{\cat{C}}(P,X)} \Hom_{\cat{C}/X} (P \ral{p} X, Y_{\bu} \ral{y} X) \ar[r]^-{\cong} & \Hom_{\cat{C}} (P, Y_{\bu}) . \\
}
\]
\end{proposition}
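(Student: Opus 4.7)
The plan is to exhibit the isomorphism levelwise and then check that it is compatible with the simplicial structure, using crucially that $X$ is viewed as a constant simplicial object.

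First, I fix a projective $P \in \CC$ and a map $y \colon Y_{\bu} \to X$ in $s\CC / X$. At each simplicial degree $n$, I consider the forgetful map
\[
\Phi_n \colon \coprod_{p \in \Hom_{\CC}(P,X)} \Hom_{\CC/X} (P \ral{p} X, Y_n \ral{y_n} X) \to \Hom_{\CC} (P, Y_n),
\]
which sends a pair $(p, \al)$ to $\al \colon P \to Y_n$. This is a bijection of sets: given any $\al \in \Hom_{\CC}(P, Y_n)$, the composite $p_\al \dfn y_n \al \colon P \to X$ is the unique element of $\Hom_{\CC}(P,X)$ such that $\al$ defines a morphism in $\CC/X$ from $P \ral{p_\al} X$ to $Y_n \ral{y_n} X$. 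Thus the inverse sends $\al$ to $(y_n \al, \al)$.

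Next, I verify that $\Phi_\bu$ is a map of simplicial sets, i.e., that it commutes with face and degeneracy operators. For any $\phy \colon [m] \to [n]$ in $\Delta$, both sides act by post-composition with $Y_{\bu}(\phy) \colon Y_n \to Y_m$. The only point to check is that this operation preserves the indexing by $p$ on the left-hand side. But since $X$ is constant simplicially, $X_{\bu}(\phy) = \id_X$, and since $y$ is a simplicial map, $y_m \circ Y_{\bu}(\phy) = X_{\bu}(\phy) \circ y_n = y_n$. Hence for $\al \in \Hom_{\CC/X}(P \ral{p} X, Y_n \ral{y_n} X)$, we have $y_m (Y_{\bu}(\phy) \al) = y_n \al = p$, so $Y_{\bu}(\phy) \al$ lies in the summand indexed by the same $p$. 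Naturality in $P$ and in the map $y \colon Y_{\bu} \to X$ is immediate from the construction.

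The main obstacle — if one can call it that — is just making the bookkeeping of the coproduct decomposition explicit; the content of the statement is that the fiber of $\Hom_{\CC}(P, Y_n) \to \Hom_{\CC}(P, X)$ over $p$ is exactly $\Hom_{\CC/X}(P \ral{p} X, Y_n \ral{y_n} X)$, which is a direct unwinding of the definition of the slice category, together with the constancy of $X$ to ensure simplicial compatibility.
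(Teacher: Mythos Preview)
Your proof is correct and follows essentially the same approach as the paper: both establish the levelwise bijection by partitioning $\Hom_{\CC}(P,Y_n)$ according to the composite with $y_n$, and then argue that this assembles into an isomorphism of simplicial sets. The paper phrases the second step as prolonging a natural isomorphism of functors $\CC/X \to \Set$ to simplicial objects, while you verify compatibility with the simplicial operators directly using constancy of $X$; these are the same argument.
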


\begin{proof}
For a fixed $y \colon Y \to X$, a map $g \colon P \to Y$ is the same as the data of the commutative diagram:
\[
\xymatrix{
P \ar[dr]_p \ar[r]^g & Y \ar[d]^y \\
& X \\
}
\]
and thus we can partition all maps $g \colon P \to Y$ according to their composite $p = yg \colon P \to X$. More precisely, we take the map
\[
\coprod_{p \in \Hom_{\cat{C}}(P,X)} \Hom_{\cat{C}/X} (P \ral{p} X, Y \ral{y} X) \to \Hom_{\cat{C}} (P, Y)
\]
which is readily seen to be surjective and injective, i.e., an isomorphism of sets. Moreover, it is natural in $y \colon Y \to X$, i.e., the two sides define two naturally isomorphic functors from $\cat{C}/X$ to $\Set$. By naturality, it prolongs to a natural isomorphism of simplicial sets. Since colimits of simplicial objects are computed degreewise, the simplicial set whose $n^{\text{th}}$ degree is
\[
\left\{ \coprod_{p \in \Hom_{\cat{C}}(P,X)} \Hom_{\cat{C}/X} (P \ral{p} X, Y_n \ral{y_n} X) \right\}_n
\]
equals the left-hand side in the statement.
\end{proof}

\begin{proposition} \label{SameModelStruc}
The standard model structures on $s(\cat{C}/X)$ and $s\cat{C} / X$ are the same.
\end{proposition}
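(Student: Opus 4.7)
The plan is to show that the defining classes of fibrations and weak equivalences coincide in the two model structures; since both are cofibrantly generated simplicial model structures determined by their fibrations and weak equivalences, this forces the cofibrations to agree as well.

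Fix a map $f \colon (Y_{\bu} \to X) \to (Z_{\bu} \to X)$. In $s(\CC/X)$, by definition $f$ is a fibration (resp.\ weak eq) iff for every projective $(P \ral{p} X)$ in $\CC/X$ the induced map
\[
f_* \colon \Hom_{\CC/X}(P \ral{p} X, Y_{\bu} \ral{y} X) \to \Hom_{\CC/X}(P \ral{p} X, Z_{\bu} \ral{z} X)
\]
is a fibration (resp.\ weak eq) of simplicial sets. By proposition \ref{ProjSlice}, the projectives of $\CC/X$ are precisely the pairs $(P, p)$ with $P$ projective in $\CC$ and $p \in \Hom_{\CC}(P,X)$ arbitrary, so this is equivalent to requiring the condition for every projective $P$ in $\CC$ and every such $p$. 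In $s\CC/X$, on the other hand, $f$ is a fibration (resp.\ weak eq) iff $f_* \colon \Hom_{\CC}(P,Y_{\bu}) \to \Hom_{\CC}(P,Z_{\bu})$ is one for every projective $P$ in $\CC$.

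Now the previous proposition provides a natural isomorphism of simplicial sets
\[
\Hom_{\CC}(P,Y_{\bu}) \;\cong\; \coprod_{p \in \Hom_{\CC}(P,X)} \Hom_{\CC/X}(P \ral{p} X, Y_{\bu} \ral{y} X),
\]
and similarly for $Z_{\bu}$, under which $f_*$ on the left becomes the disjoint union of the maps $f_*$ on the right, indexed by $p$. So it remains to invoke the standard fact that a coproduct $\coprod_i g_i$ of maps of simplicial sets is a Kan fibration (resp.\ weak equivalence) iff each $g_i$ is: for fibrations this follows from the right lifting property against the connected horns $\Lambda^n_k \inj \Delta^n$, and for weak equivalences it follows because $\pi_0$ and the higher homotopy groups at any basepoint only see one component at a time. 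Hence the two testing conditions on $f$ are equivalent.

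This simultaneously identifies the fibrations and the weak equivalences of the two model structures, so the cofibrations, being determined by the left lifting property against acyclic fibrations, agree as well. The main delicate point is simply the coproduct-decomposition statement together with the elementary simplicial fact on coproducts of fibrations and weak equivalences; everything else is bookkeeping around proposition \ref{ProjSlice} and the preceding natural isomorphism.
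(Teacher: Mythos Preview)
Your proof is correct and follows essentially the same argument as the paper: use the coproduct decomposition from the preceding proposition to reduce the testing condition in $s\CC/X$ to the summand-by-summand condition in $s(\CC/X)$, invoke that a disjoint union of maps of simplicial sets is a fibration (resp.\ weak equivalence) iff each summand is, and then recover the cofibrations via the left lifting property. The paper's version is more terse and does not spell out the simplicial-set fact about coproducts, but the logic is identical.
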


\begin{proof}
The top row in the diagram:
\[
\xymatrix{
\Hom_{\cat{C}} (P, Y_{\bu}) \ar[r]^{f_*} & \Hom_{\cat{C}} (P, Z_{\bu}) \\
\coprod_p \Hom_{\cat{C}/X} (P \ral{p} X, Y_{\bu} \ral{y} X) \ar@{=}[u] \ar[r]^{f_*} & \coprod_p \Hom_{\cat{C}/X} (P \ral{p} X, Z_{\bu} \ral{z} X) \ar@{=}[u] \\
}
\]
is a fibration (resp. weak equivalence) of simplicial sets if and only if each summand is so. This means that $f$ is a fibration (resp. weak equivalence) in $s\cat{C} / X$ if and only if it is so in $s(\cat{C}/X)$. Moreover, the model structures are closed, i.e., cofibrations are determined by fibrations and weak equivalences (as having the left lifting property with respect to trivial fibrations). Therefore, the two model structures agree.
\end{proof}

\subsection{Regularity of abelian group objects} \label{sec:Regularity}

In this section, we study the properties of the category $\cat{C}_{\ab}$ of abelian group objects in a category $\cat{C}$ and the forgetful functor $U \colon \cat{C}_{\ab} \to \cat{C}$.

It is convenient to work with regular categories, so we would like to know if $\cat{C}_{\ab}$ is regular whenever $\cat{C}$ is. The main feature of regular categories is that any map can be factored as a regular epimorphism followed by monomorphism; we call this the \emph{regular-epi--mono factorization}, which is unique up to isomorphism \cite{Borceux94v2}*{Theorem 2.1.3}. Isomorphisms are precisely maps that are both a regular epimorphism and a monomorphism. We will check that all three classes of maps are preserved and reflected by $U$.

First, recall that $U$ is faithful, creates limits, and reflects isomorphisms. Indeed, if $Uf$ is an isomorphism in $\cat{C}$, then $(Uf)^{-1}$ lifts to a map in $\cat{C}_{\ab}$.

\begin{proposition} \label{PreserveMono}
Assume $\cat{C}$ has kernel pairs. Then $U$ preserves monomorphisms.
\end{proposition}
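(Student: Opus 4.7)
The plan is to use the characterization of monomorphisms via kernel pairs, combined with the fact (just recalled in the paper) that $U$ creates limits.

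First I would recall the standard criterion: in a category with kernel pairs, a map $f\colon X \to Y$ is a mono if and only if its kernel pair $(p_1,p_2)\colon X\times_Y X \rra X$ has $p_1=p_2$, equivalently the diagonal $\Delta\colon X\to X\times_Y X$ is an iso. Since $\CC$ has kernel pairs by hypothesis and $U$ creates limits, the category $Ab(\CC)$ inherits kernel pairs, so this criterion is applicable on both sides of $U$.

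Now suppose $f\colon A\to B$ is a mono in $Ab(\CC)$. Form its kernel pair $(p_1,p_2)\colon A\times_B A \rra A$ in $Ab(\CC)$; by the criterion, $p_1=p_2$. Applying $U$, the fact that $U$ creates (hence preserves) limits means that $U(A\times_B A) = UA \times_{UB} UA$ with structure maps $Up_1, Up_2$, i.e. $(Up_1,Up_2)$ is the kernel pair of $Uf$ in $\CC$. Since $Up_1=Up_2$ (functors respect equality), the criterion applied in $\CC$ gives that $Uf$ is a mono.

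The argument is essentially formal once the kernel-pair characterization is in place; there is no real obstacle. The only thing to be a little careful about is invoking the kernel-pair criterion in $Ab(\CC)$, which is justified by $U$ creating limits so that $Ab(\CC)$ has kernel pairs whenever $\CC$ does. One could alternatively phrase the proof by noting that $U$ has a left adjoint in all settings of interest and thus preserves monos automatically, but the kernel-pair argument has the virtue of requiring nothing beyond the existence of kernel pairs in $\CC$, which matches the hypothesis of the proposition.
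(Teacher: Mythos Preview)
Your proof is correct and follows essentially the same approach as the paper: both use the characterization of monos via equality of the kernel-pair projections, together with the fact that $U$ preserves (indeed creates) kernel pairs. The paper phrases it slightly more tersely as the general principle that any kernel-pair-preserving functor between categories with kernel pairs preserves monos, while you spell out the instance for $U$ and make explicit that $Ab(\CC)$ inherits kernel pairs.
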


\begin{proof}
In a category with kernel pairs, a map $f \colon X \to Y$ is a monomorphism if and only if the two projections $X \x_Y X \rra X$ from its kernel pair are equal. Thus, any functor between categories with kernel pairs which preserves kernel pairs also preserves monomorphisms.
\end{proof}

In \cite{Barr02}*{Chapter 6, Proposition 1.7}, Barr shows the following.

\begin{proposition} \label{LiftFactorization}
Assume $\cat{C}$ is regular. Then $U \colon \cat{C}_{\ab} \to \cat{C}$ lifts the regular-epi--mono factorization in $\cat{C}$. In other words, if $f \colon X \to Y$ is a map in $\cat{C}_{\ab}$ and $UX \surj Z' \inj UY$ is a regular-epi--mono factorization of the underlying map $Uf$, then we can lift it (uniquely) to a factorization $X \to Z \to Y$ in $\cat{C}_{\ab}$. 
\end{proposition}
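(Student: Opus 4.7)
The plan is to produce the abelian group structure on $W$ (the middle term of the factorization) by transporting the operations on $Y$ along the mono $i \colon W \inj UY$, and checking that this works by precomposing with powers of the regular epi $p \colon UX \surj W$.

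The key preliminary observation I would use is that, for each $n \geq 0$, the factorization $UX^n \surj W^n \inj UY^n$ is a regular epi - mono factorization of $Uf^n$. Monos are closed under products in any category with products, and Barr's results for regular categories give that regular epis are stable under products. Since $U$ preserves limits, $U(X^n) = (UX)^n$ and similarly for $Y$, so this factorization is intrinsic.

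To define the multiplication $m_W \colon W \x W \to W$, I would proceed as follows. Consider $\phi \dfn m_Y \circ (i \x i) \colon W \x W \to UY$. Since $f$ is a morphism in $Ab(\CC)$, $m_Y \circ (Uf \x Uf) = Uf \circ m_X$, and substituting $Uf = i \circ p$ gives $\phi \circ (p \x p) = i \circ (p \circ m_X)$. Form the pullback of $\phi$ along $i$:
\[
\xymatrix{
V \ar[r] \ar[d]_j & W \ar[d]^i \\
W \x W \ar[r]_-{\phi} & UY
}
\]
The map $p \x p$ factors through $j \colon V \to W \x W$ via the universal property. But $p \x p$ is a regular epi (by the key observation applied to $n=2$), while $j$ is a mono (pullback of the mono $i$), so in the regular category $\CC$ the mono $j$ must be an iso. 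The inverse, composed with $V \to W$, yields the desired $m_W$, and by construction $i \circ m_W = m_Y \circ (i \x i)$ and $m_W \circ (p \x p) = p \circ m_X$.

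The unit $e_W \colon 1 \to W$ and inverse $\iota_W \colon W \to W$ are constructed in exactly the same way, using the corresponding factorizations for $n = 0$ and $n = 1$ (the case $n=1$ being trivial). To verify the abelian group axioms on $W$ (associativity, unit, inverse, commutativity), note that each axiom is an equation between two parallel maps $W^n \to W$. After precomposing with the regular epi $p^n \colon UX^n \to W^n$, both sides of each equation reduce to the composition coming from the corresponding axiom on $X$, which holds by hypothesis. Since $p^n$ is an epi, the axioms hold on $W$. Finally, uniqueness of the lift is automatic: any abelian group structure on $W$ making $p$ and $i$ morphisms must satisfy $i \circ m_W' = m_Y \circ (i \x i)$, and since $i$ is a mono this determines $m_W'$ uniquely, and similarly for the unit and inverse.

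The main obstacle is the factorization step producing $m_W$: the key leverage is that, in a regular category, a mono through which a regular epi factors is automatically an iso, which is what allows the pullback construction to yield an honest map $W \x W \to W$ rather than just a relation.
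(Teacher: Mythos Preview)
The paper does not actually give a proof of this proposition; it simply cites Barr's \emph{Acyclic Models} (Chap.~6, Prop.~1.7). Your argument is correct and is essentially the standard one: the pullback step you use to produce $m_W$ is precisely the diagonal fill-in property of the (regular epi, mono) orthogonal factorization system in a regular category, phrased in terms of a pullback square. The one point worth making explicit is your ``key leverage'' claim that a mono through which a regular epi factors is an iso---this follows immediately from uniqueness of the factorization (or from regular epi $=$ strong epi), so it is fine, but it is doing the real work and deserves a one-line justification rather than being stated as folklore. Otherwise, nothing is missing: the product-stability of regular epis you invoke is a standard consequence of pullback-stability plus closure under composition, and the verification of axioms and uniqueness via the epi $p^n$ and the mono $i$ is clean.
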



\begin{corollary} \label{PreserveRegEpi}
If $\cat{C}$ is regular, then $U \colon \cat{C}_{\ab} \to \cat{C}$ preserves regular epimorphisms.
\end{corollary}


In addition, we would like to know if $U$ reflects regular epimorphisms.

\begin{proposition} \label{CreateCoeq}
If $\cat{C}$ is regular, then $\cat{C}_{\ab}$ has coequalizers of kernel pairs, created by $U$.
\end{proposition}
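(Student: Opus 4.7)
The plan is to reduce the statement to the regular epi--mono lifting already established in Proposition \ref{LiftFactorization}. Every kernel pair in $Ab(\CC)$ is the kernel pair of some map $f \colon X \to Y$ in $Ab(\CC)$, and since $U$ creates limits the pair $UK \rra UX$ is the kernel pair of $Uf$ in $\CC$. In the regular category $\CC$, the coequalizer of $UK \rra UX$ is precisely the regular epi part $Ue \colon UX \surj W$ of a factorization $Uf = m \circ e$. So the goal is to show (i) that this coequalizer lifts to $Ab(\CC)$, and (ii) that the lift enjoys the coequalizer universal property in $Ab(\CC)$.

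Step (i) is immediate from Proposition \ref{LiftFactorization}: the factorization $Uf = me$ lifts uniquely to a factorization $X \ral{e} Z \ral{m} Y$ in $Ab(\CC)$ with $UZ = W$, and $e$ coequalizes $K \rra X$ in $Ab(\CC)$ because it does so in $\CC$ and $U$ is faithful.

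For step (ii), let $g \colon X \to T$ be any map in $Ab(\CC)$ coequalizing $K \rra X$. Then $Ug$ coequalizes $UK \rra UX$ in $\CC$, so factors uniquely as $Ug = h \circ Ue$ for some $h \colon UZ \to UT$. The substantive task is to show that $h$ lifts to a morphism $\tild h \colon Z \to T$ in $Ab(\CC)$, i.e. that it respects the abelian group structures. I would verify $h \circ \mu_Z = \mu_T \circ (h \x h)$ by precomposing both sides with $Ue \x Ue \colon UX \x UX \to UZ \x UZ$: since $e$ and $g$ are morphisms of abelian group objects, both composites collapse to $Ug \circ \mu_X$, so it suffices that $Ue \x Ue$ be an epimorphism in $\CC$. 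Compatibility with unit and inverse is analogous, and uniqueness of $\tild h$ is immediate from faithfulness of $U$.

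The main obstacle is precisely this last observation, that $Ue \x Ue$ is epi. This is where regularity is used crucially: in a regular category, regular epis are stable under pullback, and the binary product $A \x B$ is the pullback of $A$ and $B$ along the terminal object, so $Ue \x 1$ and $1 \x Ue$ are each regular epis, and hence so is their composite $Ue \x Ue$. Once this categorical input is in hand, the rest is a short diagram chase using faithfulness of $U$ and the fact that $e$ and $g$ are abelian group maps.
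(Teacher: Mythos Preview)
Your proof is correct and follows the same route as the paper: lift the regular epi--mono factorization of $Uf$ via Proposition~\ref{LiftFactorization}, then verify the coequalizer universal property in $Ab(\CC)$. The paper in fact leaves that verification as ``one can check,'' so your Step~(ii)---reducing compatibility of the induced map $h$ with $\mu$ to the fact that $Ue \times Ue$ is epi in a regular category---supplies exactly the detail the paper omits.
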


\begin{proof}
Let $f \colon X \to Y$ be any map in $\cat{C}_{\ab}$ and take its kernel pair $X \x_Y X \rra X$. Since $U$ preserves limits, the underlying diagram is still a kernel pair, and we can take its coequalizer:
\[
\xymatrix{
UX \x_{UY} UX \ar@<0.6ex>[r]^-{pr_1} \ar@<-0.6ex>[r]_-{pr_2} & UX \ar[r]^{Uf} \ar@{->>}[dr] & UY \\
& & C. \ar[u]_h \\
}
\]
Since $\cat{C}$ is regular, the map $h \colon C \to Y$ is a monomorphism, by \cite{Barr02}*{Chapter 1, Proposition 8.10}. By \ref{LiftFactorization}, there is a unique lift $X \to \tild{C} \to Y$ of that regular-epi--mono factorization. One can check that $X \to \tild{C}$ is the desired coequalizer in $\cat{C}_{\ab}$ of the kernel pair of $f$.
\end{proof}

\begin{proposition} \label{ReflectRegEpi}
If $\cat{C}$ is regular, then $U \colon \cat{C}_{\ab} \to \cat{C}$ reflects regular epimorphisms.
\end{proposition}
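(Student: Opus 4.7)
My plan is to exploit the fact that in a regular category a regular epi is exactly the coequalizer of its own kernel pair, together with Proposition \ref{CreateCoeq}, which says that $U$ creates coequalizers of kernel pairs.

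Suppose $f \colon X \to Y$ is a map in $Ab(\CC)$ such that $Uf \colon UX \to UY$ is a regular epi in $\CC$. First I would form the kernel pair $X \times_Y X \rra X$ of $f$ in $Ab(\CC)$ (it exists since $U$ creates limits and $\CC$ has them via its regular structure), and then invoke Proposition \ref{CreateCoeq} to obtain its coequalizer $q \colon X \to C$ in $Ab(\CC)$, with the property that $Uq \colon UX \to UC$ is the coequalizer in $\CC$ of the underlying kernel pair $UX \times_{UY} UX \rra UX$ (which is the kernel pair of $Uf$, since $U$ preserves limits).

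Next, since $f$ coequalizes its own kernel pair, the universal property gives a unique map $g \colon C \to Y$ in $Ab(\CC)$ with $gq = f$; applying $U$ yields $Ug \circ Uq = Uf$ in $\CC$. Now the key point: because $\CC$ is regular, $Uf$ is itself the coequalizer of its kernel pair (\cite{Barr02}*{Chap 1, Prop 8.10}). Hence the induced map $Ug \colon UC \to UY$ is an isomorphism in $\CC$. Since $U$ reflects isos (as recalled at the start of this subsection), $g \colon C \to Y$ is an iso in $Ab(\CC)$.

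Therefore $f = gq$ is isomorphic to the coequalizer $q$, so $f$ itself is a coequalizer in $Ab(\CC)$, hence a regular epi. I do not anticipate a genuine obstacle here; the only thing one needs to be careful about is making sure that the diagram $X \times_Y X \rra X \xrightarrow{f} Y$ one is coequalizing in $Ab(\CC)$ maps under $U$ to the kernel pair of $Uf$, which is exactly what preservation of limits by $U$ guarantees.
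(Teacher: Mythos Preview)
Your proof is correct and follows essentially the same approach as the paper's: both use that $U$ creates limits (so the kernel pair of $f$ sits over the kernel pair of $Uf$), invoke Proposition~\ref{CreateCoeq}, and use that in a regular category a regular epi is the coequalizer of its own kernel pair. The only cosmetic difference is that the paper exploits the \emph{uniqueness} clause in ``creates'' directly---since $f$ itself is a cocone lifting $Uf$, it must be the coequalizer---whereas you build the coequalizer $q \colon X \to C$ abstractly and then show the comparison $g \colon C \to Y$ is an iso via reflection of isos; these are two phrasings of the same argument.
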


\begin{proof}
Let $f \colon X \to Y$ be a map in $\cat{C}_{\ab}$ such that $Uf$ is a regular epimorphism in $\cat{C}$. We want to show that $f$ is a regular epimorphism. Since $U$ creates limits, the kernel pair of $f$ is the unique lift of the kernel pair $UX \x_{UY} UX \rra UX$ of $Uf$, and the latter has a coequalizer, namely $Uf \colon UX \to UY$. Since $U$ creates coequalizers of kernel pairs, there is a unique cocone lifting $Uf \colon UX \to UY$ and it is a coequalizer of $X \x_Y X \rra X$. But $f \colon X \to Y$ is such a lift, hence $f$ is a regular epimorphism.
\end{proof}

\begin{corollary}
The lifted factorization of \ref{LiftFactorization} is a regular-epi--mono factorization in $\cat{C}_{\ab}$.
\end{corollary}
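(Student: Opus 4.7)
The plan is to observe that both halves of the corollary follow immediately from results already established in this subsection, essentially because $U$ reflects both of the relevant classes of maps.

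Recall the setup: we start with a map $f \colon X \to Y$ in $Ab(\CC)$, factor $Uf$ as $UX \surj UZ \inj UY$ in $\CC$, and use Proposition \ref{LiftFactorization} to obtain a unique lift $X \to Z \to Y$ in $Ab(\CC)$. I need to verify two things about this lift: that $X \to Z$ is a regular epi in $Ab(\CC)$, and that $Z \to Y$ is a mono in $Ab(\CC)$.

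The first claim is immediate from Proposition \ref{ReflectRegEpi}: since $U(X \to Z) = UX \surj UZ$ is a regular epi in $\CC$ by construction, and $U$ reflects regular epis, the lifted map $X \to Z$ is a regular epi in $Ab(\CC)$. For the second claim, I would invoke the fact (noted at the start of the subsection) that $U$ is faithful. A faithful functor automatically reflects monos: if $Ug$ is a mono and $gh_1 = gh_2$ in $Ab(\CC)$, then $Ug \cdot Uh_1 = Ug \cdot Uh_2$ forces $Uh_1 = Uh_2$, hence $h_1 = h_2$ by faithfulness. Applied to $U(Z \to Y) = UZ \inj UY$, this gives that $Z \to Y$ is a mono in $Ab(\CC)$.

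There is no real obstacle here; the work was done in Propositions \ref{LiftFactorization} and \ref{ReflectRegEpi}. The only point worth stating explicitly is why we get a mono in $Ab(\CC)$, which is the short faithfulness argument above and could alternatively be phrased via Proposition \ref{PreserveMono} applied in the opposite direction (using that $U$ creates limits, so it reflects kernel pairs and therefore monos).
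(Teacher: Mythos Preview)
Your proof is correct and matches the paper's intended argument: the corollary is stated without proof immediately after Proposition~\ref{ReflectRegEpi}, precisely because the regular-epi half follows from that proposition and the mono half from faithfulness of $U$ (as noted at the start of the subsection). There is nothing to add.
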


\begin{corollary} \label{AbReg}
If $\cat{C}$ is regular, then $\cat{C}_{\ab}$ is regular.
\end{corollary}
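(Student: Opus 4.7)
The plan is to verify the standard three-part definition of a regular category for $Ab(\CC)$: (i) finite limits exist, (ii) kernel pairs have coequalizers, (iii) regular epimorphisms are stable under pullback. Every one of these is designed to be handed to us by the preceding propositions about $U \colon Ab(\CC) \to \CC$.

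For (i), recall that $U$ creates limits; since $\CC$ has finite limits (regular categories do), so does $Ab(\CC)$. Part (ii) is exactly \ref{CreateCoeq}: $U$ creates coequalizers of kernel pairs, so they exist in $Ab(\CC)$ because they exist in $\CC$ (which is regular).

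For (iii), which is the only nontrivial point, I would take a regular epi $f \colon X \surj Y$ in $Ab(\CC)$ together with an arbitrary map $g \colon Z \to Y$ in $Ab(\CC)$, form the pullback $X \x_Y Z$, and show that the projection $g^* f \colon X \x_Y Z \to Z$ is again a regular epi. The idea is to transport the whole situation through $U$. Since $U$ preserves limits, $U(X \x_Y Z) = UX \x_{UY} UZ$ and $U(g^* f) = (Ug)^*(Uf)$. By \ref{PreserveRegEpi}, $Uf$ is a regular epi in $\CC$. Since $\CC$ is regular, regular epis are stable under pullback, so $(Ug)^*(Uf)$ is a regular epi in $\CC$. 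Now apply \ref{ReflectRegEpi} to conclude that $g^* f$ is a regular epi in $Ab(\CC)$.

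The main potential obstacle is ensuring that pullbacks in $Ab(\CC)$ really are computed in $\CC$ (so that step (iii) makes sense), but this is immediate from $U$ creating limits, so in effect there is no real obstacle: all the hard categorical work has already been done in \ref{PreserveMono}--\ref{ReflectRegEpi}, and this corollary is just the assembly. Optionally one can remark that the regular epi--mono factorization in $Ab(\CC)$, whose existence follows formally from (i)--(iii), is precisely the lifted factorization of \ref{LiftFactorization}, consistent with the preceding corollary.
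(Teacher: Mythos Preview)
Your proof is correct and matches the paper's own argument essentially verbatim: verify finite limits via $U$ creating limits, coequalizers of kernel pairs via \ref{CreateCoeq}, and pullback-stability of regular epis by pushing through $U$ using \ref{PreserveRegEpi}, regularity of $\CC$, and \ref{ReflectRegEpi}. The only difference is cosmetic---the paper compresses (i) and (ii) into one sentence and omits your closing remark about the factorization.
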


\begin{proof}
$\cat{C}_{\ab}$ has kernel pairs (and any limits that $\cat{C}$ has) and coequalizers of kernel pairs. It remains to check that the pullback of a regular epimorphism is a regular epimorphism:
\[
\xymatrix{
P \ar[d]_{f^*e} \ar[r] & X \ar@{->>}[d]^e \\
W \ar[r]_f & Y. \\
}
\]
Since $U$ preserves regular epimorphisms, $Ue$ is a regular epimorphism. Since pullbacks are computed in $\cat{C}$, we have $U(f^*e) = (Uf)^*(Ue)$, which is a regular epimorphism since $\cat{C}$ is regular. Since $U$ reflects regular epimorphisms, $f^*e$ itself is a regular epimorphism in $\cat{C}_{\ab}$.
\end{proof}

For the record, let us extract a more general statement from the arguments above.

\begin{proposition}
Let $\cat{T}$ be a one-sorted algebraic theory. Then all the statements in Subsection \ref{sec:Regularity} about $U \colon \cat{C}_{\ab} \to \cat{C}$ also apply to the forgetful functor $U \colon \Model(\cat{T};\cat{C}) \to \cat{C}$. In particular, if $\cat{C}$ is regular, then $\Model(\cat{T};\cat{C})$ is regular.
\end{proposition}

\begin{proof}
By the argument of \cite{Adamek94}*{Remark 3.17} or \cite{Borceux94v2}*{\S 3.3}, there is a one-sorted finitary variety $\cat{V}$ and an equivalence of categories $\Model(\cat{T};\cat{C}) \cong \Model(\cat{T}_{\cat{V}};\cat{C})$. In \cite{Barr02}*{Chapter 6, Proposition 1.7}, Barr notes that the proofs of \ref{LiftFactorization} and \ref{PreserveRegEpi} hold more generally for $\cat{C}$-valued models of finitary equational theories, i.e., for the forgetful functor $U \colon \Model(\cat{T}_{\cat{V}};\cat{C}) \to \cat{C}$ as above. One readily checks that the remaining proofs also hold in that context.
\end{proof}

\subsection{Abelianizations and pushforwards}

\begin{proposition} \label{AbExists}
Let $\cat{C}$ be a locally $\ka$-presentable category for some regular cardinal $\ka$. Then the following holds.
\begin{enumerate}
\item \label{CreateFiltColim} $U \colon \cat{C}_{\ab} \to \cat{C}$ creates $\ka$-filtered colimits. In particular, $\cat{C}_{\ab}$ has $\ka$-filtered colimits and $U$ preserves them.
\item \label{AbCLocPres} $\cat{C}_{\ab}$ is locally $\ka$-presentable.
\item \label{AbExistsPart} $U \colon \cat{C}_{\ab} \to \cat{C}$ has a left adjoint.
 \end{enumerate}
\end{proposition}

\begin{proof}
1. This is similar to the proof that $U$ creates limits. Let $J$ be a $\ka$-filtered category and $F \colon J \to \cat{C}_{\ab}$ a diagram whose underlying diagram $UF \colon J \to \cat{C}$ admits a colimit. Then there is a unique lift of the colimiting cocone in $\cat{C}$ to a cocone in $\cat{C}_{\ab}$. Indeed, there is at most one way to endow $\colim_J UF$ with structure maps, since they are prescribed on each summand, as illustrated in the diagram:
\[
\xymatrix{
**[l] \colim_J (UF \x UF) \cong \colim_J UF \x \colim_J UF \ar[r] & \colim_J UF \\
UF(j) \x UF(j) \ar[u] \ar[r]_-{\mu} & UF(j) \ar[u] \\
}
\]
where $j$ is any object of $J$. Applying $\colim_J$ to the structure maps of $UF$ produces those structure maps for $\colim_J UF$. The result is the colimit of $F$ in $\cat{C}_{\ab}$.

We used the fact that $\ka$-filtered colimits commute with finite limits in $\cat{C}$, since $\cat{C}$ is locally $\ka$-presentable \cite{Borceux94v2}*{Corollary 5.2.8} \cite{Adamek94}*{Proposition 1.59}.

2. Recall the equivalence $\cat{C}_{\ab} \cong \Model(\cat{T}_{\Ab};\cat{C})$. Since the diagrams in the limit sketch $\cat{T}_{\Ab}$ are all finite, the result follows from \cite{Adamek94}*{Proposition 1.53}.

3. $\cat{C}$ is locally $\ka$-presentable and so is $\cat{C}_{\ab}$, by part \ref{AbCLocPres}. Morever, $U \colon \cat{C}_{\ab} \to \cat{C}$ preserves all (small) limits as well as $\ka$-filtered colimits, by part \ref{CreateFiltColim}. By the adjoint functor theorem for locally presentable categories \cite{Adamek94}*{Theorem 1.66}, $U$ has a left adjoint.  
\end{proof}

Note that the statement \hyperref[AbExistsPart]{\ref*{AbExists} (\ref*{AbExistsPart})} can be found in \cite{Barr02}*{Chapter 6 \S 1.5}. Now recall the following useful fact.

\begin{proposition} \label{LocPresSlice}
\cite{Adamek94}*{Proposition 1.57} Let $\cat{C}$ be a locally $\ka$-presentable category for some regular cardinal $\ka$, and let $X$ be an object of $\cat{C}$. Then the slice category $\cat{C}/X$ is locally $\ka$-presentable.
\end{proposition}

\begin{proposition} \label{AbExist}
Let $\cat{C}$ be a locally presentable category. Then the following holds.
\begin{enumerate}
\item \label{AbExistPart} $\cat{C}$ has all abelianizations: For every object $X$ of $\cat{C}$, the forgetful functor \linebreak $U_X \colon (\cat{C}/X)_{\ab} \to \cat{C}/X$ has a left adjoint $Ab_X \colon \cat{C}/X \to (\cat{C}/X)_{\ab}$.
\item $\cat{C}$ has all pushforwards: For every map $f \colon X \to Y$ in $\cat{C}$, the pullback functor $f^* \colon (\cat{C}/Y)_{\ab} \to (\cat{C}/X)_{\ab}$ has a left adjoint $f_{\push} \colon (\cat{C}/X)_{\ab} \to (\cat{C}/Y)_{\ab}$.
\end{enumerate}
\end{proposition}

\begin{proof}
Let $\ka$ be a regular cardinal such that $\cat{C}$ is locally $\ka$-presentable.
 
1. By \ref{LocPresSlice}, $\cat{C}/X$ is locally $\ka$-presentable. By \ref{AbExists}, $U_X \colon (\cat{C}/X)_{\ab} \to \cat{C}/X$ has a left adjoint.

2. Consider the diagram:
\[
\xymatrix{
(\cat{C}/Y)_{\ab} \ar@<0.6ex>[d]^{U_Y} \ar@<0.6ex>[r]^{f^*} & (\cat{C}/X)_{\ab} \ar@<0.6ex>[d]^{U_X} \\
\cat{C}/Y \ar@<0.6ex>[u]^{Ab_Y} \ar@<0.6ex>[r]^{f^*} & \cat{C}/X \ar@<0.6ex>[l]^{f_!} \ar@<0.6ex>[u]^{Ab_X} \\
}
\]
where the abelianizations exist, by part \ref{AbExistPart}. The right adjoints commute.

Since $(\cat{C}/Y)_{\ab}$ and $(\cat{C}/X)_{\ab}$ are locally $\ka$-presentable, it suffices to show that $f^* \colon (\cat{C}/Y)_{\ab} \to (\cat{C}/X)_{\ab}$ preserves limits and $\ka$-filtered colimits to guarantee the existence of a left adjoint (by the adjoint functor theorem for locally presentable categories). Since $U_X \colon (\cat{C}/X)_{\ab} \to \cat{C}/X$ creates limits and $\ka$-filtered colimits, by \hyperref[CreateFiltColim]{\ref*{AbExists} (\ref*{CreateFiltColim})}, it suffices to show that $U_X f^*$ preserves limits and $\ka$-filtered colimits.

Now $U_X f^* = f^* U_Y$ is a composite of right adjoints and thus preserves limits. Moreover, $U_Y$ preserves $\ka$-filtered colimits, hence it suffices to show that $f^* \colon \cat{C}/Y \to \cat{C}/X$ preserves $\ka$-filtered colimits. Since the forgetful functor $\cat{C}/X \to \cat{C}$ creates colimits, it suffices to show that the composite $\cat{C}/Y \ral{f^*} \cat{C}/X \to \cat{C}$ preserves $\ka$-filtered colimits. This holds, since $\ka$-filtered colimits and finite limits commute in $\cat{C}$, as $\cat{C}$ is locally $\ka$-presentable.
\end{proof}

We will use the following basic facts.

\begin{lemma} \label{Additive}
\begin{enumerate}
\item In a preadditive category $\cat{A}$, finite products are canonically biproducts; likewise, finite coproducts are canonically biproducts. In particular, $\cat{A}$ is additive if and only if it has finite products (or equivalently, it has finite coproducts).
\item A functor $F \colon \cat{A} \to \cat{B}$ between preadditive categories with finite powers is additive if and only if it preserves finite powers (or equivalently, it preserves finite copowers). If moreover $\cat{A}$ is additive, then $F$ is additive if and only if it preserves finite products (or equivalently, it preserves finite coproducts).
\item If a category $\cat{C}$ has finite powers, then the category $\cat{C}_{\ab}$ is preadditive. If moreover $\cat{C}$ has finite products, then $\cat{C}_{\ab}$ is additive.
\item \label{InducedFctAb} Let $\cat{C}$ and $\cat{D}$ be categories with finite powers, and let $F \colon \cat{C} \to \cat{D}$ be a functor which preserves finite powers. Then $F$ naturally induces an additive functor $F \colon \cat{C}_{\ab} \to \cat{D}_{\ab}$ obtained by applying $F$ to objects and structure maps. For example, the addition structure map $\mu_{FX}$ of $FX$ is the composite $FX \x FX \cong F(X \x X) \ral{F \mu_X} FX$.
\end{enumerate}
\end{lemma}

\begin{proof}
See \cite{Barr02}*{Chapter 2, \S 1}, along with a straightforward verification.
\end{proof}

\begin{corollary}
Any left adjoint or right adjoint between additive categories is additive.
\end{corollary}

\begin{definition} \label{def:PassesToAb}
We say that a functor $F \colon \cat{C} \to \cat{D}$ \Def{passes to abelian group objects} if there is an additive functor $F \colon \cat{C}_{\ab} \to \cat{D}_{\ab}$ making the diagram:
\[
\xymatrix{
\cat{C}_{\ab} \ar[d]_U \ar[r]^F & \cat{D}_{\ab} \ar[d]^U \\
\cat{C} \ar[r]^F & \cat{D} \\
}
\]
commute (up to natural isomorphism). In other words, given any abelian group object $A$ in $\cat{C}$, the object $FA$ in $\cat{D}$ can be endowed with the structure of an abelian group object, functorially in $A$.

We say that $F$ \Def{passes to Beck modules} if for every object $c$ of $\cat{C}$, the functor between slice categories $F \colon \cat{C}/c \to \cat{D}/Fc$ passes to abelian group objects.
\end{definition}

\begin{lemma}
Let $\cat{C}$ and $\cat{D}$ be categories with finite powers.
\begin{enumerate}
\item A functor $F \colon \cat{C} \to \cat{D}$ passes to abelian group objects if and only if it preserves finite powers of objects in the (essential) image of $U \colon \cat{C}_{\ab} \to \cat{C}$, i.e., those objects of $\cat{C}$ that can be endowed with an abelian group object structure.
\item If $F$ passes to abelian group objects, then the functor $F \colon \cat{C}_{\ab} \to \cat{D}_{\ab}$ is naturally isomorphic to the functor obtained by applying $F$ to the objects and structure maps.
\end{enumerate}
\end{lemma}

\begin{proof}
1. The ``if'' direction follows from the argument in \hyperref[InducedFctAb]{\ref*{Additive} (\ref*{InducedFctAb})}. The ``only if'' direction follows from the fact that $U \colon \cat{C}_{\ab} \to \cat{C}$ creates limits, and an additive functor $F \colon \cat{C}_{\ab} \to \cat{D}_{\ab}$ preserves finite powers.

2. Recall that the structure maps of an abelian group object $A$ are themselves maps in $\cat{C}_{\ab}$ (which would not be true, say, for group objects). By the interchange law, the addition structure map $\mu_{FA} \colon FA \x FA \to FA$ must be that induced by $F \mu_A$, and likewise for the remaining structure maps.
\end{proof}

\begin{example}
If $F$ preserves finite powers of objects admitting a map from the terminal object, then $F$ passes to abelian group objects. Likewise, if $F$ preserves limits of the form $E \x_c E \x_c \ldots \x_c E$ for every split epimorphism $p \colon E \to c$ in $\cat{C}$, then $F$ passes to Beck modules. Note that for a Beck module $p \colon E \to c$ over $c$, the projection map $p$ is a split epimorphism, because of the zero section $e \colon c \to E$.
\end{example}

\begin{proposition} \label{InducedAdjAb}
Let $F \colon \cat{C} \rla \cat{D} \colon G$ be an adjunction, and consider the induced functor $G \colon \cat{D}_{\ab} \to \cat{C}_{\ab}$ on abelian group objects.
\begin{enumerate}
\item \label{AdjLocPres} If $\cat{C}$ and $\cat{D}$ are locally presentable, then $G \colon \cat{D}_{\ab} \to \cat{C}_{\ab}$ has a left adjoint $\ind{F} \colon \cat{C}_{\ab} \to \cat{D}_{\ab}$.
\item \label{PassesToAb} If $F \colon \cat{C} \to \cat{D}$ passes to abelian group objects, then the induced functor $F \colon \cat{C}_{\ab} \to \cat{D}_{\ab}$ is left adjoint to $G \colon \cat{D}_{\ab} \to \cat{C}_{\ab}$. In particular, the two functors $F$ commute with the forgetful functors $U$, i.e., there is a natural isomorphism $F U = U F \colon \cat{C}_{\ab} \to \cat{D}$, as illustrated in the diagram:
\[
\xymatrix @R=3pc @C=3pc {
\cat{C} \ar@<-0.6ex>[d]_{F} \ar@<0.6ex>[r]^-{Ab} & \cat{C}_{\ab} \ar@<0.6ex>[l]^-{U} \ar@<-0.6ex>[d]_{\ind{F} = F} \\
\cat{D} \ar@<-0.6ex>[u]_{G} \ar@<0.6ex>[r]^{Ab} & \cat{D}_{\ab}. \ar@<0.6ex>[l]^{U} \ar@<-0.6ex>[u]_{G}
}
\]
\end{enumerate}
In view of part \ref{PassesToAb}, we will say that the left adjoint $\ind{F} \colon \cat{C}_{\ab} \to \cat{D}_{\ab}$ is \emph{induced} by $F \colon \cat{C} \to \cat{D}$, whether or not $F$ passes to abelian group objects.
\end{proposition}

\begin{proof}
1. Both $\cat{C}_{\ab}$ and $\cat{D}_{\ab}$ are locally presentable, by \ref{AbExists}, and $G \colon \cat{D}_{\ab} \to \cat{C}_{\ab}$ preserves limits. Thus it suffices to show that this functor is accessible, i.e., preserves $\ka$-filtered colimits for some regular cardinal $\ka$.

Let $\cat{C}$ be locally $\ka_1$-presentable and let $\cat{D}$ be locally $\ka_2$-presentable. By the adjoint functor theorem for locally presentable categories, the right adjoint $G \colon \cat{D} \to \cat{C}$ preserves $\ka_3$-filtered colimits for some regular cardinal $\ka_3$. The induced functor $G \colon \cat{D}_{\ab} \to \cat{C}_{\ab}$ makes the diagram
\[
\xymatrix{
\cat{D}_{\ab} \ar[d]_{U} \ar[r]^G & \cat{C}_{\ab} \ar[d]^{U} \\
\cat{D} \ar[r]^G & \cat{C} \\
}
\]
commute. By \ref{AbExists}, $U \colon \cat{C}_{\ab} \to \cat{C}$ creates $\ka_1$-filtered colimits and $U \colon \cat{D}_{\ab} \to \cat{D}$ preserves (in fact creates) $\ka_2$-filtered colimits. Therefore, $G \colon \cat{D}_{\ab} \to \cat{C}_{\ab}$ preserves $\ka$-filtered colimits for any regular cardinal $\ka$ greater than $\ka_1$, $\ka_2$, and $\ka_3$.

2. Consider abelian group objects $c$ in $\cat{C}_{\ab}$ and $d$ in $\cat{D}_{\ab}$. We want to exhibit a natural isomorphism:
\[
\xymatrix{
\Hom_{\cat{D}_{\ab}}(Fc,d) \ar@{^{(}->}[d] \ar@{==}[r]^{?} & \Hom_{\cat{C}_{\ab}}(c,Gd) \ar@{^{(}->}[d] \\
\Hom_{\cat{D}}(UFc,Ud) \ar@{=}[r] & \Hom_{\cat{C}}(Uc,UGd) \\
}
\]
which says that a map $f \colon Fc \to d$ in $\cat{D}$ respects the abelian group object structure if and only if its adjunct $f' \colon c \to Gd$ in $\cat{C}$ does. This holds, since the diagram
\[
\xymatrix{
F(c \x c) \ar[dr]_{F \mu_c} \ar[r]^-{\cong} & Fc \x Fc \ar[d]^{\mu_{Fc}} \ar[r]^-{f \x f} & d \x d \ar[d]^{\mu_d} \\
& Fc \ar[r]^f & d \\
}
\]
commutes if and only if its adjoint diagram:
\[
\xymatrix{
c \x c \ar[d]_{\mu_c} \ar[r]^-{f' \x f'} & Gd \x Gd \ar[d]_{\mu_{Gd}} & G(d \x d) \ar[dl]^{G \mu_d} \ar[l]_-{\cong} \\
c \ar[r]^{f'} & Gd & \\
}
\]
commutes, and likewise for the remaining structure maps.
\end{proof}

\subsection{Quasi-algebraic Beck modules}

Our next goal is to show that if $\cat{C}$ is quasi-algebraic, then its categories of Beck modules $(\cat{C}/X)_{\ab}$ are also quasi-algebraic, and therefore have nice simplicial objects.

\begin{proposition} \label{AlgebraicSlice}
Let $\cat{C}$ be a quasi-algebraic category and $X$ an object of $\cat{C}$. Then the slice category $\cat{C}/X$ is quasi-algebraic.
\end{proposition}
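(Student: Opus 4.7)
The plan is to verify the two defining conditions of an algebraic category (Definition \ref{AlgCat}) for $\CC/X$: cocompleteness, and the existence of a set of small projective generators.

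First, cocompleteness is essentially immediate. The ``source'' functor $\CC/X \to \CC$ sending $(Y \ral{y} X)$ to $Y$ creates colimits (this was already invoked in the proof of \ref{EpiSlice}): given a diagram $\{Y_i \ral{y_i} X\}$ in $\CC/X$, the colimit is $\colim_i Y_i \to X$ where the structure map is the unique map induced by the cocone $\{y_i\}$. Since $\CC$ is cocomplete, this produces all small colimits in $\CC/X$.

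Next, let $S$ be a set of small projective generators for $\CC$. I propose the candidate generator set
\[
S_X \dfn \left\{ (P \ral{p} X) \mid P \in S,\ p \in \Hom_{\CC}(P,X) \right\},
\]
which is a set since $S$ is a set and each $\Hom_{\CC}(P,X)$ is a set. Each object of $S_X$ is projective in $\CC/X$ by \ref{ProjSlice}(1). To see that $S_X$ generates, take any $(Y \ral{y} X)$ and choose a regular epi $\pi \colon \coprod_i P_i \surj Y$ in $\CC$ with $P_i \in S$, which exists because $S$ generates $\CC$. Then $\pi$ defines a map $(\coprod_i P_i \ral{y\pi} X) \to (Y \ral{y} X)$ in $\CC/X$; since the source functor creates colimits, the domain is exactly the coproduct in $\CC/X$ of the objects $(P_i \ral{y\pi \io_i} X)$, all of which lie in $S_X$. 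The map is a regular epi in $\CC/X$ by \ref{EpiSlice}.

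Finally, the main (though still routine) check is smallness of each $(P \ral{p} X) \in S_X$. Given a filtered diagram $\{Y_i \ral{y_i} X\}$ in $\CC/X$ with colimit $(\colim_i Y_i \ral{[y_i]} X)$ computed as above, I would analyze the comparison map
\[
\colim_i \Hom_{\CC/X}(P \ral{p} X, Y_i \ral{y_i} X) \longrightarrow \Hom_{\CC/X}(P \ral{p} X, \colim_i Y_i \ral{[y_i]} X)
\]
by observing that $\Hom_{\CC/X}(P \ral{p} X, Y_i \ral{y_i} X)$ is the fiber of $(y_i)_* \colon \Hom_{\CC}(P,Y_i) \to \Hom_{\CC}(P,X)$ over $p$, and these fibers behave well under filtered colimits of sets. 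Concretely, surjectivity follows because any $g \colon P \to \colim_i Y_i$ factors as $g = \io_k g_k$ for some $k$ by smallness of $P$ in $\CC$, and the condition $[y_i] g = p$ then forces $y_k g_k = p$; injectivity follows similarly by passing further along the diagram. The conclusion is that $(P \ral{p} X)$ is small in $\CC/X$, completing the verification that $\CC/X$ is algebraic. The only mildly subtle point is this last fiberwise smallness argument, and everything else is formal.
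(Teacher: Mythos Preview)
Your proof is correct and follows essentially the same approach as the paper: cocompleteness via the source functor creating colimits, and the candidate generator set $S_X = \{P \ral{p} X \mid P \in S,\ p \in \Hom_{\CC}(P,X)\}$ verified using \ref{ProjSlice} and \ref{EpiSlice}. The paper dismisses smallness as ``a straightforward verification,'' whereas you spell out the fiberwise argument explicitly, which is fine and correct.
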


\begin{proof}
1. $\cat{C}/X$ has small colimits, since they are created by the forgetful functor $\cat{C}/X \to \cat{C}$.

2. Let $\cat{G}$ be a set of finitely presentable projective generators for $\cat{C}$. Then
\[
\left\{ P \ral{p} X \mid P \in \cat{G}, p \in \Hom_{\cat{C}}(P,X) \right\}
\]
is a set of finitely presentable projective generators for $\cat{C}/X$. Presentability is a straightforward verification; the rest follows from \ref{ProjSlice}, \ref{EpiSlice}, and the fact that $(\amalg P_i) \to X$ is the coproduct $\amalg (P_i \to X)$ in $\cat{C}/X$. (By the same argument, if $\cat{C}$ has enough projectives, then so does $\cat{C}/X$.)
\end{proof}

\begin{proposition} \label{AbCVariety}
If $\cat{C}$ is a many-sorted finitary variety (resp. quasivariety), then so is $\cat{C}_{\ab}$.

In particular, if $\cat{C}$ is algebraic (resp. quasi-algebraic), then so is $\cat{C}_{\ab}$.
\end{proposition}

\begin{proof}
Let $\cat{C} \cong \Alg(\Si,E,I)$ be an equational presentation of $\cat{C}$, where $\Si$ is an $S$-sorted signature for some set $S$. We treat the case of varieties simultaneously, by allowing the set of implications $I$ to be empty.

Objects of $\cat{C}_{\ab}$ have the underlying $S$-graded set of their underlying object in $\cat{C}$, equipped with the additional structure maps $\mu \colon X \x X \to X$, $e \colon \ast \to X$, and $\io \colon X \to X$ satisfying the conditions of associativity and so on, and the conditions that the structure maps be maps in $\cat{C}$. Since the forgetful functor $\cat{C} \to \Set^S$ creates limits, the structure maps amount to maps $\mu_s \colon X_s \x X_s \to X_s$, $e_s \colon \ast_s \to X_s$, and $\io_s \colon X_s \to X_s$ for each sort $s \in S$, making $X_s$ into an abelian group. Define the $S$-sorted signature
\[
\Si' \dfn \Si \cup \bigcup_{s \in S} \{ \mu_s, e_s, \io_s \}
\]
where the additional operations $\mu_s, e_s, \io_s$ have arities $s \x s \to s$, $\to s$, and $s \to s$ respectively. Let $E_{\ab,s}$ denote the set of equations for $\mu_s, e_s, \io_s$ as abelian group structure maps, as in \ref{AbGpsVar}, and let $E_{\ab} \dfn \bigcup_{s \in S} E_{\ab,s}$. Recall that $\cat{C}$ is a full subcategory of $\Alg \Si$, that is, maps in $\cat{C}$ are maps of underlying $S$-graded sets that commute with all operations. For each operation symbol $\si \in \Si$ of arity $s_1 \x \ldots \x s_n \to s$, consider the condition that $\mu \colon X \x X \to X$ commute with $\si$:
\[
\xymatrix{
(X \x X)_{s_1} \x \ldots \x (X \x X)_{s_n} \ar[d]_{\mu_{s_1} \x \ldots \x \mu_{s_n}} \ar[r]^-{\si_{X \x X}} & (X \x X)_s \ar[d]^{\mu_s} \\
X_{s_1} \x \ldots \x X_{s_n} \ar[r]^-{\si_X} & X_s. \\
}
\]
Let $E_{\si}$ denote the set containing the three equations expressing the compatibility of $\si$ with $\mu$, $e$, and $\io$ respectively, and let $E_{\mathrm{struc}} \dfn \bigcup_{\si \in \Si} E_{\si}$. With the set of equations $E' \dfn E \cup E_{\ab} \cup E_{\mathrm{struc}}$, we obtain the equational presentation $C_{\ab} \cong \Alg (\Si', E', I)$.
\end{proof}

The proof of  \ref{AbCVariety} does not provide an explicit set of finitely presentable projective generators for $\cat{C}_{\ab}$. For the record, we describe such a set below in \ref{GeneratorsAbC}.

\begin{lemma} \label{AdjRegEpi}
Assume $\cat{C}$ is regular and $U \colon \cat{C}_{\ab} \to \cat{C}$ has a left adjoint. If a map $f \colon X \surj UB$ is a regular epimorphism in $\cat{C}$, then its adjunct map $f' \colon Ab X \to B$ is a regular epimorphism in $\cat{C}_{\ab}$. In particular, the counit $Ab U A \surj A$ is always a regular epimorphism.
\end{lemma}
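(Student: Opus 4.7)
The plan is to reduce the claim to a fact already established in the excerpt, namely that $U \colon Ab(\CC) \to \CC$ reflects regular epis (Proposition \ref{ReflectRegEpi}). Thus it suffices to show that $U(f^{\sharp})$ is a regular epi in $\CC$.

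For this, I would start from the triangle identity characterizing the adjunct: writing $\eta_X \colon X \to U Ab\, X$ for the unit, we have $U(f^{\sharp}) \circ \eta_X = f$. Since $f$ is assumed to be a regular epi, the map $U(f^{\sharp})$ appears as the ``outer'' factor of a regular epi. In a regular category, such a factor is automatically a regular epi: if we factor $U(f^{\sharp}) = m \circ e$ with $e$ a regular epi and $m$ a mono (using the regular epi–mono factorization available by regularity of $\CC$), then $f = m \circ (e \circ \eta_X)$ exhibits the regular epi $f$ factoring through the mono $m$, which forces $m$ to be an isomorphism (regular epis are orthogonal to monos). Hence $U(f^{\sharp})$ is a regular epi.

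Applying Proposition \ref{ReflectRegEpi} then yields that $f^{\sharp}$ itself is a regular epi in $Ab(\CC)$, giving the first statement. For the second statement, note that the counit $\epsilon_A \colon Ab\, UA \to A$ is, by definition, the adjunct of $\id_{UA} \colon UA \to UA$, which is trivially a regular epi; the first part then immediately yields that $\epsilon_A$ is a regular epi in $Ab(\CC)$.

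The only step requiring any genuine argument is the orthogonality observation that in a regular category $gh$ being a regular epi forces $g$ to be a regular epi; everything else is a direct invocation of the adjunction identity and of Proposition \ref{ReflectRegEpi}. I do not expect any real obstacle, since the regular epi–mono factorization system needed for that step is precisely what regularity of $\CC$ provides.
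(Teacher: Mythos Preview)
Your proof is correct and follows essentially the same route as the paper's: both reduce to showing $U(f^{\sharp})$ is a regular epi via the triangle identity $f = U(f^{\sharp}) \circ \eta_X$ and the fact that the second factor of a regular epi is again a regular epi in a regular category, then invoke that $U$ reflects regular epis. The only cosmetic difference is that the paper cites Borceux for the ``second factor'' fact, whereas you spell out the orthogonality argument directly.
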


\begin{proof}
Recall that $Ab X \to B$ is a regular epimorphism in $\cat{C}_{\ab}$ if and only if $U Ab X \to UB$ is a regular epimorphism in $\cat{C}$. The regular epimorphism $f$ factors as $f = (Uf') \circ \eta_X \colon X \to U Ab X \to UB$, which implies that $Uf'$ is a regular epimorphism since $\cat{C}$ is regular \cite{Borceux94v1}*{Corollary 2.1.5 (2)}.
\end{proof}

\begin{remark}
The converse is false in general. For example, take $\cat{C} = \Set$, $X = \{*\}$, $Y = \Z$, and $f(*) = 1$. The map $f$ is far from being a regular epimorphism (i.e., a surjection), but its adjunct $f' \colon Ab(*) = \Z \to \Z$ is a regular epimorphism, even an isomorphism.
\end{remark}

\begin{lemma} \label{ProjAbC}
Assume $\cat{C}$ is regular and has enough projectives, and $U \colon \cat{C}_{\ab} \to \cat{C}$ has a left adjoint. Then an object of $\cat{C}_{\ab}$ is projective if and only if it is a retract of $Ab P$ for some projective $P$ of $\cat{C}$.
\end{lemma}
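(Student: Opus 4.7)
The plan is to prove the two directions separately, using the adjunction $Ab \dashv U$ together with the fact that $U$ preserves regular epis (Corollary \ref{PreserveRegEpi}) and the preceding Lemma \ref{AdjRegEpi}.

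For the ``if'' direction, I would first show that $Ab P$ is projective in $Ab(\CC)$ whenever $P$ is projective in $\CC$. Given a regular epi $f \colon B \surj B'$ in $Ab(\CC)$, the adjunction gives a natural commutative square
\[
\xymatrix{
\Hom_{Ab(\CC)}(Ab P, B) \ar[d]_{\cong} \ar[r]^{f_*} & \Hom_{Ab(\CC)}(Ab P, B') \ar[d]^{\cong} \\
\Hom_{\CC}(P, UB) \ar[r]_{(Uf)_*} & \Hom_{\CC}(P, UB').
}
\]
Since $U$ preserves regular epis, $Uf$ is a regular epi in $\CC$, and since $P$ is projective in $\CC$, the bottom (hence top) map is a surjection. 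Then I would invoke the standard fact that retracts of projectives are projective to conclude that $A$ is projective.

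For the ``only if'' direction, suppose $A$ is projective in $Ab(\CC)$. Since $\CC$ has enough projectives, choose a regular epi $p \colon P \surj UA$ with $P$ projective in $\CC$. By Lemma \ref{AdjRegEpi}, the adjunct map $p^{\sharp} \colon Ab P \to A$ is a regular epi in $Ab(\CC)$. Applying the projectivity of $A$ to this regular epi yields a section $s \colon A \to Ab P$ with $p^{\sharp} s = \id_A$, exhibiting $A$ as a retract of $Ab P$.

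No serious obstacle is anticipated: the proof is essentially just a two-line diagram chase in each direction, once the adjunction and Lemma \ref{AdjRegEpi} are available. The only subtlety is invoking Corollary \ref{PreserveRegEpi} at the right moment to translate regular epis between $Ab(\CC)$ and $\CC$.
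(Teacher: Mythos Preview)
Your proposal is correct and essentially identical to the paper's proof: both directions use the adjunction $Ab \dashv U$, Corollary \ref{PreserveRegEpi} for the ``if'' direction, and Lemma \ref{AdjRegEpi} for the ``only if'' direction, with the same diagram chase and section argument. The only difference is cosmetic---the paper phrases the ``if'' direction as a lifting problem rather than drawing out the hom-set square.
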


\begin{proof}
($\La$) Trying to lift a map $Ab P \to B$ along a regular epimorphism $A \surj B$ is the same as trying to lift the adjunct map:
\[
\xymatrix{
& P \ar@{-->}[dl] \ar[d] \\
UA \ar@{->>}[r] & UB. \\
}
\]
The bottom map is a regular epimorphism since $U$ preserves them, and thus the lift exists. Therefore $Ab P$ is projective, and a retract of a projective is projective.

($\Ra$) Let $Q$ be a projective in $\cat{C}_{\ab}$. Since $\cat{C}$ has enough projectives, there is a projective $P$ of $\cat{C}$ with a regular epimorphism $P \surj UQ$. Take its adjunct map $Ab P \surj Q$, which is still a regular epimorphism by \ref{AdjRegEpi}. Lifting the identity of $Q$ along that regular epimorphism exhibits $Q$ as a retract of $Ab P$.
\end{proof}

\begin{proposition} \label{GeneratorsAbC}
Let $\cat{C}$ be a quasi-algebraic category and $S$ a set of finitely presentable projective generators for $\cat{C}$. Then $\left\{ Ab P \mid P \in S \right\}$ is a set of finitely presentable projective generators for $\cat{C}_{\ab}$.
\end{proposition}

\begin{proof}
Since $\cat{C}$ is locally finitely presentable, $U \colon \cat{C}_{\ab} \to \cat{C}$ has a left adjoint $Ab \colon \cat{C} \to \cat{C}_{\ab}$, by \hyperref[AbExistsPart]{\ref*{AbExists} (\ref*{AbExistsPart})}. Each $Ab P$ is finitely presentable, by \hyperref[CreateFiltColim]{\ref*{AbExists} (\ref*{CreateFiltColim})}, and projective, by \ref{ProjAbC}. Let us show that they form a family of generators. For any object $X$ of $\cat{C}_{\ab}$, take a regular epimorphism $\amalg P_i \surj UX$ from a coproduct of generators in $S$. Then the adjunct map $\amalg Ab(P_i) = Ab \left( \amalg P_i \right) \surj X$ is a regular epimorphism in $\cat{C}_{\ab}$, by \ref{AdjRegEpi}. (By the same argument, if $\cat{C}$ has enough projectives, then so does $\cat{C}_{\ab}$.)
\end{proof}

\begin{remark}
Proposition \ref{GeneratorsAbC} provides an alternate proof of Proposition \ref{AbCVariety} which does not rely on the universal-algebraic characterization theorems. If $\cat{C}$ is quasi-algebraic, then $\cat{C}$ is in particular locally finitely presentable. By \hyperref[AbCLocPres]{\ref*{AbExists} (\ref*{AbCLocPres})}, $\cat{C}_{\ab}$ is also locally finitely presentable, in particular cocomplete. By \ref{GeneratorsAbC}, $\cat{C}_{\ab}$ has a set of finitely presentable projective generators, and is therefore quasi-algebraic. If moreover $\cat{C}$ is exact, then so is $\cat{C}_{\ab}$, by \cite{Barr02}*{Chapter 2, Theorem 2.3}, and thus $\cat{C}_{\ab}$ is algebraic, by \ref{AlgExact}.
\end{remark}

\subsection{The setup}

Putting the ingredients together, we obtain a good setup for Quillen cohomology. It is essentially an observation of Quillen \cite{Quillen67}*{\S II.5, (4) before Theorem 5}, which we state and prove in more detail.

\begin{proposition} \label{GoodSetupHQ}
Let $\cat{C}$ be a quasi-algebraic category and $X$ an object of $\cat{C}$. Then $\cat{C}/X$ and $(\cat{C}/X)_{\ab}$ are quasi-algebraic, and in the prolonged adjunction:
\[
\xymatrix{
s\cat{C}/X \ar@<0.6ex>[r]^-{Ab_X} & s (\cat{C}/X)_{\ab} \ar@<0.6ex>[l]^-{U_X} \\
}
\]
the right adjoint $U_X \colon s(\cat{C}/X)_{\ab} \to s\cat{C}/X$ creates (i.e., preserves and reflects) fibrations and weak equivalences. In particular, the prolonged adjunction is a Quillen pair.
\end{proposition}

\begin{proof}
The abelianization $Ab_X \colon \cat{C}/X \to (\cat{C}/X)_{\ab}$ exists, by \ref{AbExist}. Both $\cat{C}/X$ and $(\cat{C}/X)_{\ab}$ are quasi-algebraic, by \ref{AlgebraicSlice} and \ref{AbCVariety}; in particular, both have nice simplicial objects. By definition, a map $f \colon M_{\bu} \to N_{\bu}$ in $s (\cat{C}/X)_{\ab}$ is a fibration (resp. weak equivalence) if the map of simplicial sets:
\begin{equation} \label{eq:HomFromProj}
\xymatrix{
\Hom_{(\cat{C}/X)_{\ab}}(P', M_{\bu}) \ar[r]^{f_*} & \Hom_{(\cat{C}/X)_{\ab}}(P', N_{\bu}) \\
}
\end{equation}
is so, for every projective $P'$ in $(\cat{C}/X)_{\ab}$. By \ref{GeneratorsAbC}, it suffices that the condition hold for projectives of the form $P' = Ab_X P$, where $P$ is a projective in $\cat{C}/X$. In that case, the map \eqref{eq:HomFromProj} is a fibration (resp. weak equivalence) of simplicial sets if and only if its adjunct map:
\[
\xymatrix{
\Hom_{\cat{C}/X}(P, U_X M_{\bu}) \ar[r]^{U_X f_*} & \Hom_{\cat{C}/X}(P, U_X N_{\bu}) \\
}
\]
is so. This holds for every projective $P$ in $\cat{C}/X$ if and only if $U_X f \colon U_X M_{\bu} \to U_X N_{\bu}$ is a fibration (resp. weak equivalence) in $s (\cat{C}/X)$, or equivalently in $(s \cat{C})/X$, by \ref{SameModelStruc}.
\end{proof}


The setup above is not quite enough to work with Quillen cohomology. In \cite{Quillen67}*{\S II.5}, Quillen describes additional assumptions on the homotopy category $\Ho (s\cat{C}/X_{\bu})_{\ab}$, which are satisfied for example if $\cat{C}$ has abelian Beck modules, i.e., the category $(\cat{C}/X)_{\ab}$ is abelian for every object $X$. One condition guaranteeing abelian Beck modules is exactness \cite{Barr02}*{Chapter 2, Theorem 2.4}, though this condition is not necessary, as we show below.

\begin{example} \label{NonAbelBeck}
A quasi-algebraic category does not necessarily have abelian Beck modules. For example, take the category $\tfAb$ of torsion-free abelian groups, viewed as a full subcategory of abelian groups, with inclusion $\io \colon \tfAb \to \Ab$.

Let us show that $\tfAb$ does not have abelian Beck modules. Since $\io$ preserves limits, a Beck module $E \to G$ over a torsion-free abelian group $G$ is in particular a Beck module viewed in $\Ab$, i.e., a direct sum $G \op M \surj G$. The only additional condition is that $G \op M$ be torsion-free, which happens if and only if $M$ itself is torsion-free. Hence, for every object $G$, we have $(\tfAb/G)_{\ab} \cong \tfAb$, which is not an abelian category.
\end{example}


\begin{example} \label{NonExact}
A quasi-algebraic category with abelian Beck modules is not necessarily exact. For example, take the category $\redCom$ of reduced commutative rings, viewed as a full subcategory of all commutative (unital) rings, with inclusion $\io \colon \redCom \to \Com$.

Let us show that $\redCom$ has abelian Beck modules. A Beck module over a reduced commutative ring $R$ is in particular a Beck module viewed in $\Com$, i.e., a square zero extension $R \op M \surj R$ with multiplication $(r,m)(r',m') = (rr', rm' + mr')$, where the left and right actions of $R$ on $M$ coincide. The only additional condition is for $R \op M$ to be a reduced ring, which happens if and only if $M$ is zero, since the nilradical is $\Nil(R \op M) = M$. Hence for every object $R$, we have $(\redCom/R)_{\ab} \cong 0$, which is an abelian category.
\end{example}

In short, a quasi-algebraic category has most of the ingredients for Quillen cohomology. An algebraic category (i.e., a quasi-algebraic category which is exact) has all the ingredients.


\section{Effect of an adjunction} \label{sec:Effect}

In this section, we investigate the main question: What does an adjunction $F \colon \cat{C} \rla \cat{D} \colon G$ do to Quillen (co)homology?

\textbf{Assumptions for Section \ref{sec:Effect}:} $\cat{C}$ and $\cat{D}$ are quasi-algebraic categories with abelian Beck modules. This is satisfied in particular when $\cat{C}$ and $\cat{D}$ are algebraic categories.

\subsection{Effect on Beck modules}

First, let us see how an adjunction passes to slice categories. There are two versions, depending if one starts with a ground object in $\cat{C}$ or in $\cat{D}$.  A straightforward verification yields the following proposition.

\begin{proposition} \label{AdjSlice}
\begin{enumerate}
\item For every object $c$ in $\cat{C}$, there is an induced adjunction:
\begin{equation} \label{AdjSliceC}
\xymatrix{
\cat{C}/c \ar@<0.6ex>[r]^-{F} & \cat{D}/Fc \ar@<0.6ex>[l]^-{\eta_c^* G} \\
}
\end{equation}
where $\eta_c \colon c \to GFc$ is the unit map.
\item For every object $d$ in $\cat{D}$, there is an induced adjunction:
\[
\xymatrix{
\cat{C}/Gd \ar@<0.6ex>[r]^-{\ep_{d!} F} & \cat{D}/d \ar@<0.6ex>[l]^-{G} \\
}
\]
where $\ep_d \colon FGd \to d$ is the counit map.
\end{enumerate}
\end{proposition}

\begin{proposition} \label{AdjBeckMod}
\begin{enumerate}
\item For every object $c$ in $\cat{C}$, there is an induced adjunction on Beck modules:
\[
\xymatrix{
(\cat{C}/c)_{\ab} \ar@<0.6ex>[r]^-{\ind{F}} & (\cat{D}/Fc)_{\ab}. \ar@<0.6ex>[l]^-{\eta_c^* G}
}
\]
\item For every object $d$ in $\cat{D}$, there is an induced adjunction on Beck modules:
\[
\xymatrix{
(\cat{C}/Gd)_{\ab} \ar@<0.6ex>[r]^-{\ep_{d \push} \ind{F}} & (\cat{D}/d)_{\ab}. \ar@<0.6ex>[l]^-{G}
}
\]
\item If $F \colon \cat{C} \to \cat{D}$ passes to Beck modules, then the induced left adjoint on Beck modules is $\ind{F} = F \colon (\cat{C}/c)_{\ab} \to (\cat{D}/Fc)_{\ab}$, the functor obtained by applying $F$ to the objects and structure maps.
\end{enumerate}
\end{proposition}

\begin{proof}
1. By \ref{InducedAdjAb} and the fact that $\cat{C}$ and $\cat{D}$ are locally presentable, the adjunction \eqref{AdjSliceC} induces such an adjunction on abelian group objects.

2. Using the previous part, consider the two adjunctions:
\[
\xymatrix{
(\cat{C}/Gd)_{\ab} \ar@<0.6ex>[r]^-{\ind{F}} & (\cat{D}/FGd)_{\ab} \ar@<0.6ex>[r]^-{\ep_{d \push}} \ar@<0.6ex>[l]^-{\eta^*_{Gd}G} & (\cat{D}/d)_{\ab}. \ar@<0.6ex>[l]^-{\ep_{d}^*} \\
}
\]
The result follows from the natural isomorphism $\eta_{Gd}^* G \ep_d^* \cong G \colon (\cat{D}/d)_{\ab} \to (\cat{C}/Gd)_{\ab}$, which follows from the equality $\id_{Gd} = G \ep_d \circ \eta_{Gd} \colon Gd \to GFGd \to Gd$.

3. This follows from \ref{InducedAdjAb}. 
\end{proof}

\subsection{Effect on abelian cohomology} \label{sec:AdjHA}

Before introducing any homotopical algebra, let us study the problem at the level of homological algebra. We want to describe the effect of the adjunction on abelian cohomology. As we have seen in \ref{AdjBeckMod}, there are two induced adjunctions, depending if one starts with a ground object in $\cat{C}$ or in $\cat{D}$.

\renewcommand\thesubsubsection{\Roman{subsubsection}}
\subsubsection{Ground object in \texorpdfstring{$\cat{C}$}{C}}

Pick a ground object $c$ in $\cat{C}$. The induced adjunction on Beck modules fits into the diagram:
\begin{equation} \label{DiagAdjHAc}
\xymatrix @R=3pc @C=3pc {
\cat{C}/c \ar@<-0.6ex>[d]_{F} \ar@<0.6ex>[r]^-{Ab_c} & (\cat{C}/c)_{\ab} \ar@<0.6ex>[l]^-{U_c} \ar@<-0.6ex>[d]_{\ind{F}} \\
\cat{D}/Fc \ar@<-0.6ex>[u]_{\eta_c^* G} \ar@<0.6ex>[r]^{Ab_{Fc}} & (\cat{D}/Fc)_{\ab} \ar@<0.6ex>[l]^{U_{Fc}} \ar@<-0.6ex>[u]_{\eta_c^* G}
}
\end{equation}
where the diagram of right adjoints commutes (strictly), and thus the diagram of left adjoints commutes as well. In particular, applying the left adjoints to $\id_c$, we obtain $\ind{F} Ab_c c = Ab_{Fc} Fc$. Take a module $N$ over $Fc$ and consider:
\begin{align} 
\HA^*(c; \eta_c^* G N) &= \Ext^*(Ab_c c, \eta_c^* G N) \notag \\
&= \Hy^* \Hom_{\Mod_c} (P_{\bu}, \eta_c^* G N) \notag \\
&= \Hy^* \Hom_{\Mod_{Fc}} (\ind{F} P_{\bu}, N) \label{ComputHAc}
\end{align}
where $P_{\bu} \to Ab_c c$ is a projective resolution. We want to compare this to:
\begin{align*}
\HA^*(Fc; N) &= \Ext^*(Ab_{Fc} Fc, N) \\
&= \Hy^* \Hom_{\Mod_{Fc}} (Q_{\bu}, N)
\end{align*}
where $Q_{\bu} \to Ab_{Fc} Fc$ is a projective resolution. Assume the induced left adjoint $\ind{F} \colon (\cat{C}/c)_{\ab} \to (\cat{D}/Fc)_{\ab}$ preserves projectives (which is the case for example when its right adjoint $\eta_c^* G$ preserves epimorphisms, i.e., is exact). Then $\ind{F} P_{\bu}$ is projective but is not a resolution of $\ind{F} Ab_c c$. However, the map factors as $\ind{F} P_{\bu} \inj Q_{\bu} \ral{\sim} \ind{F} Ab_c c = Ab_{Fc} Fc$ and the first map induces
\[
\Hom_{\Mod_{Fc}} (Q_{\bu},N) \to \Hom_{\Mod_{Fc}} (\ind{F} P_{\bu},N)
\]
which, upon passing to cohomology, induces a well-defined map. We sum up the argument in the following proposition.

\begin{proposition} 
If the left adjoint $F$ induces a functor $\ind{F}$ on Beck modules which preserves projectives, then we obtain a comparison map in abelian cohomology:
\begin{equation} \label{ComparMapHAc}
\HA^*(Fc; N) \to \HA^*(c; \eta_c^* G N).
\end{equation}
\end{proposition}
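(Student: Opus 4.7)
The plan is to extract the comparison map directly from the two $\Ext$ computations already carried out in the text just before the statement. Fix a projective resolution $P_\bu \to Ab_c c$ in $\Mod_c$ and a projective resolution $Q_\bu \to Ab_{Lc} Lc$ in $\Mod_{Lc}$. Because the square of right adjoints in (\ref{DiagAdjHAc}) commutes, so does the square of left adjoints, and in particular $L\, Ab_c c = Ab_{Lc}\, Lc$. Applying $L$ to $P_\bu \to Ab_c c$ therefore produces an augmented complex $LP_\bu \to Ab_{Lc}\, Lc$ in which each term $LP_i$ is projective, by our hypothesis on $L$. I do \emph{not} claim $LP_\bu$ is a resolution — it need not be, since $L$ need not be exact — but the augmentation lands in the same object as the augmentation of $Q_\bu$.

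Next I invoke the fundamental lemma of homological algebra: a chain map from a complex of projectives to a resolution, covering a given map on augmentations, exists and is unique up to chain homotopy. Applied to the identity of $Ab_{Lc}\, Lc$, this produces a chain map $\phi \colon LP_\bu \to Q_\bu$ over the identity, well-defined up to chain homotopy. Applying $\Hom_{\Mod_{Lc}}(-, N)$ to $\phi$ and passing to cohomology yields a map
\[
\Hy^* \Hom_{\Mod_{Lc}}(Q_\bu, N) \longrightarrow \Hy^* \Hom_{\Mod_{Lc}}(LP_\bu, N).
\]
The left side is $\HA^*(Lc; N)$ by definition. The right side, by the Beck-module adjunction $L \dashv \eta_c^* R$, equals $\Hy^* \Hom_{\Mod_c}(P_\bu, \eta_c^* R N) = \HA^*(c; \eta_c^* R N)$, precisely as in computation (\ref{ComputHAc}). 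This is the desired comparison map (\ref{ComparMapHAc}).

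The only point requiring care — and what I would flag as the main obstacle — is well-definedness, i.e.\ independence of all the choices made. Independence of the lift $\phi$ follows because the fundamental lemma delivers it uniquely up to chain homotopy, and $\Hom_{\Mod_{Lc}}(-, N)$ sends chain-homotopic maps to cochain-homotopic ones, hence to the same map on cohomology. Independence of the resolution $Q_\bu$ is the classical fact that any two projective resolutions are canonically chain-homotopy equivalent. Independence of $P_\bu$ uses the same fact together with the observation that $L$, as a left adjoint between the abelian categories of Beck modules, is additive and therefore preserves chain homotopies: a homotopy equivalence $P_\bu \simeq P'_\bu$ is sent to $LP_\bu \simeq LP'_\bu$, which is then compatible with the two lifts into $Q_\bu$ up to chain homotopy. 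All of this is bookkeeping, but it is what converts the paragraph preceding the statement into an actual well-defined natural map.
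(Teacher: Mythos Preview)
Your proof is correct and is essentially the same as the paper's argument, which is contained in the paragraph preceding the proposition: both produce a chain map $LP_\bu \to Q_\bu$ over the identity of $Ab_{Lc}Lc = L\,Ab_c c$ (the paper phrases this as a factorization $LP_\bu \to Q_\bu \ral{\sim} Ab_{Lc}Lc$, you as an application of the comparison lemma), apply $\Hom_{\Mod_{Lc}}(-,N)$, and use the adjunction $L \dashv \eta_c^* R$ to identify the target with $\HA^*(c;\eta_c^* RN)$. Your version is more explicit about well-definedness, but the underlying construction is identical.
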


Note that \eqref{ComputHAc} exhibits $\HA^*(c; \eta_c^* G N)$ as the derived functors of $\Hom_{\Mod_{Fc}}(-,N) \circ \ind{F}$ applied to $Ab_c c$. Since $\ind{F}$ sends projectives to projectives, we obtain a Grothendieck spectral sequence:
\[
E_2^{s,t} = \Ext^s \left( (L_t \ind{F})(Ab_c c), N \right) \Ra \HA^{s+t}(c; \eta_c^* G N)
\]
which is first quadrant, cohomologically graded. The comparison map \eqref{ComparMapHAc} is the edge morphism:
\[
\HA^s (Fc; N) = \Ext^s (\ind{F} Ab_c c, N) = E_2^{s,0} \surj E_{\infty}^{s,0} \inj \HA^s (c; \eta_c^* G N).
\]
If $\ind{F} \colon (\cat{C}/c)_{\ab} \to (\cat{D}/Fc)_{\ab}$ happens to be exact, then $\ind{F} P_{\bu}$ is a projective resolution of $\ind{F} Ab_c c = Ab_{Fc} Fc$ and the comparison map \eqref{ComparMapHAc} is an isomorphism.

\begin{remark}
Starting with a module $M$ over $c$, there is a map:
\begin{align*}
\Hom_{\Mod_c} (Ab_c c, M) \to &\Hom_{\Mod_{Fc}} (\ind{F} Ab_c c, \ind{F} M) \\
= &\Hom_{\Mod_{Fc}} (Ab_c c, \eta_c^* G \ind{F}M)
\end{align*}
given by applying $\ind{F}$, or equivalently, induced by the unit $M \to \eta_c^* G \ind{F} M$. One might want to compare $\HA^*(c;M)$ and $\HA^*(Fc; \ind{F} M)$, but they both naturally map to $\HA^*(c; \eta_c^* G \ind{F} M)$, respectively via the unit and the comparison map \eqref{ComparMapHAc}. There is no direct comparison.
\end{remark}

\subsubsection{Ground object in \texorpdfstring{$\cat{D}$}{D}}

Pick a ground object $d$ in $\cat{D}$. The induced adjunction on Beck modules fits into the diagram:
\begin{equation} \label{DiagAdjHAd}
\xymatrix @R=3pc @C=3pc {
\cat{C}/Gd \ar@<-0.6ex>[d]_{\ep_{d!} F} \ar@<0.6ex>[r]^-{Ab_{Gd}} & (\cat{C}/Gd)_{\ab} \ar@<0.6ex>[l]^-{U_{Gd}} \ar@<-0.6ex>[d]_{\ep_{d \push} \ind{F}} \\
\cat{D}/d \ar@<-0.6ex>[u]_G \ar@<0.6ex>[r]^{Ab_d} & (\cat{D}/d)_{\ab} \ar@<0.6ex>[l]^{U_d} \ar@<-0.6ex>[u]_G
}
\end{equation}
where the diagram of right adjoints commutes, and thus the diagram of left adjoints commutes as well. Take a module $N$ over $d$ and consider:
\begin{align*}
\HA^*(d; N) &= \Ext^*(Ab_{d} d, N) \\
&= \Hy^* \Hom_{\Mod_{d}} (P_{\bu}, N)
\end{align*}
where $P_{\bu} \to Ab_d d$ is a projective resolution. We want to compare this to:
\begin{align}
\HA^*(Gd; G N) &= \Ext^*(Ab_{Gd} Gd, G N) \notag \\
&= \Hy^* \Hom_{\Mod_{Gd}} (Q_{\bu}, G N) \notag \\
&= \Hy^* \Hom_{\Mod_d} (\ep_{d \push} \ind{F} Q_{\bu}, N) \label{ComputHAd}
\end{align}
where $Q_{\bu} \to Ab_{Gd} Gd$ is a projective resolution. Here again, assume the induced left adjoint $\ep_{d \push} \ind{F} \colon (\cat{C}/Gd)_{\ab} \to (\cat{D}/d)_{\ab}$ preserves projectives. Then $\ep_{d \push} \ind{F} Q_{\bu}$ is projective and we have a map:
\begin{align*}
\ep_{d \push} \ind{F} Q_{\bu} \to &\ep_{d \push} \ind{F} Ab_{Gd} Gd \\
= &\ep_{d \push} Ab_{FGd} FGd \\
= &Ab_d (FGd \ral{\ep_d} d) \ral{Ab_d(\ep_d)} Ab_d d.
\end{align*}
It admits a factorization $\ep_{d \push} \ind{F} Q_{\bu} \inj P_{\bu} \ral{\sim} Ab_d d$ and the first map induces:
\[
\Hom_{\Mod_d} (P_{\bu},N) \to \Hom_{\Mod_d} (\ep_{d \push} \ind{F} Q_{\bu},N)
\]
which, upon passing to cohomology, induces a well-defined map. We sum up the argument in the following proposition.

\begin{proposition} 
If the induced left adjoint $\ep_{d \push} \ind{F} \colon (\cat{C}/Gd)_{\ab} \to (\cat{D}/d)_{\ab}$ preserves projectives, then we obtain a comparison map in abelian cohomology:
\begin{equation} \label{ComparMapHAd}
\HA^*(d; N) \to \HA^*(Gd; G N).
\end{equation}
\end{proposition}

Note that \eqref{ComputHAd} exhibits $\HA^*(Gd;GN)$ as the derived functors of $\Hom_{\Mod_d}(-,N) \circ \ep_{d \push} \ind{F}$ applied to $Ab_{Gd} Gd$. Since $\ep_{d \push} \ind{F}$ sends projectives to projectives, we obtain a Grothendieck composite spectral sequence:
\[
E_2^{s,t} = \Ext^s \left( L_t(\ep_{d \push} \ind{F}) (Ab_{Gd} Gd), N \right) \Ra \HA^{s+t}(Gd; GN)
\]
which is first quadrant, cohomologically graded. The comparison map \eqref{ComparMapHAd} is $Ab_d(\ep_d)^*$ followed by an edge morphism:
\begin{align*}
\HA^s (d; N) = \Ext^s (Ab_d d, N) \ral{Ab_d(\ep_d)^*} &\Ext^s (\ep_{d \push} \ind{F} Ab_{Gd} Gd, N) \\
= &E_2^{s,0} \surj E_{\infty}^{s,0} \inj \HA^s (Gd;GN).
\end{align*}
If $\ep_{d \push} \ind{F} \colon (\cat{C}/Gd)_{\ab} \to (\cat{D}/d)_{\ab}$ happens to be exact, then $\ep_{d \push} \ind{F} Q_{\bu}$ is a projective resolution of $\ep_{d \push} \ind{F} Ab_{Gd} Gd$, and we obtain an isomorphism $\Ext^* (\ep_{d \push} \ind{F} Ab_{Gd} Gd, N) \cong \HA^* (Gd;GN)$. In that case, the comparison map \eqref{ComparMapHAd} is simply $Ab_d(\ep_d)^*$, which is not necessarily an isomorphism.

\begin{remark}
Starting with a module $M$ over $Gd$, one might want to compare $\HA^*(Gd;M)$ and $\HA^*(d; \ep_{d \push} \ind{F} M)$. Again, there is no direct comparison. They both map naturally to \linebreak $\HA^*(Gd; G \ep_{d \push} \ind{F} M)$, the former via the unit $M \to G \ep_{d \push} \ind{F} M$ and the latter via the comparison map \eqref{ComparMapHAd}.
\end{remark}

\subsection{The comparison diagram}

Now let us check that the adjunction behaves well at the level of homotopical algebra, when we pass to simplicial objects.

\begin{theorem} \label{ComparDiag}
Let $\cat{C}$ and $\cat{D}$ be quasi-algebraic categories with abelian Beck modules. Let $F \colon \cat{C} \rla \cat{D} \colon G$ be an adjunction that prolongs to a Quillen pair (equivalently, $G$ preserves regular epimorphisms, or $F$ preserves projectives). Then for every object $c$ of $\cat{C}$, the commutative diagram \eqref{DiagAdjHAc} simplicially prolongs to four Quillen pairs:
\[
\xymatrix @R=3pc @C=3pc {
s \cat{C}/c \ar@<-0.6ex>[d]_{F} \ar@<0.6ex>[r]^-{Ab_c} & s (\cat{C}/c)_{\ab} \ar@<0.6ex>[l]^-{U_c} \ar@<-0.6ex>[d]_{\ind{F}} \\
s \cat{D}/Fc \ar@<-0.6ex>[u]_{\eta_c^* G} \ar@<0.6ex>[r]^{Ab_{Fc}} & s (\cat{D}/Fc)_{\ab} \ar@<0.6ex>[l]^{U_{Fc}} \ar@<-0.6ex>[u]_{\eta_c^* G}
}
\]
where moreover, the right Quillen functors $U_c$ and $U_{Fc}$ create fibrations and weak equivalences.

Likewise, for every object $d$ of $\cat{D}$, the commutative diagram \eqref{DiagAdjHAd} simplicially prolongs to four Quillen pairs:
\[
\xymatrix @R=3pc @C=3pc {
s \cat{C}/Gd \ar@<-0.6ex>[d]_{\ep_{d!} F} \ar@<0.6ex>[r]^-{Ab_{Gd}} & s (\cat{C}/Gd)_{\ab} \ar@<0.6ex>[l]^-{U_{Gd}} \ar@<-0.6ex>[d]_{\ep_{d \push} \ind{F}} \\
s \cat{D}/d \ar@<-0.6ex>[u]_G \ar@<0.6ex>[r]^{Ab_d} & s (\cat{D}/d)_{\ab} \ar@<0.6ex>[l]^{U_d} \ar@<-0.6ex>[u]_G
}
\]
where $U_d$ and $U_{Gd}$ create fibrations and weak equivalences.
\end{theorem}

\begin{proof}
The statements about the rows of the diagrams follow from \ref{GoodSetupHQ}. Now we prove the rest.

\emph{Case 1: Ground object $c$ in $\cat{C}$.} The induced right adjoint on slice categories is \linebreak $\eta_c^* G \colon \cat{D}/Fc \to \cat{C}/c$ and it preserves regular epimorphisms. Indeed, $G \colon \cat{D}/Fc \to \cat{C}/GFc$ preserves regular epimorphisms by assumption and \ref{EpiSlice}. The pullback $\eta_c^*$ also preserves regular epimorphisms since $\cat{C}$ is regular and again by \ref{EpiSlice}.

The induced right adjoint on Beck modules $\eta_c^* G \colon (\cat{D}/Fc)_{\ab} \to (\cat{C}/c)_{\ab}$ preserves regular epimorphisms. This follows from the same argument, and the fact that regular epimorphisms in $(-)_{\ab}$ are preserved and reflected by the forgetful functor $U$, by  \ref{PreserveRegEpi} and \ref{ReflectRegEpi}.

\emph{Case 2: Ground object $d$ in $\cat{D}$.} The induced right adjoint on slice categories is just $G \colon \cat{D}/d \to \cat{C}/Gd$, which preserves regular epimorphisms. The induced right adjoint on Beck modules $G \colon (\cat{D}/d)_{\ab} \to (\cat{C}/Gd)_{\ab}$ also preserves regular epimorphisms.
\end{proof}

\subsection{Effect on Quillen (co)homology} \label{sec:AdjHQ}

In this section, we describe the comparison maps induced on Quillen (co)homology. The argument is similar to Section  \ref{sec:AdjHA}, except that we start with the comparison diagrams in \ref{ComparDiag}.

\begin{definition}
For every object $c$ of $\cat{C}$ and module $M$ in $(\cat{C}/c)_{\ab}$, the composite:
\[
\xymatrix{
\cat{C}/c \ar[r]^-{Ab_c} & (\cat{C}/c)_{\ab} \ar[r]^-{\Hom(-,M)} & \Ab^{\opp} \\
}
\]
evaluated at $\id_c$ gives rise to a composite spectral:
\begin{equation} \label{eq:UCSS}
E_2^{s,t} = \Ext^s \left( \HQ_t(c), M \right) \Ra \HQ^{s+t}(c;M) 
\end{equation}
which is first quadrant, cohomologically graded. We call it the \Def{universal coefficient spectral sequence} for Quillen cohomology. It has a left edge morphism:
\[
\HQ^t(c;M) \surj E_{\infty}^{0,t} \inj E_2^{0,t} = \Hom_{\Mod_c} \left( \HQ_t(c), M \right) 
\]
which is the pairing between homology and cohomology, and a bottom edge morphism:
\[
\HA^s(c;M) = E_2^{s,0} \surj E_{\infty}^{s,0} \inj \HQ^s(c;M)
\]
which is the comparison induced by the map $\LL_c \to Ab_c c$ in $s(\cat{C}/c)_{\ab}$.
\end{definition}

\begin{remark}
Since $\cat{C}/c$ is not an abelian category, the spectral sequence \eqref{eq:UCSS} is not a Gro\-then\-dieck spectral sequence in the usual sense \cite{Weibel94}*{Corollary 5.8.4}, but rather an instance of the related hyperhomology spectral sequence \cite{Weibel94}*{Proposition 5.7.6, Corollary 5.7.7}.
\end{remark}

\subsubsection{Ground object in \texorpdfstring{$\cat{C}$}{C}}

\begin{proposition} \label{EffectQuilHomolC}
Assume the setup of \ref{ComparDiag}. Then the comparison diagram induces the following comparison maps.
\begin{enumerate}
\item A natural (up to homotopy) comparison map of cotangent complexes:
\begin{equation} \label{eq:ComparCotCpxC}
\al \colon \ind{F} (\LL_c) \to \LL_{Fc}.
\end{equation}
\item For every degree $n \geq 0$, a natural comparison map in Quillen homology:
\begin{equation} \label{eq:ComparQuilHomolC}
\ind{F} \left( \HQ_n(c) \right) \to \HQ_n(Fc)
\end{equation}
which factors as a composite:
\[
\xymatrix{
\ind{F} \left( \HQ_n(c) \right) \ar[r]^-{h} & L_n (\ind{F} Ab_c)(\id_c) \ar[r]^-{\pi_n (\al)} & \HQ_n(Fc)  
}
\]
where $h$ is a left edge morphism in a composite spectral sequence:
\begin{equation} \label{eq:GrothSS}
E^2_{s,t} = (L_s \ind{F}) \left( \HQ_t(c) \right) \Ra L_{s+t}(\ind{F} Ab_c)(\id_c) = \pi_{s+t} (\ind{F} \LL_c).
\end{equation}
\item If $F$ preserves pullbacks, then the map $h \colon \ind{F} \left( \HQ_n(c) \right) \ral{\cong} \pi_n \ind{F} (\LL_c)$ is an isomorphism, so that the map \eqref{eq:ComparQuilHomolC} can be identified with $\pi_n (\al)$, the effect of \eqref{eq:ComparCotCpxC} on homotopy.
\item If $F$ preserves all weak equivalences, then \eqref{eq:ComparCotCpxC} is a weak equivalence. If moreover $F$ passes to Beck modules, then \eqref{eq:ComparQuilHomolC} is an isomorphism.
\end{enumerate}
\end{proposition}

\begin{proof}
1. Starting with a cofibrant replacement $q_c \colon Qc \ral{\sim} c$ of $\id_c$, we can apply $F$ to obtain $FQc \to Fc$, where the source is still cofibrant (since $F$ is a left Quillen functor) but the map is not a weak equivalence anymore. However, it factors (uniquely and functorially up to homotopy) as $FQc \ral{\psi} QFc \ral{\sim} Fc$ and we obtain the comparison map:
\begin{align*}
\ind{F} (\LL_c) &= \ind{F} Ab_c (Qc \to c) \\
&= Ab_{Fc} F (Qc \to c) \\
&= Ab_{Fc} (FQc \to Fc) \to Ab_{Fc} (QFc \to Fc) = \LL_{Fc}
\end{align*}
which is $Ab_{Fc}(\psi)$.

2. Consider the composite of left adjoints $\cat{C}/c \ral{Ab_c} (\cat{C}/c)_{\ab} \ral{\ind{F}} (\cat{D}/Fc)_{\ab}$ where the categories of Beck modules $(\cat{C}/c)_{\ab}$ and $(\cat{D}/Fc)_{\ab}$ are abelian by assumption, and $\ind{F}$ is additive, by \ref{Additive}. Both $\cat{C}/c$ and $(\cat{C}/c)_{\ab}$ have enough projectives, since they are quasi-algebraic. Since $Ab_c$ prolongs to a left Quillen functor, we obtain a first quadrant, homologically graded composite spectral sequence:
\[
E^2_{s,t} = (L_s \ind{F}) \left( L_t Ab_c \right)(E \to c) \Ra L_{s+t}(\ind{F} Ab_c)(E \to c)
\]
for any object $E \to c$ of $\cat{C}/c$. Applying the spectral sequence to $\id_c$ yields the $E^2$ term in \eqref{eq:GrothSS}. The left edge morphism is:
\[
\ind{F} \HQ_t(c) = E^2_{0,t} \surj E^{\infty}_{0,t} \inj L_t (\ind{F} Ab_c)(\id_c).
\]
This edge morphism can also be described as the homology comparison map \cite{Barr06}*{Theorems 2.2 and 2.6} for the right exact functor $\ind{F}$, applied to the chain complex $\LL_c$ (using implicitly the Dold-Kan correspondence):
\[
\ind{F} \left( \HQ_n(c) \right) = \ind{F} H_n(\LL_c) \to H_n \ind{F} (\LL_c).
\]

3. If $F$ preserves pullbacks, then $F$ passes to Beck modules and moreover, the induced left adjoint $\ind{F} = F \colon (\cat{C}/c)_{\ab} \to (\cat{D}/Fc)_{\ab}$ preserves finite limits, hence is left exact (and thus exact). In that case, the homology comparison $h$ is an isomorphism.

4. If $F$ preserves all weak equivalences, then the map $\psi \colon FQc \ral{\sim} QFc$ is a weak equivalence (between cofibrant objects). Since $Ab_{Fc}$ is a left Quillen functor, the map $\al = Ab_{Fc}(\psi)$ is also a weak equivalence.

If moreover $F$ passes to Beck modules, then the induced left adjoint $\ind{F} = F$ also preserves all weak equivalences, and in particular is exact, so that the homology comparison $h$ is an isomorphism. Using \hyperref[PassesToAb]{\ref*{InducedAdjAb} (\ref*{PassesToAb})}, if $w$ is a weak equivalence in $s(\cat{C}/c)_{\ab}$, then $U_{Fc} F(w) = F U_{c}(w)$ is a weak equivalence in $s\cat{D}/Fc$. Since $U_{Fc}$ reflects weak equivalences (by \ref{ComparDiag}), $F(w)$ is a weak equivalence in $s(\cat{D}/Fc)_{\ab}$. 
\end{proof}


\begin{remark}
Consider the other factorization $\ind{F} Ab_c = Ab_{Fc} F$ of left adjoints $\cat{C}/c \ral{F} \linebreak \cat{D}/Fc \ral{Ab_{Fc}} (\cat{D}/Fc)_{\ab}$. Assume that $\cat{D}$ is an $\N$-sorted finitary variety such that the underlying $\N$-graded set has a natural structure of graded group, i.e., there is a faithful functor $\cat{D} \to \Gp^{\N}$ lifting the forgetful functor $U \colon \cat{D} \to \Set^{\N}$. Then there is a generalized Grothendieck spectral sequence \cite{Blanc92}*{Theorem 4.4} abutting to the left derived functors $L_*(Ab_{Fc} F)$ which involves homotopy operations in $s(\cat{D}/Fc)$. The map $\pi_n (\al) \colon L_n (Ab_{Fc} F)(\id_c) \to \HQ_n(Fc)$ from \ref{EffectQuilHomolC} is a bottom edge morphism in that spectral sequence.
\end{remark}

\begin{proposition} \label{EffectHQc}
Let $N$ be a module over $Fc$.
\begin{enumerate}
 \item The comparison diagram \ref{ComparDiag} induces for every degree $n \geq 0$ a natural comparison map:
\begin{equation} \label{eq:ComparHQc}
\al^* \colon \HQ^n(Fc; N) \to \HQ^n(c; \eta_c^* GN).
\end{equation}
\item If the comparison of cotangent complexes \eqref{eq:ComparCotCpxC} is a weak equivalence, then \eqref{eq:ComparHQc} is an isomorphism. This holds in particular when $F$ preserves all weak equivalences.
\item The maps $\pi_n(\al) \colon \pi_n (\ind{F} \LL_c) \to \HQ_n(Fc)$ induce a map of spectral sequences:
\[
\xymatrix{
E_2^{s,t} = \Ext^s \left( \HQ_t(Fc), N \right) \ar[d]_{\pi_t(\al)^*} \ar@{=>}[r] & \HQ^{s+t}(Fc;N) \ar[d]^{\al^*} \\
E_2^{s,t} = \Ext^s \left( \pi_t (\ind{F} \LL_c), N \right) \ar@{=>}[r] & \HQ^{s+t}(c;\eta_c^* GN) \\
}
\]
where the top row is the universal coefficient spectral sequence.  
\end{enumerate}
\end{proposition}

\begin{proof}
1. Apply the functor $\Hom_{\Mod_{Fc}}(-,N)$ to the comparison map \eqref{eq:ComparCotCpxC}:
\[
\Hom_{\Mod_{Fc}}(\LL_{Fc},N) \to \Hom_{\Mod_{Fc}}(\ind{F}(\LL_c),N) \cong \Hom_{\Mod_c}(\LL_c,\eta_c^* G N)
\]
and upon passing to cohomology, we obtain the map \eqref{eq:ComparHQc}.

2. Since $\ind{F}(\LL_c)$ and $\LL_{Fc}$ are cofibrant, a weak equivalence \eqref{eq:ComparCotCpxC} between them will induce a weak equivalence upon applying $\Hom(-,N)$, by \cite{Weibel94}*{Corollary 5.7.7}.

3. This follows from the naturality of the hyper-derived functor spectral sequence for $\Hom(-,N)$, applied to the map of chain complexes $\al \colon \ind{F} \LL_c \to \LL_{Fc}$.
\end{proof}

\subsubsection{Ground object in \texorpdfstring{$\cat{D}$}{D}}

A similar reasoning yields the following propositions.

\begin{proposition} \label{EffectQuilHomolD}
Assume the setup of \ref{ComparDiag}. Then the comparison diagram induces the following comparison maps.
\begin{enumerate}
\item A natural (up to homotopy) comparison map of cotangent complexes:
\begin{equation} \label{eq:ComparCotCpxD}
\al \colon \ep_{d \push} \ind{F} (\LL_{Gd}) \to \LL_d.
\end{equation}
\item For every degree $n \geq 0$, a natural comparison map in Quillen homology:
\begin{equation} \label{ComparQuilHomolD}
\ep_{d \push} \ind{F} \left( \HQ_n(Gd) \right) \to \HQ_n(d)
\end{equation}
which factors as a composite:
\[
\xymatrix{
\ep_{d \push} \ind{F} \left( \HQ_n(Gd) \right) \ar[r]^-{h} & L_n (\ep_{d \push} \ind{F} Ab_{Gd})(\id_{Gd}) \ar[r]^-{\pi_n (\al)} & \HQ_n(d) \\
}
\]
where $h$ is a left edge morphism in a composite spectral sequence:
\[
E^2_{s,t} = \left( L_s (\ep_{d \push} \ind{F}) \right) \left( \HQ_t(Gd) \right) \Ra L_{s+t}(\ep_{d \push} \ind{F} Ab_{Gd})(\id_{Gd}) = \pi_{s+t} (\ep_{d \push} \ind{F} \LL_{Gd}).
\]
\item If $F$ preserves pullbacks and $\ep_{d \push}$ is exact, then the map $h \colon \ep_{d \push} \ind{F} \left( \HQ_n(Gd) \right) \ral{\cong} \pi_n \ep_{d \push} \ind{F} (\LL_{Gd})$ is an isomorphism, so that the map \eqref{ComparQuilHomolD} can be identified with $\pi_n (\al)$, the effect of \eqref{eq:ComparCotCpxD} on homotopy.
\item If $F$ preserves all weak equivalences and $\ep_d$ is an isomorphism, then \eqref{eq:ComparCotCpxD} is a weak equivalence. If moreover $F$ passes to Beck modules, then \eqref{ComparQuilHomolD} is an isomorphism.
\end{enumerate}
\end{proposition}

\begin{proposition} \label{EffectHQd}
Let $N$ be a module over $d$.
\begin{enumerate}
 \item The comparison diagram \ref{ComparDiag} induces for every degree $n \geq 0$ a natural comparison map:
\begin{equation} \label{ComparHQd}
\al^* \colon \HQ^*(d; N) \to \HQ^*(Gd; GN).
\end{equation}
 \item If the comparison of cotangent complexes \eqref{eq:ComparCotCpxD} is a weak equivalence, then the map  \eqref{ComparHQd} is an isomorphism.
 \item The maps $\pi_n(\al) \colon \pi_n (\ep_{d \push} \ind{F} \LL_{Gd}) \to \HQ_n(d)$ induce a map of spectral sequences:
\[
\xymatrix{
E_2^{s,t} = \Ext^s \left( \HQ_t(d), N \right) \ar[d]_{\pi_t(\al)^*} \ar@{=>}[r] & \HQ^{s+t}(d;N) \ar[d]^{\al^*} \\
E_2^{s,t} = \Ext^s \left( \pi_t (\ep_{d \push} \ind{F} \LL_{Gd}), N \right) \ar@{=>}[r] & \HQ^{s+t}(Gd;GN) \\
}
\]
where the top row is the universal coefficient spectral sequence.  
\end{enumerate}
\end{proposition}

\section{Examples} \label{sec:Examples}

In this section, we study three examples. The first serves as a warm-up. The second relates Andr\'e-Quillen cohomology to Hochschild cohomology (Proposition \ref{HHHQ}). The third shows how Quillen cohomology of a $\Pi$-algebra with coefficients in a truncated module can be computed within the world of truncated $\Pi$-algebras (Theorem \ref{HQtrunc2}), which have a much simpler structure than (non-truncated) $\Pi$-algebras.

\subsection{Abelian groups}

Consider the functor $Ab \colon \Gp \to \Ab$ that kills commutators, i.e., $Ab(G) = G/[G,G]$, which is left adjoint to the inclusion functor $\io \colon \Ab \to \Gp$. In the notation above, the functor $Ab$ is $Ab_{\{\ast\}} \colon \Gp / \{\ast\} \to (\Gp / \{\ast\})_{\ab} \cong \Ab$, the abelianization functor over the trivial group $\{\ast\}$.

Although $Ab$ does not preserve kernel pairs in general, it does pass to Beck modules. Recall that for a (left) $G$-module $M$, the semidirect product $G \lt M$ is the group with underlying set $G \x M$ and multiplication $(g,m)(g',m') = (gg', m + gm')$.

\begin{proposition} \label{Commutativization}
$Ab \colon \Gp \to \Ab$ passes to Beck modules, on which it induces the coinvariants functor $(-)_G \colon \Mod_G \to \Ab$.
\end{proposition}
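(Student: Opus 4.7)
The plan is to compute both sides explicitly via the semidirect product description of Beck modules. Recall the standard equivalences $Ab(\Gp/G) \simeq \Mod_G$ given by $M \mapsto (G \lt M \to G)$, and $Ab(\Ab/A) \simeq \Ab$ given by $N \mapsto (A \op N \to A)$, the latter being independent of $A$. I would begin by identifying the induced right adjoint coming from $\io \colon \Ab \to \Gp$: pulling $G^{ab} \op N \to G^{ab}$ back along $\eta_G \colon G \to G^{ab}$ and computing the group structure on $G \x_{G^{ab}} (G^{ab} \op N)$ shows it is $G \lt N$ with the trivial $G$-action. Consequently, if $Com$ passes to Beck modules at all, its induced left adjoint must be the standard left adjoint to trivial action, namely coinvariants $(-)_G$.

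The heart of the argument is the computation $Com(G \lt M) \cong G^{ab} \op M_G$. In $G \lt M$ the basic commutators evaluate to
\[
[(g,0),(g',0)] = ([g,g'],\, 0), \qquad [(g,0),(1,m)] = (1,\, g m - m),
\]
so $[G \lt M,\, G \lt M]$ contains $[G,G] \x IM$, where $IM = \lan g m - m \ran$ is the augmentation submodule. Conversely, the quotient $(G \lt M)/([G,G] \x IM)$ has underlying set $G^{ab} \x M_G$ with multiplication
\[
(\ol g, \ol m)(\ol g', \ol m') = (\ol g \ol g',\, \ol m + \ol{g m'}) = (\ol g \ol g',\, \ol m + \ol m'),
\]
which is already abelian; hence $[G \lt M,\, G \lt M]$ equals $[G,G] \x IM$ on the nose, giving $Com(G \lt M) = G^{ab} \op M_G$ together with its canonical projection to $G^{ab} = Com(G)$.

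To confirm that $Com$ really does pass to Beck modules (rather than merely sending them to abelian group objects after some further massage), I would check that it preserves the kernel pair of the split epi $G \lt M \to G$. The same commutator computation applied to the $G$-module $M \op M$ gives
\[
Com(G \lt (M \op M)) = G^{ab} \op (M \op M)_G = G^{ab} \op M_G \op M_G,
\]
which agrees with the pullback $(G^{ab} \op M_G) \x_{G^{ab}} (G^{ab} \op M_G)$ computed in $\Ab/G^{ab}$; the addition, zero, and negation maps of $M$ then transport under $Com$ to those of $M_G$, as required. The main obstacle is precisely the identification of the commutator subgroup in paragraph two; once this is in hand, functoriality in $M$ and the match with the right adjoint from paragraph one are formal verifications.
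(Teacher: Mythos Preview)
Your proposal is correct and follows essentially the same approach as the paper: compute $Com(G \lt M)$ by identifying the commutator subgroup of $G \lt M$, then verify that $Com$ preserves the relevant fibre product $G \lt (M \op M)$. The paper writes out the full commutator $[(g_1,m_1),(g_2,m_2)]$ and reads off the kernel, whereas you use the two special commutators $[(g,0),(g',0)]$ and $[(g,0),(1,m)]$ and then check that the quotient by $[G,G] \times IM$ is already abelian; this is a slightly cleaner variant of the same computation. Your opening paragraph, identifying $\eta_G^* \io$ as the trivial-action functor and thereby predicting that the induced left adjoint must be coinvariants, is not part of the paper's proof proper but appears just afterward as a remark; it is a nice sanity check but not logically needed once you have the explicit computation.
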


\begin{proof}
Let us first compute $Ab(G \lt M)$. Commutators in $G \lt M$ are given by
\[
\left[ (g_1,m_1),(g_2,m_2) \right] = \left( [g_1,g_2], m_1 - g_1 g_2 g_1^{-1} m_1 + g_1 m_2 - g_1 g_2 g_1^{-1} g_2^{-1} m_2 \right).
\]
Applying $Ab$ to the split extension $G \lt M \to G$ yields a split extension $Ab(G \lt M) \to Ab(G)$ in $\Ab$ whose kernel is $M$ modulo the subgroup
\begin{align*}
& \lan m_1 - g_1 g_2 g_1^{-1} m_1 + g_1 m_2 - g_1 g_2 g_1^{-1} g_2^{-1} m_2 \mid g_i \in G, m_i \in M \ran \\
&= \lan m - g m \mid g \in G, m \in M \ran.
\end{align*}
In other words, we have $Ab(G \lt M) \cong Ab(G) \op M_G$, where $M_G$ is the abelian group of coinvariants of $M$.

Moreover, $Ab \colon \Gp \to \Ab$ preserves the pullback that defines the multiplication structure map:
\[
Ab \left( (G \lt M) \x_G (G \lt M) \right) = Ab(G \lt M) \x_{Ab(G)} Ab(G \lt M).
\]
In $\Gp$ as well as in $\Ab$, we think of the module as the kernel of the split extension, and in this case, a $G$-module $M$ is sent to the abelian group $M_G$.
\end{proof}

Let us describe the effect of the adjunction $Ab \colon \Gp \rla \Ab \colon \io$ on Quillen homology. Note that the right adjoint $\io$ preserves regular epimorphisms, which are just surjections. Hence, the prolonged adjunctions are Quillen pairs.

Note also that the unit of the adjunction is $\eta_G \colon G \surj G/[G,G]$ and the counit is the identity. We work with a ground object $G$ in $\Gp$, since we get nothing new from a ground object in $\Ab$. The comparison diagram \eqref{DiagAdjHAc} becomes:
\begin{equation} \label{DiagHQgroups}
\xymatrix @R=3pc @C=3pc {
s\Gp/G \ar@<-0.6ex>[d]_{Ab} \ar@<0.6ex>[r]^-{Ab_G} & s \Mod_G \ar@<0.6ex>[l]^-{G \lt -} \ar@<-0.6ex>[d]_{(-)_G} \\
s\Ab/Ab(G) \ar@<-0.6ex>[u]_{\eta_G^* \io} \ar@<0.6ex>[r]^-{\Src} & s\Ab \ar@<0.6ex>[l]^-{Ab(G) \op -} \ar@<-0.6ex>[u]_{\Triv}
} \end{equation}
and by \ref{ComparDiag}, it prolongs to four Quillen pairs. Here $\Src$ is the ``source'' functor, which is the abelianization over any abelian group, and $\Triv$ is the functor assigning to an abelian group the trivial $G$-action. Indeed, the right adjoint on Beck modules is $\eta_G^* \io$. Given a Beck module $Ab(G) \op A$, view it as a split extension of groups, which means $A$ has a trivial $Ab(G)$ action, and then pull the action back along $\eta_G \colon G \to G/[G,G]$, which endows $A$ with the trivial $G$-action.

\begin{remark}
In \ref{Commutativization}, we checked explicitly that $Ab$ induces the functor $(-)_G$ on Beck modules. We could also look at the induced right adjoint $\eta_G^* \io = \Triv$ and use its left adjoint to complete diagram \eqref{DiagHQgroups}. The left adjoint of $\Triv = \ep^*$ is indeed $(-)_G = \ep_{\push} = \Z \ot_{\Z G} (-)$, where $\ep \colon \Z G \to \Z$ is the augmentation.
\end{remark}

We now formulate the result about Quillen homology.

\begin{proposition} \label{EffectHQgroups}
Let $C_{\bu} \to G$ be a cofibrant replacement of $G$ in groups and let $\LL_G$ denote the cotangent complex of $G$. Then the following holds:
\[
\pi_* \left( C_{\bu} / [C_{\bu},C_{\bu}] \right) = \pi_* \left( (\LL_G)_G \right).
\]
\end{proposition}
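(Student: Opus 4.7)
The plan is to read off the equality directly from the commutative comparison diagram (\ref{DiagHQgroups}). By theorem \ref{ComparDiag} applied to the adjunction $Com \dashv \io$, that diagram is a commutative square of Quillen pairs, so in particular the square of left adjoints commutes up to canonical natural isomorphism: for every object $X \to G$ in $s\Gp/G$ one has
\[
(Ab_G(X \to G))_G \;\cong\; Src\bigl(Com(X) \to Com(G)\bigr).
\]
Here $(-)_G$ is the left adjoint induced on Beck modules by $Com$ (proposition \ref{Commutativization}), and $Src$ is the ``source'' functor, which serves as abelianization in the ambient abelian category and thus simply forgets the augmentation to $Com(G)$.

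I would apply this natural isomorphism to the given cofibrant replacement $C_\bu \to G$. The left-hand side becomes $(\LL_G)_G$ by the very definition $\LL_G = Ab_G(C_\bu \to G)$. The right-hand side equals $Src\bigl(Com(C_\bu) \to Com(G)\bigr) = Com(C_\bu) = C_\bu/[C_\bu, C_\bu]$. Thus the two simplicial abelian groups are canonically isomorphic, and taking $\pi_*$ of both sides gives the claimed equality.

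The one thing to check along the way is the commutativity of the left adjoint square, which I would obtain formally from the fact that the corresponding square of right adjoints commutes strictly: both $U_G \circ Triv$ and $\eta_G^* \io \circ (Com(G) \op -)$ send an abelian group $A$ to the trivial semidirect product $G \lt A$, as is already noted in the discussion following diagram (\ref{DiagHQgroups}). Strictly commuting right adjoints yield left adjoints commuting up to canonical isomorphism, which is all that is required. There is no genuine obstacle; the proposition is essentially a formal unpacking of the comparison diagram in this example, once one observes that abelianization in an abelian category coincides with the source functor.
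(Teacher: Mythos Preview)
Your proof is correct and follows essentially the same approach as the paper: both arguments read off the equality from the commutativity of the left-adjoint square in diagram (\ref{DiagHQgroups}), applied to the cofibrant replacement $C_\bu \to G$, identifying one composite as $(\LL_G)_G$ and the other as $C_\bu/[C_\bu,C_\bu]$, then passing to $\pi_*$. Your additional remark verifying that the right adjoints commute strictly is a nice clarification, but the core argument is the same.
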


\begin{proof}
Starting from a cofibrant replacement of $G$ in $\Gp$ (or equivalently, of $\id_G$ in $\Gp/G$) in the upper left corner of \eqref{DiagHQgroups}, going down then right yields
\begin{align*}
\Src \circ Ab(C_{\bu} \to G) &= \Src \left( Ab(C_{\bu}) \to Ab(G) \right) \\
&= Ab(C_{\bu}) = C_{\bu} / [C_{\bu},C_{\bu}]
\end{align*}
whereas going right then down yields $\left( Ab_G (C_{\bu} \to G) \right)_G = (\LL_G)_G$. Taking $\pi_*$ gives a well-defined equality, since the simplicial $G$-module $\LL_G$ is defined up to homotopy.
\end{proof}

In fact, one can compute both sides explicitly and check that they coincide. For groups, abelianization is $Ab_G G = I_G = \ker(\Z G \to \Z)$ and the cotangent complex is discrete, meaning $\LL_G \to I_G$ is a cofibrant replacement, in particular a flat resolution. Taking coinvariants results in the derived functors thereof, namely, group homology:
\[
\pi_* \left( (\LL_G)_G \right) = L_*(-)_G (I_G) = \Hy_*(G; I_G).
\]
Using the short exact sequence $0 \to I_G \to \Z G \to \Z \to 0$ of $G$-modules, the connecting morphism $\Hy_{i+1}(G; \Z) \to \Hy_i(G; I_G)$ is an isomorphism for all $i \geq 0$, from which we conclude $\pi_i \left( (\LL_G)_G \right) = \Hy_{i+1}(G;\Z)$ for all $i \geq 0$. On the other hand, \cite{Goerss07}*{Example 4.26} uses a different argument to show $\pi_i \left( C_{\bu} / [C_{\bu},C_{\bu}] \right) = \Hy_{i+1}(G;\Z)$ for all $i \geq 0$. Proposition \ref{EffectHQgroups} is consistent with these computations.

\subsection{Commutative algebras}

Let $R$ be a fixed commutative ring; denote by $\Alg_R$ the category of associative $R$-algebras and by $\Com_R$ the category of commutative $R$-algebras. (All our rings and algebras are assumed associative and unital.) Consider the functor $Com \colon \Alg_R \to \Com_R$ which kills the 2-sided ideal generated by commutators, that is, $Com(A) = A / [A,A]$. It is left adjoint to the inclusion functor $\io \colon \Com_R \to \Alg_R$, which preserves regular epimorphisms (i.e., surjections).

Recall that Beck modules over an associative $R$-algebra $A$ are $A$-bimodules over $R$, meaning that scalars in $R$ act the same way on the left and the right; we denote this category by $\Bimod{A}_R$. Beck modules over a commutative $R$-algebra $A$ are $A$-modules in the usual sense, which we denote $\Modd{A}$.

\begin{proposition} \label{InducedCom}
\begin{enumerate}
\item The functor $Com \colon \Alg_R \to \Com_R$ passes to Beck modules.
\item It induces the ``central quotient'' functor $CQ \colon \Bimod{A}_R \to \Modd{Com(A)}$ which coequalizes the two actions.
\end{enumerate}
\end{proposition}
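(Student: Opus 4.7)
The plan is to mirror Proposition \ref{Commutativization}: take a generic Beck module $A \lt M \surj A$ in $\Alg_R$ (a square-zero extension with multiplication $(a,m)(a',m') = (aa', am' + ma')$), apply $Com$ explicitly, and identify the result as a split square-zero extension in $\Com_R$.

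The first step is to compute commutators in $A \lt M$. Since $M \cdot M = 0$, one finds
\[
[(a_1,m_1),(a_2,m_2)] = \bigl([a_1,a_2],\ a_1 m_2 + m_1 a_2 - a_2 m_1 - m_2 a_1\bigr),
\]
so $A \lt M$ is commutative iff $A$ is commutative and the left and right $A$-actions on $M$ coincide. By the universal property of $Com$, this means $Com(A \lt M)$ is the smallest quotient of $A \lt M$ enforcing both conditions, namely $Com(A) \lt CQ(M)$, where $CQ(M) := M/N$ with $N$ the sub-bimodule of $M$ generated by $\{am - ma \mid a \in A,\ m \in M\}$. The $A$-bimodule structure on $M$ descends to $CQ(M)$ with left and right actions that agree and factor through $Com(A)$, so $CQ(M)$ is a genuine $Com(A)$-module.

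The next step is to verify that $Com$ preserves the pullback $(A \lt M) \x_A (A \lt M) = A \lt (M \x M)$ encoding the Beck abelian group structure. Applying the identification above to $M \x M$ yields $Com(A \lt (M \x M)) = Com(A) \lt CQ(M \x M)$, and since $CQ$ manifestly preserves finite products (the defining relations are coordinatewise), this equals $(Com(A) \lt CQ(M)) \x_{Com(A)} (Com(A) \lt CQ(M))$. Together with the analogous (easier) verification on the unit and inverse structure maps, this shows that $Com$ preserves kernel pairs of split epis, so Proposition \ref{AdjBeckMod} applies and $Com$ passes to Beck modules; moreover the same computation identifies the induced functor on Beck modules as the central quotient $CQ$.

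The main obstacle I expect is the first step: pinning down precisely the universal quotient that makes left and right actions agree. The relations $am = ma$ must be interpreted as generating a \emph{sub-bimodule} of $M$, not merely a subgroup, to reflect that we are quotienting by a 2-sided ideal of $A \lt M$ which interacts with the bimodule action on both sides; keeping track of this closure condition is the one non-formal point, and it is handled by defining $N$ as a sub-bimodule from the outset and then invoking the universal property rather than chasing generators of the ideal by hand.
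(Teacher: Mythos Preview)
Your proposal is correct and follows essentially the same approach as the paper: both compute the commutators in $A \ltimes M$, identify the resulting quotient of $M$ as the central quotient $M/\langle am - ma\rangle$ (with $\langle \cdot \rangle$ meaning sub-bimodule, as you correctly flag), and conclude that $Com$ carries the Beck module structure to a square-zero extension over $Com(A)$. The only cosmetic difference is that the paper verifies $K^2 = 0$ directly by choosing representatives, whereas you get it for free by identifying $Com(A \ltimes M) \cong Com(A) \ltimes CQ(M)$ as a semidirect product and then checking the relevant pullback; both routes are equally valid.
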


\begin{proof}
Start with a Beck module over $A$ in $\Alg_R$, i.e., a split extension $p \colon A \op M \to A$ satisfying $M^2 = 0$. Applying $Com$ to it yields a split extension 
\[
\xymatrix{
0 \ar[r] & K \ar[r] & Com(A \op M) \ar@<0.6ex>[r]^-{Com(p)} & Com(A) \ar@<0.6ex>[l]^-{Com(s)} \ar[r] & 0 \\
}
\]
in $\Com_R$. It remains to show that its kernel has square zero.

\textbf{Commutators in $A \op M$.} Using the decomposition $(a,m) = (a,0)+(0,m)$, commutators will be generated by those of the forms $[(a,0),(a',0)] = ([a,a'],0)$ and $[(a,0),(0,m)] = (0, a \cdot m - m \cdot a)$. Thus the kernel is
\begin{equation} \label{CentralQuotient}
K \simeq M / \lan a \cdot m - m \cdot a \ran
\end{equation}
where we kill the $A$-subbimodule generated by all elements of that form.

\textbf{$K$ has square zero.} Take two elements $x,x' \in K = \ker Com(p) \subset Com(A \op M)$ and choose representatives $(c,m)$ and $(c',m')$ in $A \op M$, for $c,c' \in [A,A]$. Then $xx'$ is represented by $(c,m)(c',m') = (cc', c \cdot m' + m \cdot c')$. One readily checks that elements of the form $c \cdot m$ and $m \cdot c$ are zero in $Com(A \op M)$, for any $m \in M$ and $c \in [A,A]$. This proves the first assertion, and \eqref{CentralQuotient} proves the second.
\end{proof}

The adjunction $Com \colon \Alg_R \rla \Com_R \colon \io$ allows us to compare the two categories. According to \ref{InducedCom}, the comparison diagram \eqref{DiagAdjHAc} becomes
\begin{equation} \label{DiagHAalg}
\xymatrix @R=3pc @C=3pc {
\Alg_R/A \ar@<-0.6ex>[d]_{Com} \ar@<0.6ex>[r]^-{A \ot I_{(-)} \ot A} & \Bimod{A}_R \ar@<0.6ex>[l]^-{A \op -} \ar@<-0.6ex>[d]_{CQ} \\
\Com_R/Com(A) \ar@<-0.6ex>[u]_{\eta_A^* \io} \ar@<0.6ex>[r]^-{Com(A) \ot \Om_{(-)/R}} & \Modd{Com(A)} \ar@<0.6ex>[l]^-{Com(A) \op -} \ar@<-0.6ex>[u]_{\text{same action}} \\
}
\end{equation}
where ``same action'', the right adjoint on the right, means that we view a $Com(A)$-module as an $A$-bimodule by acting via the unit $A \to Com(A) = A / [A,A]$ both on the left and the right. Abelianization in associative algebras is $Ab_A(B \to A) = A \ot_B I_B \ot_B A$ where $I_B$ denotes the kernel of the multiplication map $m \colon B \ot_R B \to B$. Abelianization in commutative algebras is $Ab_S(T \to S) = S \ot_T \Om_{T/R}$ where $\Om_{T/R}$ denotes the module of K\"ahler differentials $I_T / I_T^2$. By \ref{ComparDiag}, diagram \eqref{DiagHAalg} prolongs to four Quillen pairs.

\begin{remark}
One can view $\Bimod{A}_R$ as the category of left $A \ot_R A^{\opp}$ modules, and the ``same action'' functor $\Modd{Com(A)} \to \Bimod{A}_R$ as the restriction $(\eta_A m)^*$ along $A \ot_R A^{\opp} \ral{m} A \ral{\eta_A} Com(A)$. Its left adjoint is the pushforward $(\eta_A m)_{\push} = Com(A) \ot_{A \ot_R A^{\opp}} -$ which is indeed the functor coequalizing the two actions.
\end{remark}

Some special cases are of particular interest. When the $R$-algebra $A$ is just $R$ itself -- and is in particular commutative -- the comparison diagram \eqref{DiagHAalg} becomes:
\[
\xymatrix @R=3pc @C=3pc {
\Alg_R/R \ar@<-0.6ex>[d]_{Com} \ar@<0.6ex>[r]^-{R \ot I_{(-)} \ot R} & \Bimod{R}_R \ar@<0.6ex>[l]^-{R \op -} \ar@<-0.6ex>[d]_{\id} \\
\Com_R/R \ar@<-0.6ex>[u]_{\io} \ar@<0.6ex>[r]^{R \ot \Om_{(-)/R}} & \Modd{R}. \ar@<0.6ex>[l]^-{R \op -} \ar@<-0.6ex>[u]_{\id}
}
\]
The diagram says that killing all products can be done in two steps, by killing all commutators first. One could try to use the Grothendieck composite spectral sequence for the non-abelian setting \cite{Blanc92}*{Theorem 4.4} to relate Quillen homology in $\Alg_R$ to Quillen homology in $\Com_R$, i.e., Andr\'e-Quillen homology. This approach requires the knowledge of homotopy operations in $s\Com_R$, which are known notably for $R = \F_2$ \cite{Dwyer80} \cite{Goerss90}*{Chapter II} \cite{Goerss95}.

More generally, another interesting case is when the cotangent complex in associative algebras is discrete, i.e., $\LL_A \to Ab_A A$ is a weak equivalence. Quillen \cite{Quillen70}*{Proposition 3.6} shows that this happens under the condition $\Tor_i^R(A,A) = 0$ for all $i \geq 1$ (for example if $R$ is a field), in which case $\HA^*(A;M) \cong \HQ^*(A;M)$ is essentially the same as the usual Hochschild cohomology, and likewise for homology.

\begin{proposition} \label{HHHQ}
Let $A$ be a commutative $R$-algebra satisfying $\Tor_i^R(A,A) = 0$ for all $i \geq 1$. Then for every $j \geq 1$, the Hochschild homology of $A$ can be written as
\[
\HH_{j+1}(A) = \pi_j \left( A \ot_{Com(C_{\bu})} \Om_{Com(C_{\bu})/R} \right)
\]
where $C_{\bu} \to A$ is a cofibrant replacement of $A$ in $\Alg_R$. In particular, there is a comparison map $\HH_{j+1}(A) \to \HQ_j(A)$ for $j \geq 1$.
\end{proposition}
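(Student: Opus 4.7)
The plan is to trace the comparison diagram (\ref{DiagHAalg}) on a cofibrant replacement, invoke Quillen's discreteness result for the associative cotangent complex, and finish with a short exact sequence of bimodules. Since $A$ is already commutative, the unit $\eta_A \colon A \to Com(A)$ is the identity, so all instances of $Com(A)$ below simplify to $A$. By Theorem \ref{ComparDiag}, the commutative diagram (\ref{DiagHAalg}) simplicially prolongs to a square of left Quillen functors.

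Starting from a cofibrant replacement $C_\bu \ral{\sim} A$ in $s\Alg_R / A$ (chosen degreewise free), chasing both paths from the top-left to the bottom-right through the left adjoints yields the identification
\[
A \ot_{Com(C_\bu)} \Om_{Com(C_\bu)/R} \;=\; CQ \bigl( A \ot_{C_\bu} I_{C_\bu} \ot_{C_\bu} A \bigr) \;=\; A \ot_{A^e} \LL_A^{\Alg},
\]
where $\LL_A^{\Alg}$ is the cotangent complex in associative algebras, $A^e \dfn A \ot_R A$, and $CQ(N) = A \ot_{A^e} N$ is the central-quotient functor of Proposition \ref{InducedCom}.

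The hypothesis $\Tor_i^R(A,A) = 0$ for $i \geq 1$ is precisely what \cite{Quillen70}*{Prop 3.6} requires to make $\LL_A^{\Alg}$ discrete, with $\pi_0 \LL_A^{\Alg} = I_A \dfn \ker(A \ot_R A \to A)$. When $C_\bu$ is degreewise free, each $I_{C_n}$ is free as $C_n^e$-bimodule, and base change gives that $A \ot_{C_n} I_{C_n} \ot_{C_n} A$ is free as $A^e$-module. Combined with discreteness, $\LL_A^{\Alg}$ is a free $A^e$-resolution of $I_A$, so
\[
\pi_j \bigl( A \ot_{A^e} \LL_A^{\Alg} \bigr) \;=\; \Tor_j^{A^e}(A, I_A).
\]
Applying $\Tor^{A^e}(A, -)$ to the short exact sequence $0 \to I_A \to A^e \to A \to 0$ and using that $A^e$ is projective over itself gives the dimension shift $\Tor_j^{A^e}(A, I_A) \cong \Tor_{j+1}^{A^e}(A, A) = \HH_{j+1}(A)$ for $j \geq 1$, completing the first assertion.

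For the comparison map to $\HQ_j(A)$, the simplicial commutative algebra $Com(C_\bu)$ is cofibrant in $s\Com_R / A$ because $Com$ is a left Quillen functor between slice categories, but its augmentation to $A$ need not be a weak equivalence. Factoring it as a cofibration $Com(C_\bu) \inj D_\bu$ followed by an acyclic fibration $D_\bu \ral{\sim} A$ produces a cofibrant replacement $D_\bu \ral{\sim} A$ in $s\Com_R / A$ and an induced map $A \ot_{Com(C_\bu)} \Om_{Com(C_\bu)/R} \to A \ot_{D_\bu} \Om_{D_\bu/R} = \LL_A^{\Com}$, which is precisely the cotangent-complex comparison map (\ref{ComparCotCpxC}) of Proposition \ref{EffectQuilHomolC}. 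Taking $\pi_j$ produces the asserted map $\HH_{j+1}(A) \to \HQ_j(A)$. The main obstacle is justifying that $\LL_A^{\Alg}$ is a bona fide free $A^e$-resolution of $I_A$ rather than merely something weakly equivalent to one, which rests on the degreewise freeness of $C_\bu$ together with Quillen's discreteness result.
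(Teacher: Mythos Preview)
Your proposal is correct and follows essentially the same route as the paper: trace the comparison square on a cofibrant replacement, use Quillen's discreteness result to identify $\LL_A^{\Alg}$ as a projective $A^e$-resolution of $I_A$, apply the short exact sequence $0 \to I_A \to A^e \to A \to 0$ for the dimension shift, and invoke the cotangent-complex comparison map for the final assertion. The only cosmetic difference is that you cite (\ref{ComparCotCpxC}) where the paper cites (\ref{ComparCotCpxD}); since $A$ is already commutative the unit and counit are both identities, so the two comparison maps coincide.
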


\begin{proof}
Starting from a cofibrant replacement $C_{\bu} \to A$ in $\Alg_R$ and going right in \eqref{DiagHAalg}, one obtains $\LL_A \to I_A$, which is a weak equivalence because of the flatness assumption on $A$. Then going down yields $A \ot_{A \ot_R A^{\opp}} \LL_A$, whose $\pi_*$ is $\Tor_*^{A \ot_R A^{\opp}}(A,I_A)$. Again by the flatness assumption, Hochschild homology $\HH_*(A)$ is not just a relative $\Tor$ but the (absolute) $\Tor_*^{A \ot_R A^{\opp}}(A,A)$. The short exact sequence of bimodules $0 \to I_A \to A \ot_R A^{\opp} \to A \to 0$ gives a natural isomorphism $\Tor_{i+1}^{A \ot_R A^{\opp}}(A,A) \cong \Tor_i^{A \ot_R A^{\opp}}(A,I_A)$ for all $i \geq 1$.

On the other hand, going down in the diagram yields $Com(C_{\bu}) \to A$ and then going right yields $A \ot_{Com(C_{\bu})} \Om_{Com(C_{\bu})/R}$. The comparison map is $\pi_*$ of \eqref{eq:ComparCotCpxD}, which measures the failure of $Com \colon \Alg_R \to \Com_R$ to preserve weak equivalences.
\end{proof}

\begin{remark}
The comparison map in \ref{HHHQ} is an edge morphism in a spectral sequence of Quillen \cite{Quillen70}*{Theorem 8.1}.
\end{remark}

\subsection{Truncated \texorpdfstring{$\Pi$}{Pi}-algebras} \label{sec:TrunPiAlg}

A $\Pi$-algebra is the algebraic structure describing the action of the primary homotopy operations on the homotopy groups of a pointed space $X$. More details can be found in \cite{Blanc04}*{\S 4} \cite{Stover90}*{\S 4}; we recall the essentials.

Let $\PI$ denote the homotopy category of finite (possibly empty) wedges of spheres $\We S^{n_i}$, with $n_i \geq 1$. The category $\PI$ has finite coproducts, given by the wedge, and thus its opposite $\PI^{\opp}$ is an algebraic theory as in Definition \ref{def:AlgTh}.

\begin{definition} \label{def:PiAlg}
A \Def{$\PI$-algebra} is a contravariant functor $A \colon \PI \to \Set$ that sends wedges to products, i.e., a product-preserving functor $\PI^{\opp} \to \Set$ (or equivalently to pointed sets). In other words, a $\PI$-algebra is a model for the algebraic theory $\PI^{\opp}$.

Let $\PiAlg \dfn \Model(\PI^{\opp})$ denote the category of $\Pi$-algebras.
\end{definition}

The prototypical example is the functor $[-,X]$ of pointed homotopy classes of pointed maps into $X$, called the homotopy $\Pi$-algebra of the pointed space $X$. A $\Pi$-algebra $A$ can be viewed as a graded group $\{\pi_i = A(S^i)\}$ (abelian for $i \geq 2$) equipped with primary homotopy operations induced by maps between wedges of spheres, such as precomposition operations $\al^* \colon \pi_k \to \pi_n$ for every $\al \in \pi_n(S^k)$. The additional structure is determined by operations of that form, Whitehead products, and the $\pi_1$-action on higher $\pi_i$, and there are classical relations between them.

\renewcommand\thesubsubsection{\thesubsection.\arabic{subsubsection}}
\subsubsection{Postnikov truncation of \texorpdfstring{$\Pi$}{Pi}-algebras}

\begin{definition}
A $\Pi$-algebra $A$ is called \Def{$n$-truncated} if for all $i > n$, we have $A(S^i) = \ast$, the trivial pointed set.
\end{definition}

Denote by $\PiAlg_1^n$ the full subcategory of $\PiAlg$ consisting of $n$-truncated $\Pi$-algebras. Denote by $\PI_n$ the full subcategory of $\PI$ consisting of wedges of spheres of dimension at most $n$, and let $I_n \colon \PI_n \to \PI$ be the inclusion functor. One can go the other way, by removing spheres above a certain dimension. To make this precise, assume without loss of generality that the category $\PI$ is skeletal, i.e., each isomorphism class contains exactly one object. In other words, we choose a representative space for each homotopy type $\We S^{n_i}$ in $\PI$. Define a ``truncation'' functor $T_n \colon \PI \to \PI_n$ by $T_n \left( \We_{i=1}^k S^{n_i} \right) = \We_{n_i \leq n} S^{n_i}$. This functor sends a map $f \colon \We_i S^{n_i} \to \We_j S^{m_j}$ to the homotopy lift:
\[
\xymatrix{
\We_{n_i \leq n} S^{n_i} \ar@{-->}[drr]_{T_n f} \ar@{^{(}->}[r] & \We_i S^{n_i} \ar[r]^f & \We_j S^{m_j} \\
& & \We_{m_j \leq n} S^{m_j} \ar@{^{(}->}[u] \\
}
\]
which exists and is unique, since $\We_{m_j \leq n} S^{m_j} \inj \We_j S^{m_j}$ is an isomorphism on $\pi_k$ for $k \leq n$. By the same argument, $I_n$ is left adjoint to $T_n$. The unit $1 \to T_n I_n$ is the identity, and the counit $I_n T_n \to 1$ is the inclusion of wedge summands of small dimension.

Both $I_n \colon \PI_n \to \PI$ and $T_n \colon \PI \to \PI_n$ preserve coproducts (wedges), and thus induce restriction functors on models $I_n^* \colon \Model(\PI^{\opp}) \to \Model(\PI_n^{\opp})$ and $T_n^* \colon \Model(\PI_n^{\opp}) \to \Model(\PI^{\opp})$, where $T_n^*$ lands in the subcategory $\PiAlg_1^n$.

\begin{proposition}
The functors $I_n^* \colon \PiAlg_1^n \cong \Model(\PI_n^{\opp}) \colon T_n^*$ form an equivalence of categories.
\end{proposition}

\begin{proof}
If $F$ is a product-preserving functor $\PI_n^{\opp} \to \Set$, then we have $I_n^* T_n^* F = (T_n I_n)^* F = F$, since $T_n I_n$ is the identity. On the other hand, if $A$ is an $n$-truncated $\Pi$-algebra, we have $T_n^* I_n^* A = (I_n T_n)^*A \cong A$. Indeed, $A$ sends all counit maps:
\[
I_n T_n(\We_i S^{n_i}) = \We_{n_i \leq n} S^{n_i} \inj \We_i S^{n_i}
\]
to isomorphisms since $A$ is $n$-truncated.
\end{proof}

\begin{example}
$1$-truncated $\Pi$-algebras are the same as groups. In the category $\PI_1$, the hom-set $\left[ S^1, \We_{i=1}^k S^1 \right] = \pi_1 \left( \We_{i=1}^k S^1 \right)$ is a free group on $k$ generators. This yields an equivalence of algebraic theories $\PI_1^{\opp} \cong \cat{T}_{\Gp}$ and thus an equivalence of models $\PiAlg_1^1 \cong \Gp$.
\end{example}


Write $P_n \colon \PiAlg \to \PiAlg_1^n$ for $I_n^*$, which is the Postnikov $n$-truncation of $\Pi$-algebras, and $\io_n \colon \PiAlg_1^n \to \PiAlg$ for $T_n^*$, which is the inclusion of $n$-truncated $\Pi$-algebras.

\begin{proposition}
$P_n$ is left adjoint to $\io_n$.
\end{proposition}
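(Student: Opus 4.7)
The plan is to derive this adjunction as a formal consequence of the preceding adjunction $I_n \dashv T_n$ between $\PI_n$ and $\PI$. The key categorical fact I would invoke is that precomposition reverses adjunctions: if $F \colon \mathcal{A} \to \mathcal{B}$ is left adjoint to $G \colon \mathcal{B} \to \mathcal{A}$, then for any category $\mathcal{E}$, the restriction functor $G^* = (- \circ G) \colon \Fun(\mathcal{A}, \mathcal{E}) \to \Fun(\mathcal{B}, \mathcal{E})$ is left adjoint to $F^* = (- \circ F) \colon \Fun(\mathcal{B}, \mathcal{E}) \to \Fun(\mathcal{A}, \mathcal{E})$, with unit and counit obtained from those of $F \dashv G$ by whiskering. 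This is standard and can be verified directly from the adjunction formulas $\Nat(KG, H) \cong \Nat(K, HF)$ via $K \xrightarrow{K\eta} KGF \xrightarrow{\alpha F} HF$ in one direction and $KG \xrightarrow{\beta G} HFG \xrightarrow{H\epsilon} H$ in the other.

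Next, I would dualize to get $T_n^{\opp} \dashv I_n^{\opp}$ between $\PI^{\opp}$ and $\PI_n^{\opp}$, and then apply the general fact above with $F = T_n^{\opp}$, $G = I_n^{\opp}$, and $\mathcal{E} = \Set$. This yields an adjunction $(I_n^{\opp})^* \dashv (T_n^{\opp})^*$ between the functor categories $\Fun(\PI^{\opp}, \Set)$ and $\Fun(\PI_n^{\opp}, \Set)$. Unwinding definitions, $(I_n^{\opp})^* = I_n^* = P_n$ and $(T_n^{\opp})^* = T_n^* = \io_n$ on the nose, so we obtain $P_n \dashv \io_n$ at the level of all set-valued functors.

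Finally, I would check that the adjunction restricts to the full subcategories of product-preserving functors. Since $I_n$ and $T_n$ both preserve wedges (coproducts in $\PI$), their opposites preserve products, hence restriction along them sends product-preserving functors to product-preserving functors; in particular $P_n$ and $\io_n$ indeed take $\Pi$-algebras (resp.\ $n$-truncated $\Pi$-algebras) to $n$-truncated $\Pi$-algebras (resp.\ $\Pi$-algebras). Since $\PiAlg$ and $\PiAlg_1^n$ are full subcategories of the ambient functor categories, the adjunction bijection descends with no further work. As a sanity check, the counit $\epsilon_B \colon P_n \io_n B \to B$ comes out as the identity (because $T_n I_n = \id_{\PI_n}$), so this exhibits $\PiAlg_1^n$ as a reflective subcategory of $\PiAlg$; the unit $\eta_A \colon A \to \io_n P_n A$ is, in each degree $\We_i S^{n_i}$, the map $A(\We_i S^{n_i}) \to A(\We_{n_i \leq n} S^{n_i})$ induced by the inclusion of low-dimensional wedge summands.

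There is essentially no substantive obstacle here — the proof is purely formal. The only care required is bookkeeping of the opposite categories (since $\Pi$-algebras are contravariant functors, the adjunction $I_n \dashv T_n$ gets reversed to $T_n^{\opp} \dashv I_n^{\opp}$ before the precomposition principle is applied) and a brief verification that product-preservation is preserved by both restrictions.
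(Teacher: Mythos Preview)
Your argument is correct and matches the paper's first proof (the ``functor point of view'') essentially verbatim: the paper packages the same reasoning by saying that $\Fun^{\x}(-,\Set)$ is a 2-functor $\Cat^{\opp} \to \Cat$ which reverses 1-cells but not 2-cells, hence carries the adjunction $T_n^{\opp} \dashv I_n^{\opp}$ to $P_n \dashv \io_n$. The paper also records a second, more hands-on proof (``graded group point of view'') that directly identifies maps $A \to \io_n B$ with maps $P_n A \to B$, but your formal argument is the one the paper gives first.
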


\begin{proof}
\textit{(Functor point of view)} $I_n \colon \PI_n \to \PI$ is the left adjoint, and thus $I_n \colon \PI_n^{\opp} \to \PI^{\opp}$ is the right adjoint. Note that $\Fun(-,\Set)$ is a (strict) $2$-functor $\Cat^{\opp} \to \Cat$, where the superscript in $\Cat^{\opp}$ means that $1$-cells have been reversed but $2$-cells do not change. The same holds for $\Fun^{\x}(-,\Set)$, as long as we take only categories and product-preserving functors between them. Therefore $P_n = I_n^*$ is left adjoint to $\io_n = T_n^*$.
\end{proof}

\begin{proof}
\textit{(Graded group point of view)} A map $f \colon A \to \io_n B$ of $\Pi$-algebras into an $n$-truncated $\Pi$-algebra is determined by the map of graded group up to degree $n$. The additional conditions are that $f$ respect the additional structure ($\pi_1$-action, Whitehead products, and precomposition operations). These operations preserve or increase degree, which means that all the conditions coming from or landing in degree greater than $n$ are vacuous. In other words, the data of a map $f$ is the same data as the corresponding map $P_n A \to B$ in $\PiAlg_1^n$.
\end{proof}

Both $\PiAlg$ and $\PiAlg_1^n$ are categories of universal algebras -- many-sorted finitary varieties, to be more precise. The free $\Pi$-algebra on a graded set $\{X_i\}$ is $F \{X_i\} = \pi_* ( \We_i \We_{j \in X_i} S^i )$.  By combining the two adjunctions:
\[ 
\xymatrix{
\GrSet \ar@<0.6ex>[r]^F & \PiAlg \ar@<0.6ex>[l]^U \ar@<0.6ex>[r]^{P_n} & \PiAlg_1^n \ar@<0.6ex>[l]^{\io_n} \\
}
\]
we see that the free $n$-truncated $\Pi$-algebra on $\{X_i\}$ is:
\[
F_n \{X_i\} = P_n \pi_* ( \We_i \We_{j \in X_i} S^i ) = \pi_* ( P_n \We_i \We_{j \in X_i} S^i ).
\]
In both categories, projective objects are retracts of free objects and regular epimorphisms are surjections of underlying graded sets \cite{Quillen67}*{II.4, Remark 1 after Proposition 1}. In particular, the left adjoint $P_n$ preserves projectives and prolongs to a left Quillen functor. Note that $\left\{ \pi_*(P_n S^1), \pi_*(P_n S^2), \ldots, \pi_*(P_n S^n) \right\}$ is a set of finitely presentable projective generators for $\PiAlg_1^n$.

\subsubsection{Standard model structure}

The standard model structure on the category $s\PiAlg$ of simplicial $\Pi$-algebras is described in \cite{Blanc04}*{\S 4.5} and the same description holds for $s\PiAlg_1^n$. A map $f \colon X_{\bu} \to Y_{\bu}$ is a fibration (resp. weak equivalence) if it is so at the level of underlying graded sets or graded groups. Cofibrations are maps with the left lifting property with respect to trivial fibrations and can be characterized as retracts of free maps.

\begin{proposition} \label{PnFib}
The left Quillen functor $P_n \colon s\PiAlg \to s\PiAlg_1^n$ preserves weak equivalences and fibrations. In particular, it preserves cofibrant replacements.
\end{proposition}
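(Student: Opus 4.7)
The plan is to invoke the characterization of fibrations and weak equivalences via the projective generators, and to exploit the fact that Postnikov truncation is essentially restriction to spheres of dimension at most $n$, so it does not disturb the relevant $\Hom$-sets.

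First, I would recall from \ref{ProlongQuilPair} and the description of the standard model structure on $s\PiAlg_1^n$ that a map $g_{\bu} \colon U_{\bu} \to V_{\bu}$ in $s\PiAlg_1^n$ is a fibration (resp.\ weak equivalence) iff $\Hom_{\PiAlg_1^n}(Q, g_{\bu})$ is a fibration (resp.\ weak equivalence) of simplicial sets for each member $Q$ of the generating set $\{\pi_*(P_n S^i) \mid 1 \leq i \leq n\}$. So, given $f_{\bu} \colon X_{\bu} \to Y_{\bu}$ a fibration (resp.\ weak eq) in $s\PiAlg$, it suffices to test $P_n f_{\bu}$ against each such $Q$.

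Next, I would use the adjunction $P_n \dashv \io_n$ to rewrite
\[
\Hom_{\PiAlg_1^n}(P_n \pi_* S^i,\, P_n X_{\bu}) \;\cong\; \Hom_{\PiAlg}(\pi_* S^i,\, \io_n P_n X_{\bu}),
\]
and similarly for $Y_{\bu}$. The key observation is that $\io_n P_n$ is the identity at the level of $S^i$ for $i \leq n$: unwinding the constructions, $\io_n P_n A$ agrees with $A$ on every sphere of dimension at most $n$ and is trivial above, so the unit $\eta \colon A \to \io_n P_n A$ is an iso on $S^i$ for $i \leq n$. In particular, evaluating at the simplicial level,
\[
\Hom_{\PiAlg}(\pi_* S^i,\, \io_n P_n X_{\bu}) \;=\; (\io_n P_n X_{\bu})(S^i) \;\cong\; X_{\bu}(S^i) \;=\; \Hom_{\PiAlg}(\pi_* S^i,\, X_{\bu})
\]
naturally in $X_{\bu}$, and likewise for $Y_{\bu}$. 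Under these identifications $\Hom_{\PiAlg_1^n}(P_n \pi_* S^i, P_n f_{\bu})$ becomes $\Hom_{\PiAlg}(\pi_* S^i, f_{\bu})$, which by hypothesis is a fibration (resp.\ weak eq) of simplicial sets for every $i \geq 1$, hence in particular for $1 \leq i \leq n$. Thus $P_n f_{\bu}$ is a fibration (resp.\ weak equivalence) in $s\PiAlg_1^n$.

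For the final sentence, $P_n$ is already a left Quillen functor, hence preserves cofibrant objects and cofibrations; combined with the preservation of weak equivalences just shown, it sends any cofibrant replacement $C_{\bu} \ral{\sim} A$ in $s\PiAlg$ to a cofibrant replacement $P_n C_{\bu} \ral{\sim} P_n A$ in $s\PiAlg_1^n$. The only delicate point in this plan is the natural identification $(\io_n P_n X_{\bu})(S^i) \cong X_{\bu}(S^i)$ for $i \leq n$, but this is immediate from the fact that $I_n T_n \to 1$ is an iso on $\pi_k$ for $k \leq n$ combined with how $\io_n = T_n^*$ and $P_n = I_n^*$ act on the functor categories.
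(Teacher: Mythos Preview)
Your proposal is correct and follows essentially the same route as the paper's first proof (the \emph{functor point of view}): both reduce to checking that $\Hom$ from the projective generators of $\PiAlg_1^n$ into $P_n f_{\bu}$ agrees with $\Hom$ from suitable projectives into $f_{\bu}$ itself, by identifying evaluation at $S^i$ for $i \leq n$ before and after truncation. The only cosmetic difference is that the paper passes through the free--forgetful adjunction to $\GrSet$ (writing $\Hom_{\PiAlg_1^n}(F_n(S), P_n X_{\bu}) = \Hom_{\GrSet}(S, U X_{\bu}) = \Hom_{\PiAlg}(F(S), X_{\bu})$), whereas you use the $P_n \dashv \io_n$ adjunction together with the observation that $\io_n P_n$ is the identity on $S^i$ for $i \leq n$; these are two packagings of the same computation. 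The paper also records a second, one-line proof from the \emph{graded group point of view}: since fibrations and weak equivalences are detected degreewise on underlying simplicial sets, and $P_n$ simply discards degrees above $n$, the conclusion is immediate.
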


\begin{proof}
\textit{(Functor point of view)} Let $f \colon X_{\bu} \to Y_{\bu}$ be a fibration (resp. weak equivalence) in $s\PiAlg$. Let $P$ be a projective of $\PiAlg_1^n$, exhibited as a retract of a free by $P \ral{s} F \ral{p} P$. Then $(P_n f)_* \colon \Hom(P, P_n X_{\bu}) \to \Hom(P, P_n Y_{\bu})$ is a retract of $\Hom(F,P_n f)$ so it suffices that the latter be a fibration (resp. weak equivalence) of simplicial sets.

Note that $F = F_n(S)$ is free on a graded set $S$ empty above dimension $n$, so we have:
\begin{align*}
 \Hom_{\PiAlg_1^n}(F, P_n X_{\bu}) &= \Hom_{\GrSet}(S, U P_n X_{\bu} ) \\
 &= \Hom_{\GrSet}(S, U X_{\bu} ) \\
 &= \Hom_{\PiAlg} (F(S), X_{\bu} ).
\end{align*}
Using this, we obtain:
\[
\xymatrix{
\Hom_{\PiAlg_1^n}(F, P_n X_{\bu}) \ar[d]_{\cong} \ar[r]^{(P_n f)_*} & \Hom_{\PiAlg_1^n}(F, P_n Y_{\bu}) \ar[d]^{\cong} \\
\Hom_{\PiAlg}(F(S), X_{\bu}) \ar[r]^{f_*} & \Hom_{\PiAlg}(F(S), Y_{\bu}). \\
}
\]
Since $f$ is a fibration (resp. weak equivalence) in $s\PiAlg$, the bottom row is a fibration (resp. weak equivalence) of simplicial sets.
\end{proof}

\begin{proof}
\textit{(Graded group point of view)} The map $f \colon X_{\bu} \to Y_{\bu}$ is a fibration (resp. weak equivalence) of simplicial sets in each degree, hence the map $P_n f$ is a fibration (resp. weak equivalence) of simplicial sets in each degree, that is in degrees $1$ through $n$.
\end{proof}

\begin{corollary} \label{PnCotCpx}
\begin{enumerate}
\item For every $\Pi$-algebra $A$, the comparison map of cotangent complexes $P_n (\LL_A) \ral{\sim} \LL_{P_n A}$ induced by the adjunction $P_n \dashv \io_n$ is a weak equivalence.
\item If $N$ is a module over $P_n A$, then the comparison map in Quillen cohomology:
\begin{equation} \label{HQtrunc}
\HQ^*_{\PiAlg_1^n}(P_n A; N) \ral{\cong} \HQ^*_{\PiAlg}(A; \eta_A^* \io_n N)
\end{equation}
is a natural isomorphism.
\end{enumerate}
\end{corollary}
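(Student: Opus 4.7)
The plan is to reduce both assertions to the general comparison results of Section \ref{Effect}, specifically Propositions \ref{EffectQuilHomolC}(4) and \ref{EffectHQc}(2). First I would verify the hypotheses of Theorem \ref{ComparDiag} for the adjunction $P_n \colon \PiAlg \rla \PiAlg_1^n \colon \io_n$. Both $\PiAlg$ and $\PiAlg_1^n$ are algebraic categories, as established above. The right adjoint $\io_n$ preserves regular epis, since regular epis in either category are characterized as surjections on underlying graded sets and $\io_n$ is the identity in degrees $\leq n$; equivalently, $P_n$ preserves projectives, as already noted. Finally, $P_n$ passes to Beck modules because it preserves finite limits: on underlying graded groups it is the identity in degrees $\leq n$ and constant at the terminal object in degrees $> n$, so in particular it preserves the kernel pairs of split epis required by Proposition \ref{AdjBeckMod}.

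With the setup in place, the essential input is Proposition \ref{PnFib}: the prolonged left adjoint $P_n \colon s\PiAlg \to s\PiAlg_1^n$ preserves \emph{all} weak equivalences, not merely those between cofibrant objects. Proposition \ref{EffectQuilHomolC}(4) then applies directly and produces a weak equivalence $P_n(\LL_A) \ral{\sim} \LL_{P_n A}$, which is assertion 1.

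For assertion 2, I would invoke Proposition \ref{EffectHQc}(2): whenever the comparison of cotangent complexes is a weak equivalence, the induced comparison map (\ref{ComparHQc}) in Quillen cohomology is an isomorphism. Applying this with $c = A$, $L = P_n$, $R = \io_n$, and unit $\eta_A \colon A \to \io_n P_n A$ gives the natural iso (\ref{HQtrunc}).

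The step that carries genuine content is Proposition \ref{PnFib}, and that has already been handled; everything else is bookkeeping: checking the hypotheses of Theorem \ref{ComparDiag}, invoking the two propositions, and tracing through the identifications. Naturality in $A$ and in $N$ is automatic from the functorial construction of the comparison diagram (\ref{DiagHQc}), so no additional verification is needed beyond pointing to the appropriate general results.
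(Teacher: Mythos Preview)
Your proposal is correct and follows essentially the same route as the paper: invoke Proposition~\ref{PnFib} to get that $P_n$ preserves all weak equivalences, then apply Proposition~\ref{EffectQuilHomolC}(4) for part~1 and Proposition~\ref{EffectHQc}(2) for part~2. The paper's proof is the one-line ``By \ref{EffectQuilHomolC}, \ref{EffectHQc}, and \ref{PnFib}''; you have simply made the hypothesis-checking for Theorem~\ref{ComparDiag} explicit, which is fine and in fact helpful.
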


\begin{proof}
By \ref{EffectQuilHomolC}, \ref{EffectHQc}, and \ref{PnFib}.
\end{proof}

Here $\eta_A \colon A \to \io_n P_n A$ is the Postnikov truncation map. We would like a better description of the module $\eta_A^* \io_n N$ in \eqref{HQtrunc}. Think of a module over $A$ as an abelian $\Pi$-algebra on which $A$ acts (cf. \cite{Blanc04}*{\S 4.11}), namely the kernel of the split extension as opposed to its ``total space''.

\begin{lemma}
The category $\Mod_{P_n A}$ of modules over $P_n A$ is equivalent to the full subcategory $\Mod_A^{n\text{-tr}}$ of $\Mod_A$ of modules that are $n$-truncated.
\end{lemma}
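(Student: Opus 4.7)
The plan is to pass the adjunction $P_n \dashv \io_n$ to Beck modules and exploit the fact that $\io_n$ is a fully faithful right adjoint whose essential image consists of the $n$-truncated $\Pi$-algebras.

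First, since $P_n = I_n^*$ and $\io_n = T_n^*$ are restriction functors along functors of small categories, both preserve all limits pointwise; in particular $P_n$ preserves kernel pairs of split epis, so Proposition \ref{AdjBeckMod}(1) yields an induced adjunction on Beck modules
\[
P_n \colon \Mod_A \rla \Mod_{P_n A} \colon \eta_A^* \io_n.
\]
Moreover the identity $T_n I_n = \id_{\PI_n}$ gives $P_n \io_n = \id_{\PiAlg_1^n}$ strictly, so the counit of $P_n \dashv \io_n$ is the identity and $\io_n$ is fully faithful.

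Second, I would check that $\eta_A^* \io_n$ factors through $\Mod_A^{n\text{-tr}}$. Viewing a Beck module as a split extension $A \lt M \to A$, pullback along a map modifies only the augmentation to the base and not the fiber, so the underlying abelian $\Pi$-algebra of $\eta_A^* \io_n N$ equals $\io_n N$, which is $n$-truncated.

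Third, I would confirm that $\eta_A^* \io_n \colon \Mod_{P_n A} \to \Mod_A^{n\text{-tr}}$ is an isomorphism of categories by showing both adjunction morphisms are isos on the appropriate (sub)categories. The counit $P_n \eta_A^* \io_n N \to N$ is the identity, inherited directly from $P_n \io_n = \id$, so $\eta_A^* \io_n$ is fully faithful. For $M \in \Mod_A^{n\text{-tr}}$, the unit $M \to \eta_A^* \io_n P_n M$ has as underlying map in $\PiAlg$ the unit of $P_n \dashv \io_n$, which is an iso precisely because the underlying abelian $\Pi$-algebra of $M$ is $n$-truncated; being a map in $\Mod_A$, this iso automatically respects the $A$-action, so $M$ lies in the essential image. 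The main (minor) obstacle is simply the bookkeeping to confirm that pullback, $P_n$, and $\io_n$ interact with Beck modules exactly as described, which is automatic once Proposition \ref{AdjBeckMod} is in hand.
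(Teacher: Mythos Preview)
Your proposal is correct and follows essentially the same route as the paper: both invoke Proposition~\ref{AdjBeckMod} to obtain the adjunction $P_n \dashv \eta_A^* \io_n$ on Beck modules, observe that $P_n \eta_A^* \io_n$ is the identity, note that $\eta_A^* \io_n$ lands in $\Mod_A^{n\text{-tr}}$, and then check that restricting $P_n$ to this subcategory makes the remaining composite the identity as well. Your write-up simply supplies more of the supporting justifications (why $P_n$ preserves the relevant limits, why the unit is an iso on $n$-truncated modules) that the paper leaves implicit.
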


\begin{proof}
Consider the adjunction on modules:
\[
\xymatrix{
\Mod_A \ar@<0.6ex>[r]^-{P_n} & \Mod_{P_n A} \ar@<0.6ex>[l]^-{\eta_A^* \io_n}
}
\]
from \ref{AdjBeckMod}. The composite $P_n \eta_A^* \io_n$ is naturally isomorphic to the identity. Moreover, $\eta_A^* \io_n$ lands in $\Mod_A^{n\text{-tr}}$. By restricting $P_n$ to the latter, we obtain an adjunction $\Mod_A^{n\text{-tr}} \rla \Mod_{P_n A}$ where both composites $P_n \eta_A^* \io_n$ and $\eta_A^* \io_n P_n$ are naturally isomorphic to the identity.
\end{proof}

The lemma justifies the abuse of notation in the following repackaged statement.

\begin{theorem}[Truncation isomorphism] \label{HQtrunc2}
Let $A$ be a $\Pi$-algebra and $N$ a module over $A$ that is $n$-truncated. Then there is a natural isomorphism:
\[
\HQ^*_{\PiAlg_1^n}(P_n A; N) \ral{\cong} \HQ^*_{\PiAlg}(A; N).
\]
\end{theorem}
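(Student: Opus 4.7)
The plan is to recognize that this theorem is essentially a repackaged version of Corollary \ref{PnCotCpx}(2), combined with the immediately preceding lemma identifying $\Mod_{P_n A}$ with the full subcategory $\Mod_A^{n\text{-tr}}$ of $n$-truncated modules over $A$.

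More concretely, I would proceed as follows. First, use the lemma to translate the hypothesis: an $n$-truncated module $N$ over $A$ determines a unique module $\tilde{N}$ over $P_n A$ such that $\eta_A^* \io_n \tilde{N} = N$; under the isomorphism of categories $\Mod_A^{n\text{-tr}} \cong \Mod_{P_n A}$, the modules $N$ and $\tilde{N}$ are the same object viewed through different lenses, which is precisely the ``abuse of notation'' alluded to just before the statement. Second, apply Corollary \ref{PnCotCpx}(2) to the module $\tilde{N}$ over $P_n A$, which yields the natural isomorphism
\[
\HQ^*_{\PiAlg_1^n}(P_n A; \tilde{N}) \ral{\cong} \HQ^*_{\PiAlg}(A; \eta_A^* \io_n \tilde{N}).
\]
Third, rewrite both sides using the identification: the left-hand side is $\HQ^*_{\PiAlg_1^n}(P_n A; N)$ and the right-hand side is $\HQ^*_{\PiAlg}(A; N)$, giving the stated isomorphism. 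Naturality in $N$ is inherited from the naturality of the comparison map in \ref{PnCotCpx}(2), together with the functoriality of the identification in the lemma.

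There is no real obstacle here, since the substantive work has already been carried out: \ref{PnFib} shows $P_n$ preserves weak equivalences (not merely between cofibrant objects), which via \ref{EffectHQc}(2) forces the comparison map on Quillen cohomology to be an iso, and the lemma takes care of matching up the coefficient categories. The only thing to be careful about is making the identification of modules explicit enough that the restatement is unambiguous; after that, the theorem is a formal consequence of the results collected above.
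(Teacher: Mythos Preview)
Your proposal is correct and matches the paper's approach exactly: the paper presents Theorem~\ref{HQtrunc2} as a ``repackaged statement'' of Corollary~\ref{PnCotCpx}(2), with the preceding lemma justifying the abuse of notation that lets one write $N$ for both the $n$-truncated $A$-module and the corresponding $P_nA$-module. No separate proof is given in the paper beyond this remark, so your explicit unpacking is if anything more detailed than what appears there.
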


The following example is of interest in light of \cite{Blanc04}*{Theorems 1.3 and 9.6}.

\begin{example} \label{HQloops}
Let $A$ be an $n$-truncated $\Pi$-algebra. For $k$ a positive integer, the $k$-fold loops $\Om^k A$ form a module over $A$ (which is zero if $k \geq n$) and we are interested in the cohomology groups $\HQ^*(A; \Om^k A)$. Since $\Om^k A$ is $(n-k)$-truncated, Theorem \ref{HQtrunc2} gives a natural isomorphism $\HQ^*_{\PiAlg_1^{n-k}}(P_{n-k} A; \Om^k A) \cong \HQ^*_{\PiAlg}(A; \Om^k A)$.
\end{example}

\begin{bibdiv}
\begin{biblist}

\bib{Adamek04qua}{article}{
   author={Ad{\'a}mek, Ji{\v{r}}{\'{\i}}},
   title={On quasivarieties and varieties as categories},
   journal={Studia Logica},
   volume={78},
   date={2004},
   number={1-2},
   pages={7--33},
   issn={0039-3215},
   review={\MR{2108018 (2005h:08010)}},
   doi={10.1007/s11225-005-7033-6},
}

\bib{Adamek94}{book}{
   author={Ad{\'a}mek, Ji{\v{r}}{\'{\i}}},
   author={Rosick{\'y}, Ji{\v{r}}{\'{\i}}},
   title={Locally presentable and accessible categories},
   series={London Mathematical Society Lecture Note Series},
   volume={189},
   publisher={Cambridge University Press, Cambridge},
   date={1994},
   pages={xiv+316},
   isbn={0-521-42261-2},
   review={\MR{1294136 (95j:18001)}},
   doi={10.1017/CBO9780511600579},
}


\bib{Adamek11}{book}{
   author={Ad{\'a}mek, J.},
   author={Rosick{\'y}, J.},
   author={Vitale, E. M.},
   title={Algebraic theories},
   series={Cambridge Tracts in Mathematics},
   volume={184},
   note={A categorical introduction to general algebra;
   With a foreword by F. W. Lawvere},
   publisher={Cambridge University Press, Cambridge},
   date={2011},
   pages={xviii+249},
   isbn={978-0-521-11922-1},
   review={\MR{2757312 (2012f:18001)}},
}

\bib{Andre74}{book}{
   author={Andr{\'e}, Michel},
   title={Homologie des alg\`ebres commutatives},
   language={French},
   series = {Die Grundlehren der mathematischen Wissenschaften},
   volume = {206},
   publisher={Springer-Verlag, Berlin-New York},
   date={1974},
   pages={xv+341},
   review={\MR{0352220 (50 \#4707)}},
}

\bib{Barr02}{book}{
   author={Barr, Michael},
   title={Acyclic models},
   series={CRM Monograph Series},
   volume={17},
   publisher={American Mathematical Society, Providence, RI},
   date={2002},
   pages={xii+179},
   isbn={0-8218-2877-0},
   review={\MR{1909353 (2003k:18017)}},
}

\bib{Barr06}{article}{
   author={Barr, Michael},
   title={Preserving homology},
   journal={Theory Appl. Categ.},
   volume={16},
   date={2006},
   pages={No. 7, 132--143 (electronic)},
   issn={1201-561X},
   review={\MR{2210670 (2007b:18011)}},
}

\bib{Blanc04}{article}{
   author={Blanc, D.},
   author={Dwyer, W. G.},
   author={Goerss, P. G.},
   title={The realization space of a $\Pi$-algebra: a moduli problem in algebraic topology},
   journal={Topology},
   volume={43},
   date={2004},
   number={4},
   pages={857--892},
   issn={0040-9383},
   review={\MR{2061210 (2005b:55008)}},
   doi={10.1016/S0040-9383(03)00074-0},
}

\bib{Blanc92}{article}{
   author={Blanc, David},
   author={Stover, Christopher},
   title={A generalized Grothendieck spectral sequence},
   conference={
      title={Adams Memorial Symposium on Algebraic Topology, 1 (Manchester, 1990)},
   },
   book={
      series={London Math. Soc. Lecture Note Ser.},
      volume={175},
      publisher={Cambridge Univ. Press, Cambridge},
   },
   date={1992},
   pages={145--161},
   review={\MR{1170576 (93i:18013)}},
   doi={10.1017/CBO9780511526305.011},
}

\bib{Borceux94v1}{book}{
   author={Borceux, Francis},
   title={Handbook of categorical algebra. 1},
   series={Encyclopedia of Mathematics and its Applications},
   volume={50},
   note={Basic category theory},
   publisher={Cambridge University Press, Cambridge},
   date={1994},
   pages={xvi+345},
   isbn={0-521-44178-1},
   review={\MR{1291599 (96g:18001a)}},
}

\bib{Borceux94v2}{book}{
   author={Borceux, Francis},
   title={Handbook of categorical algebra. 2},
   series={Encyclopedia of Mathematics and its Applications},
   volume={51},
   note={Categories and structures},
   publisher={Cambridge University Press, Cambridge},
   date={1994},
   pages={xviii+443},
   isbn={0-521-44179-X},
   review={\MR{1313497 (96g:18001b)}},
}

\bib{Christensen02}{article}{
   author={Christensen, J. Daniel},
   author={Hovey, Mark},
   title={Quillen model structures for relative homological algebra},
   journal={Math. Proc. Cambridge Philos. Soc.},
   volume={133},
   date={2002},
   number={2},
   pages={261--293},
   issn={0305-0041},
   review={\MR{1912401 (2003f:18012)}},
   doi={10.1017/S0305004102006126},
}

\bib{Dwyer80}{article}{
   author={Dwyer, W. G.},
   title={Homotopy operations for simplicial commutative algebras},
   journal={Trans. Amer. Math. Soc.},
   volume={260},
   date={1980},
   number={2},
   pages={421--435},
   issn={0002-9947},
   review={\MR{574789 (81e:55027)}},
   doi={10.2307/1998012},
}


\bib{Goerss90}{article}{
   author={Goerss, Paul G.},
   title={On the Andr\'e-Quillen cohomology of commutative ${\bf F}_2$-algebras},
   journal={Ast\'erisque},
   number={186},
   date={1990},
   pages={169},
   issn={0303-1179},
   review={\MR{1089001 (92b:18012)}},
}

\bib{Goerss99}{book}{
   author={Goerss, Paul G.},
   author={Jardine, John F.},
   title={Simplicial homotopy theory},
   series={Modern Birkh\"auser Classics},
   note={Reprint of the 1999 edition [MR1711612]},
   publisher={Birkh\"auser Verlag, Basel},
   date={2009},
   pages={xvi+510},
   isbn={978-3-0346-0188-7},
   review={\MR{2840650}},
   doi={10.1007/978-3-0346-0189-4},
}

\bib{Goerss95}{article}{
   author={Goerss, Paul G.},
   author={Lada, Thomas J.},
   title={Relations among homotopy operations for simplicial commutative algebras},
   journal={Proc. Amer. Math. Soc.},
   volume={123},
   date={1995},
   number={9},
   pages={2637--2641},
   issn={0002-9939},
   review={\MR{1260166 (95k:18007)}},
   doi={10.2307/2160555},
}

\bib{Goerss07}{article}{
   author={Goerss, Paul},
   author={Schemmerhorn, Kristen},
   title={Model categories and simplicial methods},
   conference={
      title={Interactions between homotopy theory and algebra},
   },
   book={
      series={Contemp. Math.},
      volume={436},
      publisher={Amer. Math. Soc., Providence, RI},
   },
   date={2007},
   pages={3--49},
   review={\MR{2355769 (2009a:18010)}},
   doi={10.1090/conm/436/08403},
}

\bib{Hirschhorn03}{book}{
   author={Hirschhorn, Philip S.},
   title={Model categories and their localizations},
   series={Mathematical Surveys and Monographs},
   volume={99},
   publisher={American Mathematical Society, Providence, RI},
   date={2003},
   pages={xvi+457},
   isbn={0-8218-3279-4},
   review={\MR{1944041 (2003j:18018)}},
}

\bib{Hovey99}{book}{
   author={Hovey, Mark},
   title={Model categories},
   series={Mathematical Surveys and Monographs},
   volume={63},
   publisher={American Mathematical Society, Providence, RI},
   date={1999},
   pages={xii+209},
   isbn={0-8218-1359-5},
   review={\MR{1650134 (99h:55031)}},
}




\bib{Pedicchio04}{article}{
   author={Pedicchio, Maria Cristina},
   author={Rovatti, Fabrizio},
   title={Algebraic categories},
   conference={
      title={Categorical foundations},
   },
   book={
      series={Encyclopedia Math. Appl.},
      volume={97},
      publisher={Cambridge Univ. Press, Cambridge},
   },
   date={2004},
   pages={269--309},
   review={\MR{2056585}},
}

\bib{Quillen67}{book}{
   author={Quillen, Daniel G.},
   title={Homotopical algebra},
   series={Lecture Notes in Mathematics, No. 43},
   publisher={Springer-Verlag, Berlin-New York},
   date={1967},
   pages={iv+156 pp. (not consecutively paged)},
   review={\MR{0223432 (36 \#6480)}},
}

\bib{Quillen70}{article}{
   author={Quillen, Daniel},
   title={On the (co-) homology of commutative rings},
   conference={
      title={Applications of Categorical Algebra},
      address={Proc. Sympos. Pure Math., Vol. XVII, New York},
      date={1968},
   },
   book={
      publisher={Amer. Math. Soc., Providence, R.I.},
   },
   date={1970},
   pages={65--87},
   review={\MR{0257068 (41 \#1722)}},
}

\bib{Stover90}{article}{
   author={Stover, Christopher R.},
   title={A van Kampen spectral sequence for higher homotopy groups},
   journal={Topology},
   volume={29},
   date={1990},
   number={1},
   pages={9--26},
   issn={0040-9383},
   review={\MR{1046622 (91h:55011)}},
   doi={10.1016/0040-9383(90)90022-C},
}

\bib{Weibel94}{book}{
   author={Weibel, Charles A.},
   title={An introduction to homological algebra},
   series={Cambridge Studies in Advanced Mathematics},
   volume={38},
   publisher={Cambridge University Press, Cambridge},
   date={1994},
   pages={xiv+450},
   isbn={0-521-43500-5},
   isbn={0-521-55987-1},
   review={\MR{1269324 (95f:18001)}},
   doi={10.1017/CBO9781139644136},
}

\end{biblist}
\end{bibdiv}

\end{document}